\documentclass[12pt]{amsart}
\usepackage{fullpage,url,amssymb,enumerate}
\usepackage[all]{xy}

\usepackage[OT2,T1]{fontenc}

\usepackage{color}

\newcommand{\C}{\mathbb{C}}
\newcommand{\F}{\mathbb{F}}
\newcommand{\Fbar}{{\overline{\F}}}
\newcommand{\PP}{\mathbb{P}}
\newcommand{\Q}{\mathbb{Q}}

\newcommand{\Z}{\mathbb{Z}}
\newcommand{\Qbar}{{\overline{\Q}}}
\newcommand{\Ebar}{{\overline{E}}}
\newcommand{\rhobar}{{\overline{\rho}}}
\newcommand{\vv}{v}
\newcommand{\eps}{\varepsilon}
\newcommand{\calD}{\mathcal{D}}
\newcommand{\calE}{\mathcal{E}}
\newcommand{\calK}{\mathcal{K}}
\newcommand{\calO}{\mathcal{O}}
\newcommand{\fp}{\mathfrak{p}}

\DeclareMathOperator{\Aut}{Aut}
\DeclareMathOperator{\can}{can}
\DeclareMathOperator{\Disc}{disc}
\DeclareMathOperator{\Dic}{Dic}
\DeclareMathOperator{\cond}{cond}
\DeclareMathOperator{\Frob}{Frob}
\DeclareMathOperator{\Gal}{Gal}
\DeclareMathOperator{\id}{id}
\DeclareMathOperator{\Ind}{Ind}
\DeclareMathOperator{\Jac}{Jac}
\DeclareMathOperator{\Sel}{Sel}
\DeclareMathOperator{\Spec}{Spec}
\DeclareMathOperator{\GL}{GL}
\DeclareMathOperator{\SL}{SL}

\newcommand{\ab}{{\operatorname{ab}}}
\newcommand{\ns}{{\operatorname{ns}}}
\newcommand{\spl}{{\operatorname{sp}}}
\newcommand{\unr}{{\operatorname{unr}}}
\newcommand{\tors}{{\operatorname{tors}}}

\newcommand{\To}{\longrightarrow}
\newcommand{\pws}[1]{[\![#1]\!]}
\newcommand{\lrs}[1]{(\!(#1)\!)}
\newcommand{\Magma}{{\sf Magma}}

\numberwithin{equation}{section}

\newtheorem{theorem}[equation]{Theorem}
\newtheorem{lemma}[equation]{Lemma}
\newtheorem{corollary}[equation]{Corollary}
\newtheorem{proposition}[equation]{Proposition}

\theoremstyle{definition}

\theoremstyle{remark}
\newtheorem{remark}[equation]{Remark}

\definecolor{darkgreen}{rgb}{0,0.5,0}
\usepackage[
        colorlinks, citecolor=darkgreen,
        backref,
        pdfauthor={Nuno Freitas, Bartosz Naskrecki, Michael Stoll},
        pdftitle={The generalized Fermat equation with exponents 2, 3, n},
]{hyperref}

\usepackage[alphabetic,backrefs,lite]{amsrefs} 

\usepackage{comment}

\setlength{\parindent}{0mm}
\setlength{\parskip}{1ex plus 0.5ex}

\begin{document}

\title[The generalized Fermat equation]%
      {The generalized Fermat equation with exponents 2, 3, $n$}

\subjclass[2010]{Primary 11D41; Secondary 11F80, 11G05, 11G07, 11G30, 14G05, 14G25}

\author{Nuno Freitas}
\address{Mathematics Institute,
         University of Warwick,
         Coventry CV4 7AL,
         United Kingdom}
\email{nunobfreitas@gmail.com}

\author{Bartosz Naskr\k{e}cki}
\address{Faculty of Mathematics and Computer Science,
         Adam Mickiewicz University in Pozna\'n,
         Uniwersytetu Pozna\'nskiego 4, 61-614 Pozna\'n, Poland}
\email{nasqret@gmail.com}

\author{Michael Stoll}
\address{Mathematisches Institut,
         Universit\"at Bayreuth,
         95440 Bayreuth, Germany.}
\email{Michael.Stoll@uni-bayreuth.de}
\urladdr{http://www.computeralgebra.uni-bayreuth.de}

\date{June 10, 2019}

\begin{abstract}
  We study the Generalized Fermat Equation $x^2 + y^3 = z^p$, to be solved
  in coprime integers, where $p \ge 7$ is prime. Using modularity and
  level lowering techniques, the problem can be reduced to the determination
  of the sets of rational points satisfying certain $2$-adic and $3$-adic
  conditions on a finite set of twists of the modular curve~$X(p)$.

  We first develop new local criteria to decide if two elliptic curves
  with certain types of potentially good reduction at 2 and 3 can have
  symplectically or anti-symplectically isomorphic $p$-torsion modules.
  Using these criteria we produce the minimal list of twists of~$X(p)$
  that have to be considered, based on local information at $2$ and~$3$; this list
  depends on $p \bmod 24$. Recent
  results on mod~$p$ representations with image in the normalizer of
  a split Cartan subgroup allow us to reduce the list further in some cases.

  Our second main result is the complete solution of the equation when
  $p = 11$, which previously was the smallest unresolved~$p$. One relevant new
  ingredient is the use of the `Selmer group Chabauty' method introduced
  by the third author in recent work, applied in an Elliptic Curve
  Chabauty context, to determine relevant points on~$X_0(11)$ defined
  over certain number fields of degree~$12$. This result is conditional
  on~GRH, which is needed to show correctness of the computation of
  the class groups of five specific number fields of degree~$36$.

  We also give some partial results for the case $p = 13$.
\end{abstract}

\maketitle


\section{Introduction}

This paper considers the Generalized Fermat Equation
\begin{equation} \label{E:GFE1}
  x^2 + y^3 = \pm z^n \,.
\end{equation}
Here $n \ge 2$ is an integer, and we are interested in
\emph{non-trivial primitive integral solutions},
where an \emph{integral solution} is a triple~$(a,b,c) \in \Z^3$
such that $a^2 + b^3 = \pm c^n$; such a solution is \emph{trivial}
if $abc = 0$ and \emph{primitive} if $a$, $b$ and~$c$ are coprime.
If $n$ is odd, the sign can be absorbed into the $n$th power, and there
is only one equation to consider, whereas for even~$n$, the two sign
choices lead to genuinely different equations.

It is known that for $n \le 5$ there are
infinitely many primitive integral solutions, which come in finitely
many families parameterized by binary forms evaluated at pairs of
coprime integers satisfying some congruence conditions, see for
example~\cite{Edwards} for details.
It is also known that for (fixed) $n \ge 6$ there are only finitely
many coprime integral solutions, see~\cite{DarmonGranville} for $n \ge 7$;
the case $n = 6$ reduces to two elliptic curves of rank zero.
Some non-trivial solutions are known for $n \ge 7$, namely (up to sign changes)
\begin{gather*}
  13^2 + 7^3 = 2^9, \quad
  71^2 + (-17)^3 = 2^7, \quad
  21063928^2 + (-76271)^3 = 17^7, \\
  2213459^2 + 1414^3 = 65^7, \quad
  15312283^2 + 9262^3 = 113^7, \\
  30042907^2 + (-96222)^3 = 43^8, \quad
  1549034^2 + (-15613)^3 = -33^8,
\end{gather*}
and for every~$n$, there is the `Catalan solution' $3^2 + (-2)^3 = 1^n$.
It appears likely (and is in fact a special case of the `Generalized Fermat
Conjecture') that these are the only non-trivial primitive integral
solutions for all $n \ge 6$. This has been verified for
$n = 7$~\cite{PSS2007}, $n = 8$~\cites{Bruin1999,Bruin2003},
$n = 9$~\cite{Bruin2005}, $n = 10$~\cites{Brown2012,Siksek2013}
and $n = 15$~\cite{SiksekStoll2014}. Since any integer $n \ge 6$
is divisible by $6$, $8$, $9$, $10$, $15$, $25$ or a prime $p \ge 7$,
it suffices to deal with $n = 25$ and with $n = p \ge 11$ a prime, given
these results.
The case $n = 25$ is considered in ongoing work by the authors of
this paper; the results will be described elsewhere. So we will
from now on assume that $n = p \ge 7$ (or $\ge 11$) is a prime number.

We note that an explicit version of the $abc$~conjecture with a sufficiently
good exponent would give an effective way of obtaining all solutions
to equation~\eqref{E:GFE1}. Namely, suppose that we know $\gamma > 0$
and $\eps < \frac{5}{61}$ such that for all coprime integers $A, B, C$
with $A + B = C$, we have that
\[ \max\{|A|, |B|, |C|\} \le \gamma \Bigl(\prod_{p \mid ABC} p\Bigr)^{1 + \eps} \,, \]
where the product is over the prime divisors of~$ABC$. Assume that $a^2 + b^3 = \pm c^n$
with coprime $a, b, c$ and set $M = \max\{|a|^2, |b|^3, |c|^n\}$.
We then obtain that
\[ M \le \gamma \Bigl(\prod_{p \mid a^2 b^3 c^n} p\Bigr)^{1 + \eps}
     = \gamma \Bigl(\prod_{p \mid a b c} p\Bigr)^{1 + \eps}
     \le \gamma |abc|^{1 + \eps}
     \le \gamma M^{(1/2 + 1/3 + 1/n)(1 + \eps)} \,,
\]
so $M^{1/6 - 1/n - (5/6 + 1/n)\eps} \le \gamma$. Since $\eps < \frac{5}{61}$,
the exponent on the left is positive as soon as $n \ge 11$, and we get an
effective bound on~$M$ in this case. Since equation~\eqref{E:GFE1}
has been solved completely for $n \le 10$, this then would give a
complete solution. Whether this would result in a practical approach
very much depends on the quality of the bound~$\gamma$ in relation to~$\eps$.
(This is well-known to the experts; see for
example~\cite{CohenBook}*{Prop.~14.6.5 and Exercise~2 on p.~493}.)

Our approach follows and refines the arguments of~\cite{PSS2007}
by combining new ideas around the modular method with recent
approaches to the determination of the set of rational points on curves.
We note that the existence of trivial solutions
with $c \neq 0$ and of the Catalan solutions prevents a successful
application of the modular method alone; see the discussion below.
Nevertheless, in the first part of this paper we will apply a refinement
of it to obtain optimal $2$-adic and $3$-adic information,
valid for an arbitrary prime exponent~$p$.
This information is then used as input for global methods in the second part when
tackling concrete exponents. We now give a more detailed description
of these two parts.

\subsection*{The modular method} \strut

The \emph{modular method} for solving Diophantine equations typically
proceeds in the following steps.
\begin{enumerate}[1.]
  \item To a putative solution associate a \emph{Frey elliptic curve}~$E$.
  \item Use modularity and level lowering results to show that (for a suitable
        prime~$p$) the Galois representation on the $p$-torsion~$E[p]$
        is isomorphic to the mod-$\fp$ representation associated to a newform~$f$
        of weight~$2$ and small level~$N$ (for a suitable prime ideal~$\fp$ above~$p$
        of the field of coefficients of~$f$),
        \begin{equation} \label{E:smallLevel}
          \rhobar_{E,p} \simeq \rhobar_{f,\fp} \,.
        \end{equation}
  \item For each of the possible newforms, show that they cannot occur,
        or find all possible associated solutions.
\end{enumerate}

The most challenging step is often the very last, where we want to obtain a
contradiction or list the corresponding solutions. In the proof of Fermat's
Last Theorem (where the modular method was born) we have $N=2$ and there are no
candidate newforms~$f$, giving a simple contradiction. In essentially every other
application of the method there are candidates for~$f$, therefore more work is
needed to complete the argument.
More precisely, we must show, for each newform~$f$ at the concrete small levels,
that $\overline{\rho}_{E,p} \not\simeq \rhobar_{f,\fp}$. This is for example the
obstruction to proving~FLT over~$\Q(\sqrt{5})$; see~\cite{FS3}.
Thus it is crucial to have methods for distinguishing Galois representations.

One such method is known as `the symplectic argument'; it originated in~\cite{HK2002}
and it uses a `symplectic criterion' (see Theorem~\ref{T:KHtrick})
to decide if an isomorphism of the $p$-torsion
of two elliptic curves having a common prime of (potentially) multiplicative reduction
preserves the Weil pairing. In practice, it sometimes succeeds in distinguishing
between the mod~$p$ Galois representations of elliptic curves having at least two
primes of potentially multiplicative reduction.
Extending the symplectic criteria to include elliptic curves with other types of
reduction will clearly allow to attack many more Diophantine equations.
The main challenge in doing this comes from the fact that, in the presence of
potentially good reduction, the inertia action either does not carry enough
information or is hard to describe explicitly.

In the first part of this paper, we prove new symplectic criteria
(Theorems \ref{T:maincrit2} and~\ref{T:maincrit3}) for certain cases of
potentially good reduction at $2$ and~$3$ and apply them to our concrete equation $x^2 + y^3 = z^p$.
Previously the only such criterion available was \cite{HK2002}*{Prop.~A.2}, which
contains a large list of hypotheses making it hard to apply.

\begin{remark}
  While completing this paper, the methods of this first part
  have been generalized in work of the first author~\cites{F33p,FK16};
  furthermore, our symplectic results together with their more general variants have
  already allowed for applications to further Fermat-type equations,
  including the equation $x^3 + y^3 = z^p$; see~\cites{F33p,BBF,FK}.
\end{remark}

Note that equation~\eqref{E:GFE1} admits, for all~$p$, the Catalan solution mentioned
above, but also the \emph{trivial solutions} $(\pm 1,1,0)$, $(\pm 1,0,1)$, $\pm (0,1,1)$.
We remark that the existence of these solutions
is a powerful obstruction to the success of the modular method alone.
Indeed, if by evaluating the Frey curve at these solutions we obtain a non-singular
curve, then we will find the modular form corresponding to it via modularity
among the forms with `small' level. This means that we do obtain a `real'
isomorphism in~\eqref{E:smallLevel}, which for arbitrary~$p$
we cannot discard with current methods (except under highly favorable
conditions). Nevertheless, we will apply our
refinement of the symplectic argument together with a careful
analysis of~\eqref{E:smallLevel} restricted to certain
decomposition groups to obtain finer local information,
valid for an arbitrary exponent~$p$.
This information is then used as input for global methods in
the second part when tackling concrete exponents.

More precisely, our goal is to reduce the study of equation~\eqref{E:GFE1}
to the problem of determining the sets of rational points (satisfying
some congruence conditions at $2$ and~$3$) on a small number of
twists of the modular curve~$X(p)$. For this, we apply our
new symplectic criteria to reduce the list of twists
that have to be considered (in the case of irreducible $p$-torsion on
the Frey elliptic curve, which always holds for $p \neq 7, 13$)
that was obtained in~\cite{PSS2007}.
We also make use of fairly recent results regarding
elliptic curves over~$\Q$ such that the image of the mod~$p$ Galois
representation is contained in the normalizer of a split Cartan
subgroup. Our results here are summarized in Table~\ref{Table-twist},
which says that, depending on the residue class of~$p$ mod~$24$,
there are between four and ten twists that have to be considered.

\subsection*{Rational points on curves} \strut

In the second part of the paper, our main goal is to give a proof of the following.

\begin{theorem}
  Assume the Generalized Riemann Hypothesis.
  Then the only non-trivial primitive integral solutions of the equation
  \begin{equation} \label{E:2311}
    x^2 + y^3 = z^{11}
  \end{equation}
  are the Catalan solutions $(a,b,c) = (\pm 3, -2, 1)$.
\end{theorem}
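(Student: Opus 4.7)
The plan is to follow the reduction framework established in the first part of the paper and then to attack the remaining Chabauty problem using the Selmer group Chabauty technique of the third author in an Elliptic Curve Chabauty setup. To a putative non-trivial primitive integral solution of~\eqref{E:2311} I would attach the standard Frey elliptic curve~$E$, apply modularity and level lowering, and feed in the symplectic criteria (Theorems~\ref{T:maincrit2} and~\ref{T:maincrit3}) together with the $2$- and $3$-adic analysis summarized in Table~\ref{Table-twist}. Since $11 \bmod 24 = 11$, the corresponding row of the table gives a short list of twists of~$X(11)$ such that every non-trivial primitive solution must yield a rational point on one of them, subject to explicit $2$- and $3$-adic congruence conditions.

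Before this reduction is valid one must verify that $\rhobar_{E,11}$ is irreducible. This amounts to a standard case analysis using Mazur's theorem on rational isogenies: the possibilities for a reducible $\rhobar_{E,11}$ are very constrained, and each surviving case can either be contradicted directly from the shape of the Frey curve or matched with a trivial or Catalan solution. Once this is done, only the irreducible twists from Table~\ref{Table-twist} need to be analyzed further.

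Direct Chabauty on $X(11)$ is out of reach because $X(11)$ has genus $26$. Instead I would exploit the natural covering $X(11) \to X_0(11)$: a rational point on a relevant twist of $X(11)$ produces a point on $X_0(11)$ defined over a specific number field~$K$ of degree~$12$ attached to the twist. Since $X_0(11)$ is an elliptic curve of rank zero over~$\Q$ with rational $5$-torsion, the Elliptic Curve Chabauty method is in principle available over~$K$, provided one has sufficient control over the Mordell--Weil group of $X_0(11)$ over~$K$.

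The hardest step is carrying out this Chabauty computation in degree~$12$. The classical form of the method requires explicit generators of the Mordell--Weil group, which is not realistic in degree~$12$. I would therefore replace it with Selmer group Chabauty, as introduced by the third author: this allows the Chabauty-style $p$-adic analysis to be carried out on the $p$-Selmer group over~$K$ without producing generators, at the price of heavy class group and unit group computations in several auxiliary number fields of degree~$36$. Rigorously certifying the class groups of the five relevant degree-$36$ fields unconditionally is infeasible in practice; GRH is invoked here to bound the norms of the ideals involved (e.g.\ via Bach's bounds) and thereby make these class group computations tractable. Once Selmer group Chabauty produces a finite list of candidate points on $X_0(11)(K)$, a routine sieve against the $2$- and $3$-adic twist conditions shows that the only surviving possibilities are the trivial solutions and the Catalan solution $(\pm 3,-2,1)$. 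The principal obstacle is therefore the Selmer group Chabauty computation itself, underpinned by the GRH-conditional class group data for these five degree-$36$ fields.
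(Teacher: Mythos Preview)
Your outline matches the paper's strategy for the five non-CM curves $54a1$, $96a1$, $864a1$, $864b1$, $864c1$: map each relevant twist of~$X(11)$ to $X_0(11)$ over the associated degree-$12$ field~$K_E$, compute the $2$-Selmer group over~$K_E$ (this is where the five degree-$36$ fields $K_E(\theta)$ and GRH enter), and run Selmer group Chabauty combined with detailed $2$-adic analysis. So the core of your plan is correct and essentially the paper's.

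There is, however, a genuine gap. For $p=11$ the row of Table~\ref{Table-twist} still contains the CM twists $X_{27a1}(11)$, $X_{288a1}(11)$ and $X^-_{288a1}(11)$; Lemma~\ref{L:CM} does not eliminate them because it requires $p\ge 17$ (and in any case $11$ is inert in both $\Z[\omega]$ and $\Z[i]$, so the relevant image lies in the normalizer of a \emph{non-split} Cartan). Your proposal jumps straight to ``five relevant degree-$36$ fields'', which is exactly the count for the non-CM curves, so the CM twists are silently dropped. The paper treats them by a completely different argument (Section~\ref{SS:CM11}): one passes to the twists $X^{(-1)}_\ns(11)$ and $X^{(-3)}_\ns(11)$ of the double cover $X_\ns(11)\to X^+_\ns(11)$, observes that the obvious Chabauty fails because every elliptic factor of their Jacobians has rank~$1$, and instead uses a covering collection plus Elliptic Curve Chabauty over the cubic field $\Q(\alpha)$ with $4\alpha^3-4\alpha^2-28\alpha+41=0$. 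That computation is what pins down the CM contributions to the trivial solutions with $a=0$ or $b=0$. Without some argument of this kind your reduction to the five $K_E$ is not justified.

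A smaller point: the endgame is not ``a routine sieve''. The paper needs Fisher's explicit models of $X_E(11)$ for $2$-adic input, a careful partition of the $2$-adically good $j$-invariants into the disks~$\calD$, and a delicate verification (Lemmas~\ref{L:const}--\ref{L:qconst} and the halving argument in the formal group) that on each surviving disk the only $K_E$-point with rational $j$ is the one coming from a known solution. Calling this routine undersells the actual content; you should at least flag that this step requires the explicit local analysis and the property~(*) verification, not merely a congruence sieve.
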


This appears to be only the second `hyperbolic' instance
(i.e., with $1/p + 1/q + 1/r < 1$)
of the Generalized Fermat Equation with pairwise distinct prime exponents
$p, q, r$ that could be solved completely.
(The first instance was $\{p,q,r\} = \{2,3,7\}$, which was solved
in~\cite{PSS2007}.)

We use several ingredients to obtain this
result. One is the work of Fisher~\cite{Fisher2014}, who obtained
an explicit description of the relevant twists of~$X(11)$
(which we determine in the first part). These curves have genus~$26$
and are therefore not amenable to any direct methods for determining
their rational points. We can (and do) still use Fisher's description
to obtain local information, in particular on the location in~$\Q_2$
of the possible $j$-invariants of the Frey curves. The second
ingredient is the observation that any rational point on one of the relevant twists of~$X(11)$
maps to a point on the elliptic curve~$X_0(11)$ that is defined over
a certain number field~$K$ of degree (at most)~$12$ that only depends
on~$E$ and such that the image of this point under the $j$-map is rational.
This is the setting of `Elliptic Curve Chabauty'~\cite{Bruin2003};
this approach was already taken in an earlier unsuccessful attempt
to solve equation~\eqref{E:2311}
by David Zureick-Brown. To carry this out in the usual way, one needs
to find generators of the group~$X_0(11)(K)$ (or at least of a subgroup
of finite index), which proved to be infeasible in some of the cases.
We get around this problem by invoking the third ingredient, which
is `Selmer Group Chabauty' as described in~\cite{Stoll2017b},
applied in the Elliptic Curve Chabauty setting. We note that we need
the Generalized Riemann Hypothesis (GRH) to ensure the correctness of the
class group computation for the number fields of degree~$36$ arising
by adjoining to~$K$ the $x$-coordinate of a point of order~$2$
on~$X_0(11)$. In principle, the class group can be verified
unconditionally by a finite computation, which, however, would take
too much time with the currently available implementations.
We would like to stress that future improvements of the methods
for computing class groups could result in removing the dependence
of our result from~GRH.

We also give some partial results for $p = 13$, showing that the
Frey curves cannot have reducible $13$-torsion and that the two
CM curves in the list of Lemma~\ref{L:7curves} below can only
give rise to trivial solutions.

We have used the \Magma\ computer algebra system~\cite{Magma} for the
necessary computations.

\subsection*{Acknowledgments} \strut

The work reported on in this paper was supported by the Deutsche
Forschungsgemeinschaft, grant Sto~299/11-1, in the framework of the Priority
Programme SPP~1489. The first author was also partly supported by the grant
{\it Proyecto RSME-FBBVA $2015$ Jos\'e Luis Rubio de Francia}.
We thank David Zureick-Brown for sharing his notes with~us
and Jean-Pierre Serre for pointing us to Ligozat's work
on the splitting of~$J(11)$. We also thank an anonymous referee
for some useful comments.

\subsection*{Notation} \strut

Let $K$ be a field of characteristic zero or a finite field. We write $G_K$ for
its absolute Galois group. If $E/K$ is an elliptic curve, we denote by
$\rhobar_{E,p}$ the Galois representation of~$G_K$ arising from the $p$-torsion
on~$E$. We write $N_E$ for the conductor of $E$ when $K$ is a $p$-adic field or a number field.
For a modular form $f$ and a prime $\fp$ in its field of coefficients we write
$\rhobar_{f,\fp}$ for its associated mod~$\fp$ Galois representation.


\section{Irreducibility and level lowering}
\label{sec:levellowering}

Suppose that $(a,b,c) \in \Z^3$ is a solution to the equation
\begin{equation} \label{E:GFE}
  x^2 + y^3 = z^p, \qquad \text{with $p \geq 7$ prime.}
\end{equation}
Recall that $(a,b,c)$ is \emph{trivial} if $abc=0$ and \emph{non-trivial}
otherwise. An integral solution is \emph{primitive} if $\gcd(a,b,c)=1$ and
\emph{non-primitive} otherwise. Note that equation~\eqref{E:GFE} admits for all~$p$
the trivial primitive solutions $(\pm 1,1,0)$, $(\pm 1,0,1)$, $\pm (0,1,1)$ and the
pair of non-trivial primitive solutions $(\pm 3,2,1)$, which we refer to
as the \textit{Catalan solution(s)}.

As in~\cite{PSS2007}, we can consider a putative solution $(a,b,c)$ of~\eqref{E:GFE}
and the associated Frey elliptic curve
\[ E_{(a,b,c)} \colon y^2 = x^3 + 3bx - 2a \qquad \text{of discriminant $\Delta = -12^3 c^p$.} \]
This curve has invariants
\begin{equation} \label{E:jInvariant}
  c_4 = -12^2 b, \quad c_6 = 12^3 a, \quad  j= \frac{12^3 b^3}{c^p}.
\end{equation}

We begin with a generalization and refinement of Lemma~6.1 in~\cite{PSS2007}.

\begin{lemma} \label{L:7curves}
  Let $p \geq 7$ and let $(a,b,c)$ be coprime integers satisfying $a^2 + b^3 = c^p$ and $c \neq 0$.
  Assume that the Galois representation on $E_{(a,b,c)}[p]$ is irreducible.
  Then there is a quadratic twist $E^{(d)}_{(a,b,c)}$ of~$E_{(a,b,c)}$
  with $d \in \{\pm 1, \pm 2, \pm 3, \pm 6\}$ such that $E^{(d)}_{(a,b,c)}[p]$
  is isomorphic to $E[p]$ as a $G_\Q$-module, where $E$ is one of the following
  seven elliptic curves (specified by their Cremona label):
  \[ 27a1,\; 54a1,\; 96a1,\; 288a1,\; 864a1,\; 864b1,\; 864c1 \,. \]
\end{lemma}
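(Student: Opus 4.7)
The plan is to apply the standard modular method to (a suitable twist of) the Frey curve, reducing the problem to an enumeration of newforms of small level.

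\emph{Step 1: Conductor analysis of $E = E_{(a,b,c)}$.} From $c_4 = -144 b$, $c_6 = 1728 a$ and $\Delta = -2^6\cdot 3^3\cdot c^p$, together with coprimality of $a, b, c$, one sees that $E$ has good reduction outside $2 \cdot 3 \cdot \prod_{\ell \mid c} \ell$. At any prime $\ell > 3$ dividing $c$, the reduction is multiplicative with $v_\ell(\Delta_{\min}) = p\cdot v_\ell(c)$, so in particular $p \mid v_\ell(\Delta_{\min})$. At $\ell \in \{2, 3\}$, the conductor exponent is determined by Tate's algorithm applied to the short Weierstrass equation; it depends only on $a$ and $b$ modulo small powers of $2$ and $3$.

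\emph{Step 2: Choose an optimal quadratic twist.} The twist $E^{(d)}$ by $d \in \Q^\times$ has equation $y^2 = x^3 + 3bd^2 x - 2ad^3$ and the same $j$-invariant as~$E$; only twists with $v_\ell(d)=0$ for all $\ell > 3$ can be useful, since other twists would introduce new bad primes into the conductor, and a case analysis on $(a, b)$ modulo suitable powers of $2$ and~$3$ shows that exactly one of the eight twists with $d \in \{\pm 1, \pm 2, \pm 3, \pm 6\}$ realises the minimal conductor exponent at both $2$ and~$3$ simultaneously. This analysis also yields the bounds $a_2 \le 5$ and $a_3 \le 3$, so that the conductor of $E^{(d)}$, after removing primes of multiplicative reduction, divides $2^5\cdot 3^3 = 864$.

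\emph{Step 3: Modularity and level lowering.} By Khare--Wintenberger, $E^{(d)}$ is modular. Since $\rhobar_{E^{(d)},p} \simeq \rhobar_{E,p}$ is irreducible, $p \ge 7$, and the representation is unramified at every prime $\ell > 3$ dividing~$c$ by Step~1, Ribet's level-lowering theorem produces a newform $f$ of weight~$2$ and trivial character of level $N_0 \mid 864$ and a prime $\fp \mid p$ in its coefficient field such that $\rhobar_{E^{(d)},p} \simeq \rhobar_{f,\fp}$.

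\emph{Step 4: Identify the newform.} One enumerates the newforms of weight~$2$ and trivial character at each divisor of~$864$. The rational newforms correspond, via Eichler--Shimura, to the seven isogeny classes $27a, 54a, 96a, 288a, 864a, 864b, 864c$ (readable from standard tables), and picking any representative of each class gives the list in the statement; the isomorphism class of the $p$-torsion module depends only on the isogeny class, so the choice of representative is harmless. Non-rational newforms at these levels must be eliminated: comparing $a_\ell(E^{(d)}) \equiv a_\ell(f) \pmod{\fp}$ at a few auxiliary primes $\ell$ of good reduction forces $\fp$ (and hence~$p$) to lie in an explicit finite set of small primes, each $< 7$. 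The two main obstacles in carrying this out are the somewhat tedious $2$-adic Tate-algorithm bookkeeping in Step~2, and, in Step~4, the elimination of non-rational newforms at the largest level $864$, where several such forms occur and must be discarded one by one by the standard congruence argument.
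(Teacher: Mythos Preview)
Your Steps~1--3 are essentially the paper's argument: twist to minimize the conductor at $2$ and~$3$, observe that the Artin conductor away from $2,3$ is killed by the condition $p \mid v_\ell(\Delta_{\min})$, and apply Ribet to land at a newform of level dividing $2^5 \cdot 3^3$. The paper records the slightly sharper fact that the exponent at~$2$ lies in $\{0,1,5\}$ and at~$3$ in $\{1,2,3\}$, but this only trims the list of levels to be searched.

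Step~4 is where you diverge from the paper, and it contains one genuine slip and one methodological difference worth flagging. First, the slip: the rational newforms at the relevant levels do \emph{not} correspond to exactly seven isogeny classes. There are, for instance, two classes at level~$54$, two at level~$96$, several at level~$288$, and many at level~$864$. The paper closes this gap with an extra reduction step you have omitted: one checks that every elliptic curve of conductor $27$, $54$, $96$, $288$ or~$864$ is isogenous, by an isogeny of degree prime to~$p$, to a quadratic twist by some $d \in \{\pm 1,\pm 2,\pm 3,\pm 6\}$ of one of the seven listed curves, and then absorbs that twist into the choice of~$d$ on the Frey side. This is a finite check, but it is not automatic and must be stated.

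Second, the elimination of the non-rational forms: the paper does \emph{not} use your proposed trace-comparison argument. Instead it computes, via the Loeffler--Weinstein algorithm, the restriction of $\rho_{f,\fp}$ to inertia at~$3$ for the level-$864$ forms with coefficient field $\Q(\sqrt{13})$, finds that $\rhobar_{f,\fp}(I_3) \simeq S_3$, and then invokes Kraus's result that for an elliptic curve the inertia image of order~$6$ is necessarily \emph{cyclic}. This rules these forms out uniformly for all $p \ge 7$ in one stroke. Your trace-comparison method is the more classical route and may well succeed, but the assertion that the resulting bound on~$p$ comes out below~$7$ is a computation you have not done, and the fact that $a_\ell(E^{(d)})$ ranges over all integers in $[-2\sqrt{\ell},2\sqrt{\ell}]$ (plus the $\pm(\ell+1)$ possibilities when $\ell \mid c$) means the combinatorics are not entirely trivial. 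The paper's local-inertia argument buys a clean, $p$-independent elimination at the cost of invoking a more specialized tool.
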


For the convenience of the reader, we give equations of these elliptic curves.
\begin{align*}
   27a1 &\colon y^2 + y = x^3 - 7                & 864a1 &\colon y^2 = x^3 - 3 x + 6 \\
   54a1 &\colon y^2 + x y = x^3 - x^2 + 12 x + 8 & 864b1 &\colon y^2 = x^3 - 24 x + 48 \\
   96a1 &\colon y^2 = x^3 + x^2 - 2 x            & 864c1 &\colon y^2 = x^3 + 24 x - 16 \\
  288a1 &\colon y^2 = x^3 + 3 x \\
\end{align*}

\begin{proof}
  By the proof of~\cite{PSS2007}*{Lemma~4.6}, a twist $E^{(d)}_{(a,b,c)}$
  with $d \in \{\pm 1, \pm 2, \pm 3, \pm 6\}$
  of the Frey curve has conductor dividing~$12^3 N'$, where $N'$ is the
  product of the primes $\ge 5$ dividing~$c$. In fact, carrying out
  Tate's algorithm for $E_{(a,b,c)}$ locally at $2$ and~$3$
  shows that the conductor can be taken to be
  $2^r 3^s N'$ with $r \in \{0,1,5\}$ and $s \in \{1,2,3\}$.
  (This uses the assumption that the solution is primitive.)
  Write~$E$ for this twist~$E^{(d)}_{(a,b,c)}$.

  Using level lowering as in the proof of~\cite{PSS2007}*{Lemma~6.1},
  we find that $\rhobar_{E,p} \simeq \rhobar_{E',p}$
  where $E'$ is an elliptic curve of conductor $27$, $54$, $96$, $288$ or~$864$,
  or else $\rhobar_{E,p} \simeq \rhobar_{f,\fp}$, where $f$ is a newform
  of level~$864$ with field of coefficients $\Q(\sqrt{13})$
  and $\fp \mid p$ in this field.
  Let $f$ be one of these newforms and write
  $\rho = \rho_{f,\fp}|_{D_3}$ for the restriction of the Galois
  representation attached to $f$ to a decomposition group at 3.
  We apply the Loeffler-Weinstein algorithm\footnote{This is implemented
  in \Magma\ via the commands {\tt pi:=LocalComponent(ModularSymbols(f), 3);}
  {\tt WeilRepresentation(pi)}.}
  \cites{LW2012,LW2015} to determine $\rho$ and we obtain $\rho(I_3) \simeq S_3$,
  where $I_3 \subset D_3$ is the inertia group.
  Since $p$ does not divide $6=\#S_3$, we also have that $\rhobar(I_3) \simeq S_3$.
  On the other hand, it is well-known that when $\rhobar_{E,p}(I_3)$ has order~6,
  it must be cyclic (see \cite{Kraus1990}*{page 354}). Thus we cannot have
  $\rhobar_{E,p} \simeq \rhobar_{f,\fp}$ for any of these newforms~$f$.

  We then check that each elliptic curve with conductor
  $27$, $54$, $96$, $288$ or~$864$ is
  isogenous (via an isogeny of degree prime to~$p$)
  to a quadratic twist (with $d$ in the specified set) of one of
  the seven curves mentioned in the statement of the lemma.
\end{proof}

The following proposition shows that the irreducibility assumption in the
previous lemma is automatically satisfied in most cases.

\begin{proposition} \label{P:irred}
  Let $(a,b,c) \in \Z^3$ be a non-trivial primitive solution of \eqref{E:GFE} for
  $p \geq 11$. Write $E = E_{(a,b,c)}$ for the associated Frey curve.
  Then $\rhobar_{E,p}$ is irreducible.
\end{proposition}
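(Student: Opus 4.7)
The strategy is to argue by contradiction: reducibility would give a rational cyclic $p$-isogeny on the Frey curve, and this is incompatible with the rigid form $j(E_{(a,b,c)}) = 12^3 b^3/c^p$ (together with $\gcd(b,c) = 1$) except for a handful of primes~$p$, which I would then check individually.

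Suppose $\rhobar_{E,p}$ is reducible. Then $E$ admits a $G_\Q$-stable subgroup~$C$ of order~$p$, so the pair $(E,C)$ defines a non-cuspidal rational point on the modular curve~$X_0(p)$. By Mazur's theorem on rational isogenies of elliptic curves over~$\Q$, this is possible for prime $p \ge 7$ only if $p \in \{7, 11, 13, 17, 19, 37, 43, 67, 163\}$; since $p \ge 11$, we are reduced to the eight primes
\[ p \in \{11, 13, 17, 19, 37, 43, 67, 163\}. \]

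For each $p$ in this list with $p \ne 13$, the curve~$X_0(p)$ has positive genus and $X_0(p)(\Q)$ is finite, giving an explicit finite list of possible $j$-invariants~$j_0$ (the ``sporadic'' values for $p \in \{11, 17, 19\}$ and the CM values for $p \in \{37, 43, 67, 163\}$). Each such $j_0$, written in lowest terms, has denominator supported only on $\{2, 3, p\}$. Equating $j_0 = 12^3 b^3/c^p$ with $\gcd(b,c) = 1$ and comparing $\ell$-adic valuations for each prime $\ell \mid c$, one bounds $|c|$ by a very small constant (because $c^p$, coprime to~$b^3$, must essentially divide the denominator of $j_0 / 12^3$); the finitely many remaining candidate pairs $(b,c)$ are then ruled out by direct substitution into $a^2 = c^p - b^3$.

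The main obstacle is the case $p = 13$, since $X_0(13) \cong \PP^1$ has infinitely many rational points and the $j$-map parametrizes a one-parameter family, so the finite-list strategy fails. For this I would analyze the isogeny character $\phi \colon G_\Q \to \F_{13}^*$ (giving the action on~$C$) directly. At any prime $\ell \ge 5$ dividing~$c$, the Frey curve has multiplicative reduction with $v_\ell(\Delta_E) = 13\, v_\ell(c)$ divisible by~$13$, so Tate uniformization forces inertia at~$\ell$ to act trivially on $E[13]$; hence $\phi$ is unramified outside $\{2, 3, 13\}$. Combined with the prescribed inertia behaviour at~$2$ and~$3$ coming from the Kodaira types of the Frey curve (as computed in the proof of Lemma~\ref{L:7curves}) and with the relation $\phi \cdot (\chi_{13} \phi^{-1}) = \chi_{13}$, this should leave only a short explicit list of candidate characters~$\phi$. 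Each candidate would then be eliminated by comparing $a_\ell(E) \bmod 13$ with $\phi(\Frob_\ell) + (\chi_{13} \phi^{-1})(\Frob_\ell) \bmod 13$ at a few auxiliary primes $\ell$ of good reduction for~$E$, or by matching against the explicit $j$-parameterization of $X_0(13) \to X(1)$.
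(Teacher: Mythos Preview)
For $p \neq 13$ your outline is essentially the paper's argument: Mazur's isogeny theorem reduces to a finite explicit list of $j$-invariants, and these are incompatible with the shape $j = 12^3 b^3/c^p$ for a non-trivial primitive solution. The paper is slightly sharper --- it observes that for $p \in \{11,19,43,67,163\}$ the sporadic $j$-invariants are integral, which with $\gcd(b,c)=1$ forces $c = \pm 1$ immediately, and then appeals to the (few) integral points on $y^2 = x^3 \pm 1$, noting finally that the Catalan Frey curve $864b1$ has trivial isogeny class; for $p = 17$ the $17$-adic valuation of the two sporadic $j$-invariants already contradicts~\eqref{E:jInvariant} --- but the substance is the same. (One small slip: the rational $37$-isogeny classes are \emph{not} CM; they are the genuinely sporadic non-CM points. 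Their $j$-invariants are nonetheless integral, so your bounding-$|c|$ argument survives.)

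The real gap is at $p = 13$. Your plan to cut down to a short list of isogeny characters~$\phi$ is correct and is exactly what the paper does: it passes to the twists $X_\chi(13)$ of $X_1(13)$ indexed by such characters and, after local solvability checks at $2$, $3$, $13$ and~$\infty$, is left with four essentially distinct genus-$2$ twists. But your proposed elimination step does not work. You cannot ``compare $a_\ell(E) \bmod 13$ at a few auxiliary primes'', because $E$ depends on the unknown solution $(a,b,c)$, so $a_\ell(E)$ is simply not available; and you cannot invoke level lowering to pin $a_\ell(E) \bmod 13$ to a known newform, since level lowering presupposes the very irreducibility you are trying to prove. What actually has to be done is to determine the rational points on each surviving twist of~$X_1(13)$. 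In the paper (Lemma~\ref{L:13nored}) one twist is $X_1(13)$ itself, one dies by an empty $2$-Selmer set, and the remaining two require Elliptic Curve Chabauty over a cubic field after a $2$-Selmer-set computation reduces to a single $2$-covering --- including an unconditional class group computation in a degree-$18$ number field. This is a substantial Chabauty-type computation, not a congruence check, and your proposal gives no indication of how to carry it out.
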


\begin{proof}
  First assume $p \neq 13$, so $p = 11$ or $p \ge 17$.
  Then by Mazur's results~\cite{Mazur1978}, there is only
  a finite list of $j$-invariants of elliptic curves over~$\Q$ that have a reducible
  mod~$p$ Galois representation (see also \cite{DahmenPhD}*{Theorem~22}).
  More precisely, either we have that
  \begin{enumerate}[(i)]
    \item $p = 11, 19, 43, 67, 163$ and the corresponding curves
          have integral $j$-invariant, or else
    \item $p = 17$ and the $j$-invariant is $-17^2 \cdot 101^3/2$
          or $-17 \cdot 373^3/2^{17}$.
  \end{enumerate}
  Suppose that $\rhobar_{E,p}$ is reducible, hence the Frey curve $E_{(a,b,c)}$
  corresponds to one of the curves in (i) or (ii). Note that $\gcd(a,b) = 1$.
  Suppose we are in case~(i). Since $p \geq 11$ and the $j$-invariant is integral,
  it follows that $c = \pm 1$, which implies that
  we either have one of the trivial solutions $(\pm 1, 0, 1)$, $\pm(0, 1, 1)$
  or the `Catalan solution' $(\pm 3, -2, 1)$, since the only integral points
  on the elliptic curves $y^2 = x^3 \pm 1$ (which both have finite Mordell-Weil group)
  have $x \in \{0, \pm 1, 2\}$.
  It remains to observe that the Frey curve associated to the Catalan solution
  (which is, up to quadratic twist, $864b1$) is the only curve in its isogeny
  class, so it has irreducible mod~$p$ Galois representations for all~$p$
  (see \cite{LMFDB}*{Elliptic curves over $\Q$}).
  If we are in case~(ii),
  then the $17$-adic valuation of the $j$-invariant contradicts~\eqref{E:jInvariant}.

  For $p = 13$ the claim is shown in Lemma~\ref{L:13nored}.
\end{proof}

We remark that the results of~\cite{PSS2007} show that the statement
of Proposition~\ref{P:irred} is also true for~$p = 7$.

Note that some of the seven curves in Lemma~\ref{L:7curves} are realized by twists of the
Frey curve evaluated at known solutions. Indeed,
\begin{gather*}
  E_{(1,0,1)}^{(6)} = 27a1, \quad E_{(0,1,1)} = 288a1, \quad
  E_{(0,-1,-1)}^{(2)} = 288a2, \quad E_{(3,-2,1)}^{(-2)} = 864b1,
\end{gather*}
and $288a2$ and $288a1$ are related by an isogeny of degree~2.
The solutions $(\pm 1, 1, 0)$ give rise to singular Frey curves.
Note also that $E^{(-d)}_{(-a,b,c)} = E^{(d)}_{(a,b,c)}$, so that $(-1,0,1)$
and $(-3,-2,1)$ do not lead to new curves.


\section{Local conditions and representations of inertia} \label{S:loccon}

Let $\ell$ be a prime.
We write $\Q_\ell^\unr$ for the maximal unramified extension of~$\Q_\ell$
and $I_\ell \subset G_{\Q_\ell}$ for the inertia subgroup.

Let $E$ be an elliptic curve over~$\Q_\ell$ with potentially good reduction.
Let $p \geq 3$, $p \ne \ell$ and $L = \Q_\ell^\unr(E[p])$.
The field~$L$ does not depend on~$p$
and is the smallest extension of~$\Q_\ell^\unr$ over
which $E$ acquires good reduction; see \cite{ST1968}*{\S 2, Corollary 3}.
We call $L$ the \emph{inertial field} of~$E/\Q_\ell$ or of $E$ at $\ell$,
when $E$ is defined over~$\Q$.

We write $L_{2,96}$ and~$L_{2,288}$ for the inertial fields at~$2$
of the elliptic curves with Cremona labels $96a1$ and~$288a1$, respectively,
and we write $L_{3,27}$ and~$L_{3,54}$ for the inertial fields at~$3$ of
the elliptic curves with Cremona labels $27a1$ and~$54a1$. The following
theorem shows that these are all the inertial fields that can arise from
certain types of elliptic curves.

Let $H_8$ denote
the quaternion group and $\Dic_{12} \simeq C_3 \rtimes C_4$ the dicyclic
group of 12~elements. The properties of $H_8$ and of~$\Dic_{12}$ that are
used below can easily be verified using a suitable computer algebra system
like for example~\Magma.

\begin{theorem} \label{T:inertiatypes}
  Let $E/\Q_\ell$ be an elliptic curve with potentially
  good reduction, conductor~$N_E$ and inertial field~$L$.
  Assume further one of the following two sets of hypotheses.
  \begin{enumerate}[\upshape (1)]
    \item \label{inertiatypes-1}
          $\ell = 2$, $\quad \Gal(L/\Q_2^\unr) \simeq H_8 \quad $ and $ \quad \vv_2(N_E) = 5$;
    \item \label{inertiatypes-2}
          $\ell = 3$, $\quad \Gal(L/\Q_3^\unr) \simeq \Dic_{12} \quad $ and $\quad \vv_3(N_E) = 3$.
  \end{enumerate}
  Then $L = L_{2,96}$ or~$L_{2,288}$ in case~\eqref{inertiatypes-1}
  and $L = L_{3,27}$ or~$L_{3,54}$ in case~\eqref{inertiatypes-2}.
\end{theorem}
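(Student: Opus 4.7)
The plan is to identify $L$ via the local Galois representation attached to $E$. Since $L$ is the smallest totally ramified extension of~$\Q_\ell^\unr$ over which $E$ acquires good reduction, we have $L = \Q_\ell^\unr(E[p])$ for any prime $p$ coprime to $|\Gal(L/\Q_\ell^\unr)|$. Hence $L$ is completely determined, up to isomorphism, by the conjugacy class of the surjection $I_\ell \twoheadrightarrow \Gal(L/\Q_\ell^\unr)$; in both cases the target has, up to twist, a unique faithful irreducible $2$-dimensional representation, so the underlying $2$-dimensional inertia representation is pinned down once the surjection is chosen.

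In case~(\ref{inertiatypes-1}), since $H_8$ is a $2$-group, the map $I_2 \twoheadrightarrow H_8$ factors through the wild inertia subgroup, and all lower numbering ramification groups are among the six normal subgroups of~$H_8$. I would compute the Artin conductor of the (essentially unique) faithful irreducible $2$-dimensional representation of $H_8$ from this filtration via the standard formula, and observe that the constraint $\vv_2(N_E) = 5$ allows only finitely many filtrations. A direct enumeration, or an appeal to Kraus's classification~\cite{Kraus1990} of inertial types at~$2$ for elliptic curves with potentially good reduction, shows that exactly two non-isomorphic extensions~$L$ survive. Finally one verifies by running Tate's algorithm on the minimal models that $96a1$ and $288a1$ realize these two extensions, so $L \in \{L_{2,96}, L_{2,288}\}$.

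In case~(\ref{inertiatypes-2}), since $\Dic_{12} \simeq C_3 \rtimes C_4$ has a normal Sylow $3$-subgroup, the wild inertia at~$3$ maps onto the $C_3$ factor while the tame inertia contributes the $C_4$ quotient; in particular $L$ contains the unique totally ramified cyclic degree-$4$ extension of $\Q_3^\unr$, so the bottom of $L/\Q_3^\unr$ is already constrained. Applying the Artin conductor formula to the faithful $2$-dimensional representation of $\Dic_{12}$ together with $\vv_3(N_E) = 3$ again leaves only finitely many possibilities for the wild filtration, which one shows reduce to two. These are realized by $27a1$ (which has CM by $\Z[\zeta_3]$) and its quadratic twist $54a1$, giving $L \in \{L_{3,27}, L_{3,54}\}$.

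The main obstacle is the finite enumeration in each case: one must rule out every filtration compatible with the prescribed Galois group and conductor exponent apart from those realized by the listed curves. In case~(\ref{inertiatypes-1}) this is essentially a local class field theory computation for the maximal abelian subextension $L^\ab/\Q_2^\unr$ (biquadratic, since $H_8^\ab \simeq (\Z/2)^2$) together with an analysis of the quaternionic wild extension sitting above it, while in case~(\ref{inertiatypes-2}) it reduces to constraining the wild $C_3$-extension on top of the fixed cyclic degree-$4$ subextension (since $\Dic_{12}^\ab \simeq C_4$). In both situations the bookkeeping is finite but delicate, and I expect it to carry the technical weight of the argument.
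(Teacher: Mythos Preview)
Your approach is genuinely different from the paper's and, as written, has a real gap.

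The paper does not enumerate ramification filtrations at all. Instead it uses the Weil--Deligne representation~$\rho_E$ attached to~$E$: the hypothesis on $\Gal(L/\Q_\ell^\unr)$ forces $\rho_E$ to be supercuspidal, hence $\rho_E = \Ind_{W_M}^{W_\ell}\chi$ for a ramified quadratic extension $M/\Q_\ell$ and a character~$\chi$ of~$W_M$. The conductor relation $\cond(\rho_E) = \cond(\chi) + \cond(\epsilon_M)$ together with the determinant constraint $\chi^A|_{\Q_\ell^\times} \cdot \epsilon_M^A = \|\cdot\|^{-1}$ pins down $\cond(\chi)$ and the restriction of~$\chi^A$ to~$\Z_\ell^\times$. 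One then simply lists the characters of the relevant finite quotient $(\calO_M/\fp^k)^\times$ meeting these constraints: for each of the two choices of~$M$ there is a single conjugate pair, hence a single~$\rho_E|_{I_\ell}$, hence a single~$L$. This reduces the whole problem to a short character count and is much cleaner than a filtration analysis.

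The gap in your proposal is the passage from ``filtration'' to ``field''. A chain of subgroups of~$H_8$ (or~$\Dic_{12}$) satisfying the conductor constraint does \emph{not} determine the extension~$L/\Q_\ell^\unr$; distinct extensions can and do share the same abstract ramification filtration. So ``a direct enumeration'' of filtrations, even combined with the Artin conductor formula, cannot by itself yield the conclusion that exactly two fields survive. To salvage your route you would have to actually classify the relevant $H_8$- and $\Dic_{12}$-extensions of~$\Q_\ell^\unr$ (e.g.\ via embedding problems over the abelian subextension you mention), and then compute conductors---substantially more work than you indicate. Kraus's paper~\cite{Kraus1990} classifies the possible groups $\Gal(L/\Q_\ell^\unr)$ and their numerical invariants, but it does not give you the list of fields~$L$ directly.

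Two smaller points. In case~(\ref{inertiatypes-2}) the tame $C_4$-subextension of~$\Q_3^\unr$ is indeed unique, so that part of your sketch is sound; but $54a1$ is \emph{not} a quadratic twist of~$27a1$ (the former has $j\neq 0$, the latter $j=0$), so your parenthetical is wrong. And the verification that $L_{2,96}\neq L_{2,288}$ and $L_{3,27}\neq L_{3,54}$---which you implicitly assume when you say the two curves ``realize these two extensions''---still requires a separate check, as in the paper.
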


\begin{proof}
  For a finite extension $K/\Q_\ell$ we denote the Weil subgroup of the absolute
  Galois group~$G_K$ by~$W_K$. We write $W_\ell$ for $W_{\Q_\ell}$.
  We let $r_K \colon K^\times \to W_K^{\ab}$
  denote the reciprocity map from local class field theory.
  It allows us to identify a character~$\chi$ of~$W_K$
  with the character $\chi^A = \chi \circ r_K$ of~$K^\times$.

  There is a representation $\rho_{E} \colon W_\ell \to \GL_2(\C)$
  of conductor~$\ell^{\vv_\ell(N_E)}$ attached to~$E$;
  see \cite{Rohrlich}*{Section~13} and~\cite{DDT}*{Remark~2.14}.
  This representation is either principal series or supercuspidal; see
  \cite{Pacetti}*{Section~2}, noting that the Steinberg representation
  corresponds to infinite inertia.

  Hypotheses \eqref{inertiatypes-1} and~\eqref{inertiatypes-2} both imply
  that $E$ acquires good reduction over a non-abelian extension of~$\Q_\ell^\unr$,
  hence $\rho_E$ is a supercuspidal representation. More precisely,
  $\rho_{E} = \Ind_{W_M}^{W_\ell}\chi$
  where $M/\Q_\ell$ is a quadratic extension, $W_M$
  is the Weil group of~$M$ and $\chi \colon W_M \to \C^\times$
  is a character. If $M/\Q_\ell$ is unramified,
  then $\rho_E|_{I_\ell} \simeq \chi \oplus \chi^s$, where
  $\chi^s(g) := \chi(sgs^{-1})$ and $s \in W_\ell$ lifts the
  non-trivial element of $\Gal(M/\Q_\ell)$. Thus inertia
  has abelian image, a contradiction. Therefore $M/\Q_\ell$ is ramified, and we have
  that $\rho_{E}|_{I_\ell} = \Ind_{I_M}^{I_\ell} (\chi|_{I_M})$,
  where $I_M \subset W_M$ is the inertia subgroup.

  Write $\epsilon_M$ for the quadratic character of~$G_{\Q_\ell}$ fixing~$M$.
  Then $(\chi^A|_{\Q_\ell^\times}) \cdot \epsilon_M^A = \|{\cdot}\|^{-1}$
  as characters of~$\Q_\ell^\times$, where $\|{\cdot}\|$ is the norm character.
  Furthermore, the conductor exponents of $\rho_E$ and~$\chi$ are related by
  $\cond(\rho_E) = \cond(\chi) + \cond(\epsilon_M)$; see \cite{Gerardin}*{2.8}.

  We denote the maximal ideal of~$M$ by~$\fp$.

  Suppose hypothesis~\eqref{inertiatypes-1}. From~\cite{Pacetti}*{Corollary~4.1}
  it follows that $M = \Q_2(d)$, where $d=\sqrt{-1}$ or $d=\sqrt{-5}$,
  hence $\cond(\epsilon_M) = 2$ and $\cond(\chi) = 5 - 2 = 3$.

  Since $\cond(\chi) = 3$, we have that $\chi^A|_{\calO_M^\times}$
  factors via~$(\calO_M/\fp^3)^\times$, which has order~$4$ and is generated by~$2+d$.
  The condition $\chi^A|_{\Z_2^\times} = \epsilon_M^A$ implies $\chi^A(-1)= -1$, thus $\chi^A(2+d)= \pm i$.
  We conclude that there are only two possibilities for~$\chi|_{I_M}$,
  which are related by conjugation, hence giving the same induction~$\rho_{E}|_{I_2}$.
  Thus, for each possible extension~$M$ there is only one field~$L$, so
  there are at most two possible fields~$L$.

  Finally, from \cite{Kraus1990}*{p.~357, Corollary} we see that the curves $96a1$ and~$288a1$
  satisfy hypothesis~\eqref{inertiatypes-1},
  and a direct computation in \Magma\ using the $3$-torsion fields shows that $L_{2,96} \ne L_{2,288}$.
  This proves the theorem in case~\eqref{inertiatypes-1}.

  Now suppose hypothesis~\eqref{inertiatypes-2}. We have $M = \Q_3(d)$, where $d = \sqrt{\pm 3}$,
  both fields satisfying $\vv_3(\Disc(M)) = 1$. Thus $3 = \cond(\chi) + \vv_3(\Disc(M))$
  implies that $\chi$ is of conductor~$\fp^2$.

  For both possible extensions~$M$ the character $\chi^A|_{\calO_M^\times}$
  factors through~$(\calO_M/\fp^2)^\times$,
  which is generated by $-1$ and~$1+d$ of orders $2$ and~$3$, respectively.
  The condition $\chi^A|_{\Z_3^\times} = \epsilon_M^A$ implies that $\chi^A(-1) = -1$
  and the conductor forces $\chi^A(1+d) = \zeta_3^c$ with $c=1$ or~$2$.
  Again, for each possible extension~$M$ there is only one field~$L$, so there are at most two
  possible fields~$L$.

  Finally, from \cite{Kraus1990}*{p.~355, Corollary},
  we see that the curves $27a1$ and~$54a1$ satisfy hypothesis~\eqref{inertiatypes-2},
  and again a direct computation with \Magma\ (but now with $5$-torsion fields as we
  need $\ell \neq p$) shows that $L_{3,27} \neq L_{3,54}$. This concludes the proof.
\end{proof}

In a similar way as in the preceding proof, using \Magma\ to compute with the $3$-torsion
fields over~$\Q_2$, one checks that $96a1$ and~$864c1$
have the same inertial field at~$2$; the same is true for $288a1$, $864a1$ and~$864b1$.
Similarly, working with the $5$-torsion over~$\Q_3$, we also check
that $27a1$, $864b1$ and~$864c1$ have the same inertial field at~$3$;
the same is true for $54a1$ and~$864a1$.

Moreover, we reprove and refine Lemma~\ref{L:7curves} by determining the $2$-adic
and $3$-adic conditions on $a$, $b$ and the twists
$d \in \{\pm 1, \pm 2, \pm 3, \pm 6\}$ such that
the inertial fields at $2$ and~$3$ of $E^{(d)}_{(a,b,c)}$
match those of the seven curves in Lemma~\ref{L:7curves}.
Indeed, the inertial field of $E^{(d)}_{(a,b,c)}$
at~$2$ can be computed from the $3$-torsion field with
only a finite amount of precision
for the Weierstrass model of $E^{(d)}_{(a,b,c)}/\Q_2$.  More precisely, there exists~$k$ such
that if $(x,y,z) \equiv (a,b,c) \bmod 2^k$, then
$E^{(d)}_{(a,b,c)}$ and~$E^{(d)}_{(x,y,z)}$ have the same inertial field at~$2$.
We run over all congruence classes for $x,y,z$ modulo~$2^k$ (with $x,y$ not both even)
and compute the inertial field in each case. We can use a version of Lemma~\ref{L:tech}
to show that $k = 3$ is sufficient.
Analogously we compute the inertial fields at~$3$ using $5$-torsion (here $k = 2$ is sufficient).

The $2$-adic information can be found in Table~\ref{Tab:2adic}.
The last row is interesting: in this case, the twists of the Frey curve that
have good reduction at~$2$ have
trace of Frobenius at~$2$ equal to $\pm 2$, so level lowering can never lead
to a curve of conductor~$27$
(which is the only possible odd conductor dividing~$12^3$), since these curves all have
trace of Frobenius equal to~$0$. The $3$-adic conditions can be found in Table~\ref{Tab:3adic}.
The first column in each table is just a line number; it will be useful as
reference in a later section. The remaining columns contain the indicated data.

\begin{table} \renewcommand{\arraystretch}{1.2}
\[ \begin{array}{|c||c|c||c|c|c|c|} \hline
 i_2  &  a \bmod 4 & b \bmod 8 & d            & \text{curves\large\strut} & v_2(N_E) & j \\
 \hline
 1 &     1         & -1        &  1,-3        & 54a1                & 1 & 2^{6-p} t^{-p} \\
   &    -1         & -1        &  -1,3        & 54a1                & 1 & 2^{6-p} t^{-p} \\\hline
 2 &     0         & 1         & \pm 1, \pm 3 & 288a1, 864a1, 864b1 & 5 & 12^3 - 3 \cdot 2^{10} t^2 \\
   &     0         & -3        & \pm 1, \pm 3 & 288a1, 864a1, 864b1 & 5 & 12^3 + 2^{10} t^2 \\
   &     0         & 3         & \pm 2, \pm 6 & 288a1, 864a1, 864b1 & 5 & 12^3 - 2^{10} t^2 \\
   &     0         & -1        & \pm 2, \pm 6 & 288a1, 864a1, 864b1 & 5 & 12^3 + 3 \cdot 2^{10} t^2 \\\hline
 3 &     2         & 1         & \pm 1, \pm 3 & 96a1, 864c1         & 5 & -2^6 + 2^{11}t \\
   &     2         & -3        & \pm 1, \pm 3 & 96a1, 864c1         & 5 & 15 \cdot 2^6 + 2^{11}t \\
   &     2         & 3         & \pm 2, \pm 6 & 96a1, 864c1         & 5 & 7 \cdot 2^6 + 2^{11} t \\
   &     2         & -1        & \pm 2, \pm 6 & 96a1, 864c1         & 5 & -9 \cdot 2^6 + 2^{11} t \\\hline
 4 &     1         & 0         & -2, 6        & 27a1                & 0 & 2^{15} t^3 \\
   &    -1         & 0         & 2, -6        & 27a1                & 0 & 2^{15} t^3 \\\hline
 5 &    \pm 1      & 2         & \pm 2, \pm 6 & 96a1, 864c1         & 5 & -2^9 + 2^{11} t \\\hline
 6 &    \pm 1      & -2        & \pm 2, \pm 6 & 288a1, 864a1, 864b1 & 5 & 2^9 +2^{11} t \\\hline
 7 &    1          & 4         & -2, 6        & \text{impossible}   & 0 & (2^{12} + 2^{13} t) \\
   &    -1         & 4         & 2, -6        & \text{impossible}   & 0 & (2^{12} + 2^{13} t) \\\hline
 \end{array}
\]
\caption{$2$-adic conditions. Here $E = E^{(d)}_{(a,b,c)}$.\newline
         $j$ gives the possible values of the associated $j$-invariant, with $t \in \Z_2$.}
\label{Tab:2adic}
\end{table}

\begin{table} \renewcommand{\arraystretch}{1.2}
\[ \begin{array}{|c||c|c||c|c|c|c|} \hline
  i_3 & a \bmod 9 & b \bmod 3 & d            & \text{curves\large\strut} & v_3(N_E) & j \\\hline
  1   & 1         & -1        & -3, 6        & 96a1                      & 1 & 3^{3-p} t^{-p}\\
      & -1        & -1        & 3, -6        & 96a1                      & 1 & 3^{3-p} t^{-p} \\\hline
  2   & 0         & 1         & \pm 1, \pm 2, \pm 3, \pm 6 & 288a1       & 2 & 12^3 - 3^7 t^2 \\
      & 0         & -1        & \pm 1, \pm 2, \pm 3, \pm 6 & 288a1       & 2 & 12^3 + 3^7 t^2 \\\hline
  3   & \pm 3     & 1         & \pm 1, \pm 2, \pm 3, \pm 6 & 27a1, 864b1, 864c1 & 3 & 3^3 + 3^6 t \\\hline
  4   & \pm 3     & -1        & \pm 1, \pm 2, \pm 3, \pm 6 & 54a1, 864a1 & 3 & -8 \cdot 3^3 + 3^6 t \\\hline
  5   & \pm 1     & 0         & \pm 1, \pm 2, \pm 3, \pm 6 & 27a1, 864b1, 864c1 & 3 & 3^6 t^3 \\
      & \pm 2     & 0         & \pm 1, \pm 2, \pm 3, \pm 6 & 27a1, 864b1, 864c1 & 3 & 2 \cdot 3^6 t^3 \\\hline
  6   & \pm 4     & 0         & \pm 1, \pm 2, \pm 3, \pm 6 & 288a1       & 2 & 4 \cdot 3^6 t^3 \\\hline
  7   & \pm 1     & 1         & \pm 1, \pm 2, \pm 3, \pm 6 & 54a1, 864a1 & 3 & -4 \cdot 3^3 + 3^5 t \\
      & \pm 4     & 1         & \pm 1, \pm 2, \pm 3, \pm 6 & 54a1, 864a1 & 3 & -3^3 + 3^5 t \\\hline
  8   & \pm 2     & 1         & \pm 1, \pm 2, \pm 3, \pm 6 & 288a1       & 2 & 2 \cdot 3^3 + 3^5 t \\\hline
   \end{array}
\]
\caption{$3$-adic conditions. $j$ is as in Table~\ref{Tab:2adic}, with $t \in \Z_3$ and $E = E^{(d)}_{(a,b,c)}$}
\label{Tab:3adic}
\end{table}

\begin{corollary} \label{C:cond}
  Let $p \ge 11$ be a prime number. Let $(a,b,c) \in \Z^3$ be coprime and satisfy
  $a^2 + b^3 = c^p$. Then $b \not\equiv 4 \bmod 8$, and if $c \neq 0$, then $c$ is not divisible
  by~$6$.
\end{corollary}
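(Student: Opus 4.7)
The plan is to combine Lemma~\ref{L:7curves}, which applies because $p\ge 11$ makes $\rhobar_{E,p}$ irreducible by Proposition~\ref{P:irred}, with the table analysis encoding the inertial conditions. Namely, some twist $E^{(d)}_{(a,b,c)}$ (with $d\in\{\pm 1,\pm 2,\pm 3,\pm 6\}$) has mod-$p$ representation isomorphic to that of one of the seven listed curves~$E$. Restricting this isomorphism to $I_\ell$ for $\ell\in\{2,3\}$, and noting that $p\ge 11$ is coprime to the order of the inertia image (bounded by $|H_8|=8$ at~$2$ and $|\Dic_{12}|=12$ at~$3$), forces the inertial fields at $\ell$ of $E^{(d)}_{(a,b,c)}$ and of~$E$ to coincide. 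Hence $(a\bmod 4,b\bmod 8,d)$ must lie in some row of Table~\ref{Tab:2adic} with $E$ among the entries of the ``curves'' column, and analogously for Table~\ref{Tab:3adic}.

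For the first assertion, suppose $b\equiv 4\bmod 8$. Primitivity forces $a$ odd (for otherwise $2\mid c$, contradicting $\gcd(a,b,c)=1$), so $a\equiv\pm 1\bmod 4$ and we land in the final row of Table~\ref{Tab:2adic}. I would then invoke the discussion immediately following that table: the relevant twists have good reduction at~$2$ with trace of Frobenius $\pm 2$, whereas the unique curve on the seven-element list with good reduction at~$2$ is $27a1$, which has $a_2=0$. Matching mod-$p$ representations would force $\pm 2\equiv 0\bmod p$, impossible for $p\ge 11$.

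For the second assertion, assume $6\mid c$ and $c\ne 0$. From $2\mid c$ and $p\ge 3$ we have $8\mid a^2+b^3$; a short residue-class analysis, using primitivity to rule out the parities making the sum odd, yields $a,b$ both odd and $b\equiv -1\bmod 8$. This fits row~$1$ of Table~\ref{Tab:2adic}, whose ``curves'' column consists solely of $54a1$. Analogously, $9\mid a^2+b^3$ together with primitivity forces $3\nmid a$ and $b\equiv -1\bmod 3$; since $b\equiv -1\bmod 3$ gives $b^3\equiv -1\bmod 9$, this sharpens to $a^2\equiv 1\bmod 9$, i.e.\ $a\equiv\pm 1\bmod 9$, which is row~$1$ of Table~\ref{Tab:3adic}, with curves column $\{96a1\}$. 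Since $54a1\ne 96a1$, the single $E$ supplied by Lemma~\ref{L:7curves} cannot simultaneously satisfy both constraints, contradiction. The whole argument rests on the tables, which do the real work; the only routine step here is the modular arithmetic forcing row~$1$ of both tables, and there is no serious obstacle beyond it.
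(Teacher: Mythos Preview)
Your argument is correct and follows essentially the same route as the paper: read off from Table~\ref{Tab:2adic} that $b\equiv 4\bmod 8$ is excluded (you spell out the trace-of-Frobenius reason the paper states just before the corollary), and for $6\mid c$ observe that the $2$-adic data force the target curve to be $54a1$ while the $3$-adic data force it to be $96a1$. One tiny point: you invoke Proposition~\ref{P:irred}, which assumes the solution is non-trivial, whereas the corollary is stated for all coprime $(a,b,c)$; you should dispose of the trivial solutions (and the case $c=0$, where the Frey curve is singular) by direct inspection before appealing to the tables.
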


\begin{proof}
  Table~\ref{Tab:2adic} shows that $b \equiv 4 \bmod 8$
  is impossible.
  If $c \neq 0$, then we have a twisted Frey curve $E^{(d)}_{(a,b,c)}$, which
  if $6 \mid c$ would have to be $p$-congruent to $54a1$ and to~$96a1$ at the same
  time; this is impossible.
\end{proof}

We observe that the residue classes of $a \bmod 36$ and $b \bmod 24$ determine
the corresponding curve in Lemma~\ref{L:7curves} uniquely, as given in Table~\ref{Tab:curves}.
The line number~$i_2$ of Table~\ref{Tab:2adic} determines the row and the
line number~$i_3$ of Table~\ref{Tab:3adic}, the column.

\begin{table} \renewcommand{\arraystretch}{1.2}
\[ \begin{array}{|r||c|c|c|c|}
     \hline
       i_2 \backslash i_3 & 1 & 2,6,8 &   3,5 & 4,7 \\\hline
               1    &    -  &    -    &    -  &  54a1 \\
               2,6  &    -  & 288a1   & 864b1 & 864a1 \\
               3,5  &  96a1 &    -    & 864c1 &    -  \\
               4    &    -  &    -    &  27a1 &    -  \\\hline
   \end{array}
\]
\caption{Curves $E$ determined by $(a \bmod 36, b \bmod 24)$.}
\label{Tab:curves}
\end{table}

A \Magma\ script that performs the necessary computations for the results
in this section is available as~\texttt{section3.magma} at~\cite{programs}.


\section{Symplectic and anti-symplectic isomorphisms of $p$-torsion}

Let $p$ be a prime.
Let $K$ be a field of characteristic zero or a finite field of characteristic~$\neq p$.
Fix a primitive $p$-th root of unity $\zeta_p \in \bar{K}$.
For $E$ an elliptic curve defined over~$K$ we write $E[p]$ for its $p$-torsion
$G_K$-module, $\rhobar_{E,p} \colon G_K \to \Aut(E[p])$ for the corresponding
Galois representation and $e_{E,p}$ for the Weil pairing on~$E[p]$.
We say that an $\F_p$-basis $(P,Q)$ of $E[p]$ is \emph{symplectic} if
$e_{E,p}(P,Q) = \zeta_p$.

Now let $E /K$ and $E'/K$ be two elliptic curves over some field~$K$ and
let $\phi \colon E[p] \to E'[p]$ be an isomorphism of $G_K$-modules.
Then there is an element $r(\phi) \in \F_p^\times$ such that
\[ e_{E',p}(\phi(P), \phi(Q)) = e_{E,p}(P, Q)^{r(\phi)} \quad \text{for all $P, Q \in E[p]$.} \]
Note that for any $a \in \F_p^\times$ we have $r(a\phi) = a^2 r(\phi)$.
So up to scaling~$\phi$, only the class of~$r(\phi)$ modulo squares matters.
We say that $\phi$ is a \textit{symplectic isomorphism} if
$r(\phi)$ is a square in~$\F_p^\times$, and an \textit{anti-symplectic isomorphism}
if $r(\phi)$ is a non-square.
Fix a non-square~$r_p \in \F_p^\times$.
We say that $\phi$ is \emph{strictly symplectic}, if $r(\phi) = 1$, and
\emph{strictly anti-symplectic}, if $r(\phi) = r_p$.
Finally, we say that $E[p]$ and~$E'[p]$ are
\emph{symplectically} (or \emph{anti-symplectically}) \emph{isomorphic},
if there exists a symplectic (or anti-symplectic) isomorphism of~$G_K$-modules between them.
Note that it is possible that $E[p]$ and~$E'[p]$ are both symplectically
and anti-symplectically isomorphic; this will be the case if and only if
$E[p]$ admits an anti-symplectic automorphism.

Note that an isogeny $\phi \colon E \to E'$ of degree~$n$ not divisible by~$p$ restricts
to an isomorphism $\phi \colon E[p] \to E'[p]$ such that $r(\phi) = n$.
This can be seen from the following computation, where $\hat{\phi}$ is the dual isogeny,
and where we use that fact that $\phi$ and~$\hat{\phi}$ are adjoint with respect
to the Weil pairing.
\[ e_{E',p}(\phi(P), \phi(Q)) = e_{E,p}(P, \hat{\phi}\phi(Q))
                              = e_{E,p}(P, nQ) = e_{E,p}(P,Q)^n .
\]
In particular, $\phi$ induces a symplectic isomorphism on $p$-torsion if $(n/p) = 1$
and an anti-symplectic isomorphism if $(n/p) = -1$.

For an elliptic curve $E/\Q$ there are two modular curves $X_E^+(p) = X_E(p)$ and~$X^-_E(p)$
defined over~$\Q$
that parameterize pairs $(E', \phi)$ consisting of an elliptic curve~$E'$
and a strictly symplectic (respectively, strictly anti-symplectic)
isomorphism $\phi \colon E'[p] \to E[p]$.
These two curves are twists of the standard modular curve~$X(p)$ that classifies
pairs $(E', \phi)$ such that $\phi \colon E'[p] \to M$ is a symplectic isomorphism,
with $M = \mu_p \times \Z/p\Z$ and a certain symplectic pairing on~$M$,
compare~\cite{PSS2007}*{Definition~4.1}.
As explained there, the existence of a non-trivial primitive solution~$(a,b,c)$
of~\eqref{E:GFE} implies that some twisted Frey curve $E_{(a,b,c)}^{(d)}$ gives rise to a
rational point on one of the modular curves $X_E(p)$ or~$X_E^-(p)$, where $E$ is one of
the seven elliptic curves in Lemma~\ref{L:7curves}. Thus the resolution of
equation~\eqref{E:GFE} for any particular $p \ge 11$ is reduced to the determination
of the sets of rational points on 14~modular curves~$X_E(p)$ and~$X^-_E(p)$.

We remark that taking quadratic twists by~$d$ of the pairs $(E', \phi)$
induces canonical isomorphisms $X_{E^{(d)}}(p) \simeq X_E(p)$ and
$X^-_{E^{(d)}}(p) \simeq X^-_E(p)$. Also note that each twist~$X_E(p)$
has a `canonical rational point' representing $(E, \id_{E[p]})$.
On the other hand, it is possible that the twist $X^-_E(p)$ does not
have any rational point. If $E'$ is isogenous to~$E$ by an isogeny~$\phi$
of degree~$n$ prime to~$p$, then $(E', \phi|_{E'[p]})$ gives rise to a rational point
on~$X_E(p)$ when $(n/p) = +1$ and on~$X^-_E(p)$ when $(n/p) = -1$.

In this section we study carefully when isomorphisms of
the torsion modules of elliptic curves preserve the Weil
pairing. This will allow us to discard
some of these 14~modular curves by local considerations.
Of course, from the last paragraph of Section~\ref{sec:levellowering}
it follows that it is impossible to discard
$X_{27a1}(p)$, $X_{288a1}(p)$ or~$X_{864b1}(p)$, since they have rational
points arising from the known solutions. Moreover, if $(2/p) = -1$
we also have a rational point on $ X_{288a1}^{-}(p) \simeq X_{288a2}(p)$.

We now recall a criterion from~\cite{KO} to decide, under certain hypotheses,
whether $E[p]$ and~$E'[p]$ are symplectically isomorphic, which will be useful later.

\begin{theorem}[\cite{KO}*{Proposition~2}] \label{T:KHtrick}
  Let $E$, $E'$ be elliptic curves over $\Q$ with minimal discriminants
  $\Delta$, $\Delta'$. Let $p$ be a prime such that $\rhobar_{E,p} \simeq \rhobar_{E',p}$.
  Suppose that $E$ and~$E'$ have multiplicative reduction at a prime $\ell \ne p$
  and that $p \nmid v_{\ell}(\Delta)$. Then $p \nmid v_{\ell}(\Delta')$,
  and the representations $E[p]$ and $E'[p]$ are symplectically isomorphic
  if and only if $v_\ell(\Delta)/v_\ell(\Delta')$ is a square mod~$p$.
\end{theorem}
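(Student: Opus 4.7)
The plan is to make both sides explicit via Tate's $\ell$-adic uniformization at the prime~$\ell$ of multiplicative reduction. After passing to~$\Q_\ell^\unr$ and, if the reduction of~$E$ is non-split, twisting by the unramified quadratic character so that things become split (this twist is trivial on inertia, hence invisible to the argument below), I identify $E(\overline{\Q}_\ell)$ with $\overline{\Q}_\ell^\times / q^\Z$, where the Tate parameter~$q$ satisfies $v_\ell(q) = v_\ell(\Delta)$. The group $E[p]$ then acquires the distinguished basis $(\zeta_p, q^{1/p})$, and the standard computation on the Tate curve gives $e_{E,p}(\zeta_p, q^{1/p}) = \zeta_p$ after fixing conventions, so this basis is symplectic. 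The same setup applies to~$E'$ with its own parameter~$q'$.

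Next I would make the inertia action explicit in this basis. Picking the tame inertia generator $\tau \in I_\ell$ characterized by $\tau(\pi^{1/p}) = \zeta_p\, \pi^{1/p}$ for a uniformizer~$\pi$ of~$\Q_\ell^\unr$, one checks that $\tau(\zeta_p) = \zeta_p$ and $\tau(q^{1/p}) = \zeta_p^{v_\ell(q)} q^{1/p}$, so
\[ \rhobar_{E,p}(\tau) = \begin{pmatrix} 1 & v_\ell(\Delta) \\ 0 & 1 \end{pmatrix} \]
in the Tate basis. The hypothesis $p \nmid v_\ell(\Delta)$ makes this matrix a non-trivial unipotent element whose fixed line is $\langle \zeta_p \rangle$. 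Transporting this across the given isomorphism $\rhobar_{E,p} \simeq \rhobar_{E',p}$ immediately forces $p \nmid v_\ell(\Delta')$ (the corresponding matrix on the $E'$ side must again be non-trivial unipotent) and pins down the inertia-fixed line on $E'[p]$ as $\langle \zeta_p \rangle$ as well.

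The remainder is a short $2 \times 2$ matrix computation. Writing any $G_{\Q_\ell}$-equivariant isomorphism $\phi \colon E[p] \to E'[p]$ as $\bigl(\begin{smallmatrix} a & b \\ c & d\end{smallmatrix}\bigr)$ with respect to the two Tate bases, preservation of the inertia-fixed line forces $c = 0$, and the compatibility $\phi \circ \rhobar_{E,p}(\tau) = \rhobar_{E',p}(\tau) \circ \phi$ yields $a\, v_\ell(\Delta) \equiv d\, v_\ell(\Delta') \pmod{p}$. Using bilinearity of the Weil pairing and the symplectic nature of both Tate bases,
\[ e_{E',p}\bigl(\phi(\zeta_p),\, \phi(q^{1/p})\bigr) = e_{E',p}\bigl(a\zeta_p,\, b\zeta_p + d\,(q')^{1/p}\bigr) = \zeta_p^{ad}, \]
so $r(\phi) = ad$, which modulo squares equals $a/d = v_\ell(\Delta')/v_\ell(\Delta)$. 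Hence $\phi$ is symplectic if and only if this ratio, equivalently $v_\ell(\Delta)/v_\ell(\Delta')$, is a square modulo~$p$.

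The main technical nuisance is the up-front bookkeeping on the Tate side: verifying carefully that $e_{E,p}(\zeta_p, q^{1/p}) = \zeta_p$ in our Weil-pairing convention, and checking that the choice of $p$-th root $q^{1/p}$ and the unramified quadratic twist needed in the non-split case do not alter the symplectic-vs-anti-symplectic status of~$\phi$ (the twist character is trivial on inertia, and rescaling $q^{1/p}$ only changes the matrix~$\phi$ by a symplectic change of basis). Once those conventions are pinned down, the argument reduces entirely to the linear algebra sketched above.
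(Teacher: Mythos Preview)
Your argument is correct and is essentially the standard proof via Tate's $\ell$-adic uniformization; this is indeed how the result is established in~\cite{KO}. Note, however, that the present paper does not give its own proof of this statement: Theorem~\ref{T:KHtrick} is simply quoted from~\cite{KO}*{Proposition~2} and used as a black box, so there is no in-paper argument to compare against. Your sketch is a faithful reconstruction of the Kraus--Oesterl\'e proof: the Tate basis $(\zeta_p, q^{1/p})$ is symplectic, tame inertia acts by $\bigl(\begin{smallmatrix}1 & v_\ell(\Delta)\\ 0 & 1\end{smallmatrix}\bigr)$, any $G_{\Q_\ell}$-equivariant $\phi$ is forced to be upper-triangular with $a\,v_\ell(\Delta) \equiv d\,v_\ell(\Delta') \pmod p$, and $\det M_\phi = ad$ lies in the square class of $v_\ell(\Delta)/v_\ell(\Delta')$. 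The bookkeeping you flag (sign in the Weil pairing, the unramified quadratic twist in the non-split case) is genuine but harmless, since both affect $r(\phi)$ only up to squares; one small addendum worth recording is that the $G_{\Q_\ell}$-isomorphism forces $E$ and~$E'$ to have the \emph{same} split/non-split type at~$\ell$ (Frobenius acts on the inertia-fixed line $\langle\zeta_p\rangle$ by the cyclotomic character or its unramified quadratic twist, and these must match), so the same twist is applied to both curves.
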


The objective of this section is to deduce similar results for
certain types of additive reduction at~$\ell$ (see also \cite{F33p}),
which we will then apply to our Diophantine problem in~Theorem~\ref{T:nominus}.

We will need the following auxiliary result.

\begin{lemma} \label{L:sympcriteria}
  Let $E$ and $E'$ be two elliptic curves defined over a field $K$ and
  with isomorphic $p$-torsion.
  Fix symplectic bases for $E[p]$ and $E'[p]$. Let $\phi \colon E[p] \to E'[p]$ be
  an isomorphism of $G_K$-modules and write $M_\phi$ for the matrix representing~$\phi$ with
  respect to these bases.

  Then $\phi$ is a symplectic isomorphism if and only if $\det(M_\phi)$ is a square mod~$p$;
  otherwise $\phi$ is anti-symplectic.

  Moreover, if $\rhobar_{E,p}(G_K)$ is a non-abelian subgroup of~$\GL_2(\F_p)$,
  then $E[p]$ and~$E'[p]$ cannot be simultaneously symplectically and anti-symplectically isomorphic.
\end{lemma}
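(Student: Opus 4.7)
The plan is to unpack the definition of $r(\phi)$ in coordinates for the first claim, and to reduce the second claim to the existence of a non-scalar $G_K$-equivariant automorphism of $E[p]$, which would contradict non-abelianness of the image of~$\rhobar_{E,p}$.

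For the first part: fix the symplectic bases $(P_1, P_2)$ of $E[p]$ and $(P'_1, P'_2)$ of $E'[p]$, so that $e_{E,p}(P_1, P_2) = e_{E',p}(P'_1, P'_2) = \zeta_p$. Writing $M_\phi = (m_{ij})$ with $\phi(P_j) = m_{1j} P'_1 + m_{2j} P'_2$ and expanding by bilinearity and the alternating property of the Weil pairing, a direct computation gives
\[
  e_{E',p}\bigl(\phi(P_1), \phi(P_2)\bigr)
    = e_{E',p}(P'_1, P'_2)^{\det M_\phi}
    = \zeta_p^{\det M_\phi}.
\]
Comparing with the defining relation $e_{E',p}(\phi(P_1), \phi(P_2)) = \zeta_p^{r(\phi)}$ yields $r(\phi) \equiv \det M_\phi \pmod p$, which is exactly the first assertion (a square $\det M_\phi$ corresponds to a symplectic $\phi$, a non-square to an anti-symplectic~$\phi$).

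For the second part, suppose for contradiction that there exist both a symplectic $G_K$-isomorphism $\phi_1$ and an anti-symplectic $G_K$-isomorphism $\phi_2$ from $E[p]$ to $E'[p]$. Then $\psi := \phi_1^{-1} \circ \phi_2$ is a $G_K$-equivariant automorphism of $E[p]$, and by the first part together with the multiplicativity $r(\phi \circ \phi') = r(\phi)\, r(\phi')$, the determinant $\det M_\psi$ is a non-square in $\F_p^\times$. In particular $M_\psi$ is not a scalar matrix. The key linear-algebra input is the standard fact that the centralizer in $\mathrm{M}_2(\F_p)$ of any non-scalar matrix $M$ equals the commutative subalgebra $\F_p[M]$; hence the centralizer in $\GL_2(\F_p)$ is the abelian group $\F_p[M]^\times$. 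Since $\psi$ commutes with $\rhobar_{E,p}(g)$ for all $g \in G_K$, this forces
\[
  \rhobar_{E,p}(G_K) \;\subseteq\; \F_p[M_\psi]^\times,
\]
which is abelian, contradicting the hypothesis.

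No step is truly an obstacle: the first claim is bilinear-algebra bookkeeping, and the second reduces to the elementary fact about centralizers of non-scalar $2\times 2$ matrices, once one observes that a scalar $M_\psi$ is ruled out immediately by the non-square determinant.
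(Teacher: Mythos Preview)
Your proof is correct and follows essentially the same approach as the paper: the first part is the identical bilinear computation, and for the second part both arguments hinge on the same linear-algebra fact, only phrased dually---the paper observes that any $G_K$-automorphism lies in the centralizer of the non-abelian image and hence must be scalar (citing \cite{HK2002}*{Lemme~A.3}), while you argue contrapositively that a non-scalar automorphism would force the image into the abelian group $\F_p[M_\psi]^\times$. These are two sides of the same coin.
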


\begin{proof}
  This is \cite{F33p}*{Lemma~1}.
\end{proof}


\subsection{A little bit of group theory} \strut

Recall that $H_8$ denotes the quaternion group and $\Dic_{12} \simeq C_3 \rtimes C_4$
is the dicyclic group of 12~elements; these are the two Galois groups occurring
in Theorem~\ref{T:inertiatypes}. We now consider them as subgroups of $\GL_2(\F_p)$.

Write $D_n$ for the dihedral group with $2n$~elements
and $S_n$ and~$A_n$ for the symmetric and alternating groups on $n$~letters.
We write $C(G)$ for the center of a group $G$.
If $H$ is a subgroup of $G$, then we write $N_G(H)$ for its normalizer
and $C_G(H)$ for its centralizer in~$G$.

\begin{lemma} \label{L:normalizer2}
  Let $p \geq 3$ and $G = \GL_2(\F_p)$. Let $H \subset G$ be a subgroup isomorphic to~$H_8$.
  Then the group~$\Aut(H)$ of automorphisms of~$H$ satisfies
  \[ N_G(H)/C(G) \simeq  \Aut(H) \simeq S_4. \]
  Moreover,
  \begin{enumerate}[\upshape(a)]
    \item if $(2/p) = 1$, then all the matrices in~$N_G(H)$ have square determinant;
    \item if $(2/p) = -1$, then the matrices in~$N_G(H)$ with square determinant
          correspond to the subgroup of~$\Aut(H)$ isomorphic to~$A_4$.
  \end{enumerate}
\end{lemma}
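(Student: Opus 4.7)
The plan is to treat the isomorphism and the determinant dichotomy separately. For the isomorphism $N_G(H)/C(G) \simeq \Aut(H) \simeq S_4$: since $p \neq 2$ is coprime to $|H_8|=8$, Maschke's theorem applies. A reducible $2$-dimensional representation of~$H_8$ would split as a sum of two $1$-dimensional characters, all of which are trivial on $Z(H_8)=\{\pm 1\}$; this contradicts the faithfulness of $H \hookrightarrow \GL_2(\F_p)$. So $H$ acts absolutely irreducibly on~$\F_p^2$, and Schur's lemma yields $C_G(H) = C(G)$. Conjugation then gives an injection $N_G(H)/C(G) \hookrightarrow \Aut(H)$; for surjectivity, for any $\alpha \in \Aut(H)$ the twisted representation $\rho \circ \alpha$ is again a copy of the unique $2$-dimensional absolutely irreducible $\F_p$-representation of~$H_8$, so an intertwiner $g \in \GL_2(\F_p)$ normalizes~$H$ and induces~$\alpha$. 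The classical identification $\Aut(H_8) \simeq S_4$ then identifies $\operatorname{Inn}(H_8) \simeq H_8/Z(H_8) \simeq V_4$ with the normal Klein-four subgroup of~$S_4$.

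For the determinant statement, I first observe that $\det h = 1$ for every $h \in H$: this is clear for $h = \pm I$, and otherwise $h^2 = -I$ together with $\operatorname{tr}(h) = 0$ (from the character table of~$H_8$) yields characteristic polynomial $X^2+1$. Since every scalar matrix also has square determinant, the composition
\[
  \epsilon \colon N_G(H) \xrightarrow{\det} \F_p^\times \twoheadrightarrow \F_p^\times / (\F_p^\times)^2
\]
vanishes on~$H \cdot C(G)$, whose image in $S_4 \simeq N_G(H)/C(G)$ is precisely $V_4 = \operatorname{Inn}(H_8)$. Thus $\epsilon$ factors through $S_4/V_4 \simeq S_3$, and since $S_3$ has a unique subgroup of index~$2$, the kernel of~$\epsilon$ in~$S_4$ is either all of~$S_4$ (case~(a)) or the $A_4$ subgroup (case~(b)).

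To decide which case occurs, I pass to the projective quotient: $\det g$ is a square if and only if the image of~$g$ in $\operatorname{PGL}_2(\F_p)$ lies in $\operatorname{PSL}_2(\F_p)$. The image of~$N_G(H)$ in~$\operatorname{PGL}_2(\F_p)$ coincides with $N_{\operatorname{PGL}_2}(\bar H) \simeq S_4$, where $\bar H = H/\{\pm I\} \simeq V_4$; and since $H \subset \SL_2(\F_p)$, we have $\bar H \subset \operatorname{PSL}_2(\F_p)$. By Dickson's classification of finite subgroups of~$\operatorname{PSL}_2(\F_p)$, for odd~$p$ this group always contains an~$A_4$ (extending our~$\bar H$), and contains an~$S_4$ if and only if $\left(\frac{2}{p}\right) = 1$. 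Hence in case~(a) the whole $S_4$ sits in~$\operatorname{PSL}_2(\F_p)$ and $\epsilon$ is trivial, while in case~(b) only the~$A_4$ does, forcing $\ker \epsilon = A_4$. The main obstacle is to verify, in case~(b), that $S_4 \cap \operatorname{PSL}_2(\F_p)$ is precisely this distinguished $A_4$ (rather than some other index-$2$ subgroup of~$S_4$); this follows because the $A_4 \subset \operatorname{PSL}_2(\F_p)$ containing~$\bar H$ automatically normalizes~$\bar H$ and hence lies in $N_{\operatorname{PGL}_2}(\bar H) = S_4$.
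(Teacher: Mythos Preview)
Your argument is correct in outline and genuinely different from the paper's. The paper proceeds by brute force: it writes down explicit generators $g_1,g_2$ for~$H$ and explicit normalizing matrices $n_1,n_2$ with $\det n_1=\det n_2=2$, then reads off the dichotomy directly. Your route via absolute irreducibility, Schur, the uniqueness of the two-dimensional representation of~$H_8$, and finally Dickson's classification of subgroups of~$\operatorname{PSL}_2(\F_p)$ is more conceptual and avoids any coordinates; the price is that it invokes a nontrivial classification theorem where the paper's argument is entirely self-contained.

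There is one point where your write-up is incomplete. In case~(a) you assert that ``the whole $S_4$ sits in~$\operatorname{PSL}_2(\F_p)$'', but Dickson only tells you that \emph{some} copy of~$S_4$ lies in~$\operatorname{PSL}_2(\F_p)$, not that your particular $S_4=N_{\operatorname{PGL}_2}(\bar H)$ does. To close this, take the Dickson $S_4'\subset\operatorname{PSL}_2(\F_p)$ and look at its normal $V_4$, say~$\bar H'$. The preimage of~$\bar H'$ in~$\SL_2(\F_p)$ has order~$8$ and a unique involution (namely $-I$, since $\pm I$ are the only elements of~$\SL_2(\F_p)$ squaring to~$I$), so it is a~$Q_8$. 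By the uniqueness of the faithful two-dimensional representation of~$H_8$ over~$\F_p$ that you already used, this~$Q_8$ is $\GL_2(\F_p)$-conjugate to~$H$, hence $\bar H'$ is $\operatorname{PGL}_2$-conjugate to~$\bar H$; since $\operatorname{PSL}_2$ is normal in~$\operatorname{PGL}_2$, conjugating $S_4'$ lands it inside~$\operatorname{PSL}_2$ and identifies it with your~$S_4$. The same conjugacy argument also justifies your parenthetical ``(extending our~$\bar H$)'', which you stated without proof. Your treatment of case~(b) is fine as written, since $A_4$ is the \emph{unique} index-$2$ subgroup of~$S_4$, so once Dickson forbids $S_4\subset\operatorname{PSL}_2(\F_p)$ the intersection is forced; the ``main obstacle'' you flag there is not actually an obstacle.
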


\begin{proof}
  There is only one faithful two-dimensional representation of~$H_8$ over~$\F_p$
  ($H_8$ has exactly one irreducible two-dimensional representation and any direct
  sum of one-dimensional representations factors over the maximal abelian quotient),
  so all subgroups~$H$ as in the statement are conjugate. We can therefore assume
  that $H$ is the subgroup generated by
  \[ g_1 = \begin{pmatrix} 0 & -1 \\ 1 & 0 \end{pmatrix} \quad\text{and}\quad
     g_2 = \begin{pmatrix} \alpha & \beta \\ \beta & -\alpha \end{pmatrix},
  \]
  where $\alpha, \beta \in \F_p^\times$ satisfy $\alpha^2 + \beta^2 = -1$.
  It is easy to see that the elements of~$H$ span the $\F_p$-vector space of
  $2 \times 2$ matrices, which implies that $C_G(H) = C(G)$.

  Now the action by conjugation induces a canonical group homomorphism
  $N_G(H) \to \Aut(H)$ with kernel $C_G(H) = C(G)$, leading to an injection
  $N_G(H)/C(G) \to \Aut(H)$. To see that this map is also surjective (and hence
  an isomorphism), note that $N_G(H)$ contains the matrices
  \[ n_1 = \begin{pmatrix} 1 & -1 \\ 1 & 1 \end{pmatrix} \quad\text{and}\quad
     n_2 = \begin{pmatrix} \alpha & \beta-1 \\ \beta+1 & -\alpha \end{pmatrix}
  \]
  and that the subgroup of~$N_G(H)/C(G)$ generated by the images of~$H$ and of
  these matrices has order~$24$. Since it can be easily checked that $\Aut(H_8) \simeq S_4$,
  the first claim follows.

  Note that $A_4$ is the unique subgroup of~$S_4$ of index~2.
  The determinant induces a homomorphism $S_4 \simeq N_G(H)/C(G) \to \F_p^\times/\F_p^{\times 2}$
  whose kernel is either $S_4$ or~$A_4$.
  Since $H \subset \SL_2(\F_p)$ and all matrices in $C(G)$ have square determinant,
  it remains to compute $\det(n_1)$ and~$\det(n_2)$. But $\det(n_1) = 2$ and
  \[ \det(n_2) = -\alpha^2 - (\beta - 1)(\beta + 1) = -\alpha^2 - \beta^2 + 1 = 2 \]
  as well. The result is now clear.
\end{proof}

\begin{lemma} \label{L:normalizer3}
  Let $p \geq 5$ and $G = \GL_2(\F_p)$. Let $H \subset G$ be a subgroup isomorphic to~$\Dic_{12}$.
  Then the group of automorphisms of $H$ satisfies
  \[ N_G(H)/C(G) \simeq \Aut(H) \simeq D_6. \]
  Moreover,
  \begin{enumerate}[\upshape(a)]
    \item if $(3/p) = 1$, then all the matrices in $N_G(H)$ have square determinant;
    \item if $(3/p) = -1$, then the matrices in $N_G(H)$ with square determinant correspond
          to the subgroup of inner automorphisms in~$\Aut(H)$.
  \end{enumerate}
\end{lemma}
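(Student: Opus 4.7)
The plan is to mirror the proof of Lemma~\ref{L:normalizer2}. First I would note that $\Dic_{12}$ has, up to isomorphism, a unique faithful irreducible $2$-dimensional representation, and that this representation has trivial determinant (so lands in $\SL_2$); consequently every subgroup $H \subset G$ isomorphic to $\Dic_{12}$ is $G$-conjugate to one fixed copy contained in $\SL_2(\F_p)$. It therefore suffices to work with a concrete $H = \langle h, k\rangle$ where $h$ has order $3$, $k^2 = -I$ and $khk^{-1} = h^{-1}$.

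Because the faithful $2$-dimensional representation of $\Dic_{12}$ is absolutely irreducible (which uses $p \geq 5$), Schur's lemma yields $C_G(H) = \F_p^\times \cdot I = C(G)$, and conjugation produces an injection $N_G(H)/C(G) \hookrightarrow \Aut(H)$. A direct inspection of $\Aut(\Dic_{12})$ shows it has order $12$ with inner subgroup $H/Z(H) \cong S_3$ of index $2$; an outer involution is given by $h \mapsto h$, $k \mapsto k^{-1}$, and it commutes with all inner automorphisms, so $\Aut(H) \cong S_3 \times C_2 \cong D_6$.

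To establish surjectivity of $N_G(H)/C(G) \to \Aut(H)$, the six inner automorphisms are realised by elements of $H$ itself, so it remains to produce an $n \in N_G(H)$ realising the outer involution $h \mapsto h$, $k \mapsto k^{-1}$. Since $n$ must centralise $h$, I write $n = aI + bh \in \F_p[h]$. Using $kh = h^{-1}k$ together with $h + h^{-1} = -I$ (a consequence of $h^2 + h + 1 = 0$), the anticommutation condition $nk = -kn$ reduces to $b = 2a$, so $n = a(I + 2h)$. A short calculation gives $(I + 2h)^2 = I + 4h + 4h^2 = -3I$, whence $\det(I + 2h) = 3$ and $\det n = 3a^2$.

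The lemma now falls out of the determinant analysis: matrices in $H \cdot C(G)$ (which maps onto the inner subgroup of $\Aut(H)$) have square determinant, since $H \subset \SL_2(\F_p)$; matrices in the outer coset $n \cdot H \cdot C(G)$ have determinant in the class of $3$ modulo squares, so they have square determinant if and only if $(3/p) = 1$. This gives parts~(a) and~(b) simultaneously. The main point of the argument is the explicit identity $\det(I + 2h) = 3$; it is the $\Dic_{12}$-analogue of $\det(n_1) = \det(n_2) = 2$ in Lemma~\ref{L:normalizer2}, and it is what singles out $(3/p)$ as the relevant Legendre symbol.
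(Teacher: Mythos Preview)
Your argument is correct and follows essentially the same route as the paper's proof: both reduce to a single conjugacy class, identify $C_G(H)=C(G)$ via irreducibility, and exhibit one normalizer element outside $H\cdot C(G)$ whose determinant equals~$3$; in fact your $I+2h$ is exactly $-M$ in the paper's notation (there $g_1=-h$, so $M=2g_1-I=-(I+2h)$). One small point: from $(I+2h)^2=-3I$ alone you only get $\det(I+2h)=\pm 3$; to pin down the sign note that $\mathrm{tr}(I+2h)=2+2(-1)=0$, so the characteristic polynomial is $\lambda^2+3$ and $\det(I+2h)=3$.
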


\begin{proof}
  The proof is similar to that of Lemma~\ref{L:normalizer2}.
  Again, there is a unique conjugacy class of subgroups isomorphic to~$\Dic_{12}$
  in~$G$, so we can take $H$ to be the subgroup generated by
  \[ g_1 = \begin{pmatrix} \alpha & \beta \\ \beta & 1-\alpha \end{pmatrix} \quad\text{and}\quad
     g_2 = \begin{pmatrix} 0 & -1 \\ 1 & 0 \end{pmatrix},
  \]
  where $\alpha, \beta \in \F_p$ satisfy $\beta^2 = -\alpha^2 + \alpha - 1$ with $\beta \neq 0$.
  As before, one sees that $C_G(H) = C(G)$, so we again have an injective
  group homomorphism $N_G(H)/C(G) \to \Aut(H) \simeq D_6$.

  The normalizer $N_G(H)$ contains the matrix
  \[ M = \begin{pmatrix} 2\alpha-1 & 2\beta \\ 2\beta & 1-2\alpha \end{pmatrix} \]
  and the images of~$H$ and~$M$ generate a subgroup of order~$12$ of~$N_G(H)/C(G)$,
  which shows that the homomorphism is also surjective.

  Since $H \subset \SL_2(\F_p)$, the determinant of any element of~$N_G(H)$
  that induces an inner automorphism of~$H$ is a square. Also, the inner automorphism
  group of~$H$ has order~$6$, so the homomorphism
  $D_6 \simeq N_G(H)/C(G) \to \F_p^\times/\F_p^{\times 2}$ induced by the
  determinant is either trivial or has kernel equal to the group of inner automorphisms.
  This depends on whether the determinant of~$M$,
  \[ \det(M) = -4\alpha^2 + 4\alpha - 1 - 4\beta^2 = 3 , \]
  is a square in~$\F_p$ or not.
\end{proof}


\subsection{The symplectic criteria} \strut
\label{S:theCriteria}

Let $E$, $E'$ be elliptic curves over~$\Q_\ell$ with potentially good reduction
and respective inertial fields $L = \Q_\ell^\unr(E[p])$ and~$L' = \Q_\ell^\unr(E'[p])$.
Suppose that $E[p]$ and~$E'[p]$ are isomorphic as $G_{\Q_\ell}$-modules;
in particular, $L = L'$. Write $I = \Gal(L/\Q_\ell^\unr)$ and recall that $I_\ell$
denotes the inertia subgroup of~$G_{\Q_\ell}$.

If $I$ is not abelian, then
Lemma~\ref{L:sympcriteria} applied with $K=\Q_\ell^\unr$ says that
$E[p]$ and~$E'[p]$ cannot be both symplectically
and anti-symplectically isomorphic $I_{\ell}$-modules.
Since the symplectic type of an isomorphism $\phi \colon E[p] \to E'[p]$
does not depend on whether it is considered as an isomorphism of $G_{\Q_\ell}$-modules
of of $I_\ell$-modules, we can conclude that $E[p]$ and~$E'[p]$ are
symplectically isomorphic as $G_{\Q_\ell}$-modules
if and only if they are symplectically isomorphic as $I_\ell$-modules.
In Theorem~\ref{T:maincrit2} we provide a criterion to decide between the two
possibilities when $\ell = 2$ and $I \simeq H_8$.
In Theorem~\ref{T:maincrit3} we do the same for $\ell = 3$ and $I \simeq \Dic_{12}$.

We now introduce notation and recall facts from \cite{ST1968}*{Section 2}
and~\cite{FK16}.
We note that~\cite{FK16}, which originated as a continuation of the work
done here, contains criteria similar to those stated below,
and also improvements of the second parts of Theorems \ref{T:maincrit2}
and~\ref{T:maincrit3}.

Let $p$ and $\ell$ be primes such that $p \geq 3$ and $\ell \ne p$.
Let $E/\Q_\ell$, $L$ and $I$ be as above.
Write $\Ebar$ for the elliptic curve over $\Fbar_\ell$ obtained by
reduction of a minimal model of $E/L$ and $\varphi \colon E[p] \to \Ebar[p]$ for the
reduction morphism, which preserves the Weil pairing.
Let $\Aut(\Ebar)$ be the automorphism group of~$\Ebar$ over~$\Fbar_\ell$
and write $\psi \colon \Aut(\Ebar) \to \GL(\Ebar[p])$ for the natural
injective morphism. The action of~$I$ on~$L$ induces an injective morphism
$\gamma_E \colon I \to \Aut(\Ebar)$, so that $\Ebar[p]$ is an $I$-module
via $\psi \circ \gamma_E$ in a natural way. Then $\varphi$ is actually
an isomorphism of $I$-modules: for $\sigma \in I$ we have
\begin{equation} \label{eqn:gamma}
 \varphi \circ \rhobar_{E,p}(\sigma) = \psi(\gamma_E(\sigma)) \circ \varphi .
\end{equation}

\begin{theorem} \label{T:maincrit2}
  Let $p \geq 3$ be a prime. Let $E$ and~$E'$ be elliptic curves over~$\Q_2$
  with potentially good reduction. Suppose they have the same inertial field
  and that $I \simeq H_8$.
  Then $E[p]$ and $E'[p]$ are isomorphic as $I_2$-modules. Moreover,
  \begin{enumerate}[\upshape(1)]
    \item if $(2/p) = 1$, then $E[p]$ and $E'[p]$ are symplectically isomorphic $I_2$-modules;
    \item if $(2/p) = -1$, then $E[p]$ and $E'[p]$ are symplectically isomorphic $I_2$-modules
          if and only if $E[3]$ and~$E'[3]$ are symplectically isomorphic $I_2$-modules.
  \end{enumerate}
\end{theorem}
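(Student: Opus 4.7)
The plan is to reduce both assertions to the group-theoretic results in Lemma~\ref{L:normalizer2}. First, using~\eqref{eqn:gamma}, the $I_2$-action on~$E[p]$ factors through the composition $\psi \circ \gamma_E \colon I_2 \hookrightarrow \Aut(\Ebar) \hookrightarrow \GL_2(\F_p)$, and with respect to a symplectic basis of~$\Ebar[p]$ the image lies in~$\SL_2(\F_p)$ (the determinant is the mod~$p$ cyclotomic character, which is trivial on~$I_2$ since $\Q_2(\mu_p)/\Q_2$ is unramified for $p$ odd, and $\Ebar$ acquires good reduction over $L$). Both $\Ebar$ and~$\overline{E'}$ are supersingular over~$\Fbar_2$ (their automorphism groups contain~$H_8$), hence isomorphic; fix an isomorphism $\iota \colon \Ebar \to \overline{E'}$ over~$\Fbar_2$, and transport the symplectic basis from~$\Ebar[p]$ to~$\overline{E'}[p]$ via~$\iota$ (still symplectic, since $\deg\iota = 1$). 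With these bases, $\rhobar_{E,p}(I_2)$ and $\rhobar_{E',p}(I_2)$ both coincide with the image in $\SL_2(\F_p)$ of the (unique) Sylow $2$-subgroup of $\Aut(\Ebar) \simeq \SL_2(\F_3)$. Call this common image $H \simeq H_8$. In particular the two $I_2$-representations are equivalent, proving the first sentence of the theorem.

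Any $I_2$-module isomorphism $\phi \colon E[p] \to E'[p]$ is then, in these symplectic bases, a matrix $M_\phi \in N_{\GL_2(\F_p)}(H)$ unique up to scalar, and it induces by conjugation the automorphism $\alpha = \rhobar_{E',p} \circ \rhobar_{E,p}^{-1} \in \Aut(H) \simeq S_4$. By Lemma~\ref{L:sympcriteria}, $\phi$ is symplectic iff $\det M_\phi \in (\F_p^\times)^2$. In case~(1), Lemma~\ref{L:normalizer2}(a) says this holds for \emph{every} element of $N_{\GL_2(\F_p)}(H)$, so $\phi$ is automatically symplectic.

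For~(2), with $(2/p) = -1$, Lemma~\ref{L:normalizer2}(b) instead gives: $\phi$ is symplectic iff $\alpha \in A_4 \subset S_4$. The key observation is that the class $\bar{\alpha} \in S_4/A_4 \simeq C_2$ is independent of~$p$. Indeed, unwinding the construction, $\alpha$ is (the restriction to~$H$ of) the automorphism $\iota_\ast^{-1} \circ \gamma_{E'} \circ \gamma_E^{-1}$ of~$\Aut(\Ebar)$; a different choice of~$\iota$ modifies $\alpha$ by conjugation by an element of $\Aut(\Ebar) \simeq \SL_2(\F_3)$, and the image in $\Aut(H_8) \simeq S_4$ of this conjugation action is $\SL_2(\F_3)/C(\SL_2(\F_3)) \simeq A_4$. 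Hence $\bar{\alpha}$ depends only on the pair $(\gamma_E, \gamma_{E'})$, not on~$p$. Applying the same analysis with $p$ replaced by~$3$ (legitimate because $(2/3) = -1$ as well, and since $\Aut(\Ebar)$ acts faithfully on $\Ebar[3]$), we conclude that $E[3]$ and $E'[3]$ are symplectically isomorphic $I_2$-modules iff $\bar{\alpha}$ is trivial iff $E[p]$ and $E'[p]$ are. The main subtlety is precisely this $p$-independence of $\bar{\alpha}$, which is tracked via the canonical identification of $N_{\GL_2(\F_\ell)}(H)/C(\GL_2(\F_\ell))$ with $\Aut(H_8) \simeq S_4$ provided by Lemma~\ref{L:normalizer2}, applied uniformly for $\ell = p$ and $\ell = 3$.
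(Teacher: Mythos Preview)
Your proof is correct and follows essentially the same route as the paper's: both factor the $I_2$-action through $\gamma_E, \gamma_{E'} \colon I \hookrightarrow \Aut(\Ebar) \simeq \SL_2(\F_3)$, identify the common image with the unique $H_8$-subgroup, and then invoke Lemma~\ref{L:normalizer2} together with Lemma~\ref{L:sympcriteria} to reduce the symplecticity question to whether the induced automorphism of~$H_8$ lies in~$A_4$, finally observing that this automorphism (or, in your version, its class modulo~$A_4$) is independent of~$p$ because it is determined by the $p$-independent data $\gamma_E, \gamma_{E'}$. The only cosmetic differences are that the paper fixes models of $E$ and~$E'$ reducing to the \emph{same}~$\Ebar$ (so no~$\iota$ is needed) and argues that the automorphism itself is $p$-independent, whereas you track the~$\iota$-ambiguity explicitly and show it only affects the $A_4$-coset; also, your phrase ``the automorphism $\iota_\ast^{-1}\circ\gamma_{E'}\circ\gamma_E^{-1}$ of~$\Aut(\Ebar)$'' should read ``of~$H$'', since this composite is only defined on the image~$H$ of~$\gamma_E$.
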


\begin{proof}
  Note that $L = \Q_2^\unr(E[p])$ is the smallest extension of~$\Q_2^\unr$ over which
  $E$ acquires good reduction and that the reduction map~$\varphi$ is an isomorphism
  between the $\F_p$-vector spaces $E[p](L)$ and~$\Ebar[p](\Fbar_2)$.
  By hypothesis $E'$ also has good reduction over~$L$ and $\varphi'$ is an isomorphism.
  Applying equation~\eqref{eqn:gamma} to both $E$ and~$E'$ we see that $E[p]$ and~$E'[p]$
  are isomorphic $I_2$-modules, if we can show that
  $\psi \circ \gamma_E$ and $\psi \circ \gamma_{E'}$ are isomorphic as representations
  into $\GL(\Ebar[p])$ and $\GL(\Ebar'[p])$, respectively.

  Since the map $\gamma_E \colon I \to \Aut(\Ebar)$ is injective, we have
  $H_8 \subset \Aut(\Ebar)$. From \cite{SilvermanI}*{Thm.III.10.1 and Ex.~A.1 on p.~414}
  we see that $\Aut(\Ebar) \simeq \SL_2(\F_3)$ and $j(\Ebar) = 0$. Similarly, we conclude
  that $j(\Ebar') = 0$. Thus, $\Ebar$ and $\Ebar'$ are isomorphic over~$\Fbar_2$.

  So we can fix minimal models of $E/L$ and~$E'/L$
  both reducing to the same $\Ebar$.

  There is only one (hence normal) subgroup~$H$ of~$\SL_2(\F_3)$ isomorphic to~$H_8$.
  Therefore we have that $\psi(\gamma_E(I)) = \psi(\gamma_{E'}(I)) = \psi(H)$ in $\GL(\Ebar[p])$,
  and there must be an automorphism $\alpha \in \Aut(\psi(H))$ such that
  $\psi \circ \gamma_E = \alpha \circ \psi \circ \gamma_{E'}$.
  The first statement of Lemma~\ref{L:normalizer2} shows that there is
  $g \in \GL(\Ebar[p])$ such that $\alpha(x) = gxg^{-1}$ for all $x \in \psi(H)$;
  thus $\psi \circ \gamma_E$ and $\psi \circ \gamma_{E'}$ are isomorphic representations.

  Fix a symplectic basis of $\Ebar[p]$, thus identifying $\GL(\overline{E}[p])$ with $\GL_2(\F_p)$.
  Let $M_g$ denote the matrix representing $g$ and observe that $M_g \in N_{\GL_2(\F_p)}(\psi(H))$.
  Lift the fixed basis to bases of $E[p]$ and $E'[p]$ via the corresponding
  reduction maps $\varphi$ and~$\varphi'$. The lifted bases are symplectic.
  The matrices representing $\varphi$ and~$\varphi'$ with respect to these bases are the
  identity matrix in both cases.
  From \eqref{eqn:gamma} it follows that $\rhobar_{E,p}(\sigma) = M_g \rhobar_{E',p}(\sigma) M_g^{-1}$
  for all $\sigma \in I$. Moreover, $M_g$ represents some $I_2$-module isomorphism
  $\phi \colon E[p] \rightarrow E'[p]$, and from Lemma~\ref{L:sympcriteria}
  we have that $E[p]$ and~$E'[p]$ are symplectically isomorphic if and only if
  $\det(M_g)$ is a square mod~$p$.

  Part~(1) then follows from Lemma~\ref{L:normalizer2}~(a).

  We now prove (2). From Lemma~\ref{L:normalizer2} (b)
  we see that $E[p]$ and~$E'[p]$ are symplectically isomorphic if and only if $\alpha$
  is an automorphism in $A_4 \subset \Aut(\psi(H)) \simeq S_4$.
  Note that these are precisely the inner automorphisms or automorphisms of order~3.
  Note also that all the elements in $S_4$ that are not in~$A_4$ are not inner and have order 2 or~4.
  For each $p$ the map $\alpha_{p} = \psi^{-1}\circ\alpha\circ\psi$ defines an automorphism
  of $\gamma_E(I) = H \subset \Aut(\Ebar)$ satisfying $\alpha_{p} \circ \gamma_{E'} = \gamma_E$.

  We note that the unique automorphism of $\SL_2(\F_3)$ which fixes the
  order~8 subgroup point-wise is the identity. Since $\gamma_E$, $\gamma_{E'}$ are independent of~$p$,
  it follows that $\alpha_{p}$ is the same for all $p$. Since $\alpha$ and $\alpha_p$
  have the same order and are simultaneously inner or not it follows that this property is independent
  of the prime $p$ satisfying $(2/p) = -1$. This shows that $E[p]$ and $E'[p]$
  are symplectically isomorphic $I_2$-modules if and only if $E[\ell]$ and $E'[\ell]$
  are symplectically isomorphic $I_2$-modules for one (hence all) $\ell$ satisfying $(2/\ell) = -1$.
  In particular, we can take $\ell = 3$, and the result follows.
\end{proof}

\begin{theorem} \label{T:maincrit3}
  Let $p \geq 5$ be a prime. Let $E$ and~$E'$ be elliptic curves over~$\Q_3$
  with potentially good reduction.
  Suppose they have the same inertial field and that $I \simeq \Dic_{12}$.
  Then $E[p]$ and~$E'[p]$ are isomorphic as $I_3$-modules. Moreover,
  \begin{enumerate}[\upshape(1)]
    \item if $(3/p) = 1$, then $E[p]$ and $E'[p]$ are symplectically isomorphic $I_3$-modules;
    \item if $(3/p) = -1$, then $E[p]$ and $E'[p]$ are symplectically isomorphic $I_3$-modules
          if and only if $E[5]$ and~$E'[5]$ are symplectically isomorphic $I_3$-modules.
  \end{enumerate}
\end{theorem}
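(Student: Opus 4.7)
The plan is to mimic the proof of Theorem~\ref{T:maincrit2} step by step, with the roles of $\ell=2$, $H_8$, and $\SL_2(\F_3)$ played by $\ell=3$, $\Dic_{12}$, and the automorphism group $\Aut(\Ebar)$ of an elliptic curve over~$\Fbar_3$ with $j=0$.

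First I would set up the reduction picture. Both $E$ and $E'$ acquire good reduction over their common inertial field~$L$, and the injections $\gamma_E,\gamma_{E'}\colon I\hookrightarrow \Aut(\Ebar),\Aut(\Ebar')$ force $|\Aut(\Ebar)|,|\Aut(\Ebar')|\geq 12$. In characteristic~$3$, this forces $j(\Ebar)=j(\Ebar')=0$ and $\Aut(\Ebar)\simeq\Aut(\Ebar')\simeq\Dic_{12}$ (cf.\ \cite{DD2015}*{Theorem~3.2} for the analogous fact used in the $H_8$ case). As in the previous proof, fix minimal models of $E/L$ and $E'/L$ reducing to the \emph{same} curve~$\Ebar$, and write $H=\Aut(\Ebar)$. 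A key structural difference from the $H_8$ case is that here $\gamma_E$ and $\gamma_{E'}$ map $I$ onto all of~$H$, so trivially $\psi(\gamma_E(I))=\psi(\gamma_{E'}(I))=\psi(H)\subset\GL(\Ebar[p])$.

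Next, exactly as in Theorem~\ref{T:maincrit2}, there exists $\alpha\in\Aut(\psi(H))$ with $\psi\circ\gamma_E=\alpha\circ\psi\circ\gamma_{E'}$, and the first part of Lemma~\ref{L:normalizer3} realizes $\alpha$ as conjugation by some $g\in\GL(\Ebar[p])$ whose matrix $M_g$ lies in $N_{\GL_2(\F_p)}(\psi(H))$. Lifting a symplectic basis of~$\Ebar[p]$ back via $\varphi$ and~$\varphi'$ produces symplectic bases of~$E[p]$ and~$E'[p]$; with respect to these, \eqref{eqn:gamma} yields $\rhobar_{E,p}(\sigma)=M_g\,\rhobar_{E',p}(\sigma)\,M_g^{-1}$ for all $\sigma\in I$, so $M_g$ represents an $I_3$-equivariant isomorphism $\phi\colon E[p]\to E'[p]$. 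By Lemma~\ref{L:sympcriteria}, $\phi$ is symplectic if and only if $\det(M_g)$ is a square mod~$p$. Statement~(1) follows immediately from Lemma~\ref{L:normalizer3}(a).

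For statement~(2), Lemma~\ref{L:normalizer3}(b) reduces the question (when $(3/p)=-1$) to whether $\alpha$ is an inner automorphism of~$\psi(H)$, equivalently whether $\alpha_p:=\psi^{-1}\circ\alpha\circ\psi=\gamma_E\circ\gamma_{E'}^{-1}$ is an inner automorphism of~$H$. The key point, as in Theorem~\ref{T:maincrit2}, is that $\gamma_E$ and $\gamma_{E'}$ depend only on the chosen minimal models and not on~$p$; consequently $\alpha_p\in\Aut(H)$ is the same element for every prime~$p$, and its inner/outer character is $p$-independent. Specializing to $p=5$ (which does satisfy $(3/5)=-1$) then yields the stated equivalence. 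The main obstacle I anticipate is the characteristic-$3$ group-theoretic input: verifying cleanly that the image of $\psi$ sits in $\GL(\Ebar[p])$ as the full $\Dic_{12}$ (rather than a proper subgroup of a larger $\Aut(\Ebar)$, as in the characteristic-$2$ case), and then correctly tracking the identification of the inner subgroup of $\Aut(H)\simeq D_6$ with the subgroup of matrices of square determinant via Lemma~\ref{L:normalizer3}(b). Everything else is parallel bookkeeping with the previous proof.
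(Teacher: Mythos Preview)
Your proposal is correct and matches the paper's own proof essentially line for line: the paper also says the argument is analogous to Theorem~\ref{T:maincrit2} with $3$ and~$5$ replacing $2$ and~$3$, notes that $\Aut(\Ebar)\simeq\Dic_{12}$ so that $\psi(\gamma_E(I))=\psi(\gamma_{E'}(I))=\psi(\Aut(\Ebar))$, and then invokes Lemma~\ref{L:normalizer3}(a),(b) in place of Lemma~\ref{L:normalizer2}. Your observation that $\alpha_p=\gamma_E\circ\gamma_{E'}^{-1}$ is $p$-independent because $\gamma_E,\gamma_{E'}$ are, is exactly the mechanism the paper uses (and is in fact slightly cleaner here since the image is all of~$\Aut(\Ebar)$); the ``obstacle'' you anticipate is handled by the standard citation \cite{SilvermanI}*{Thm.~III.10.1}.
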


\begin{proof}
  This proof is analogous to the proof of Theorem~\ref{T:maincrit2},
  with 3 and~5 taking over the roles of 2 and~3, respectively.

  In this case $\Aut(\Ebar) \simeq \Dic_{12}$ \cite{SilvermanI}*{Thm.III.10.1},
  so $\psi(\gamma_E(I)) = \psi(\gamma_{E'}(I)) = \psi(\Aut(\Ebar))$.
  We use Lemma~\ref{L:normalizer3} instead of Lemma~\ref{L:normalizer2} to conclude that
  $\alpha$ is given by a matrix~$M_g$. Lemma~\ref{L:normalizer3}~(a) concludes the
  proof of~(1) and Lemma~\ref{L:normalizer3}~(b) the proof of~(2).
\end{proof}


\section{Reducing the number of relevant twists}

Using the results of the previous section we will now show that one can discard some
of the 14 twists of~$X(p)$, depending on the residue class of~$p$ mod~$24$.

\begin{theorem} \label{T:nominus}
  Let $p \ge 11$ be prime and let $(a,b,c) \in \Z^3$ be a non-trivial primitive solution
  of $x^2 + y^3 = z^p$. Then the associated Frey curve~$E^{(d)}_{(a,b,c)}$
  gives rise to a rational point on one of the following twists of~$X(p)$,
  depending on the residue class of~$p$ mod~$24$.
  \[ \renewcommand{\arraystretch}{1.2}
     \begin{array}{|c|l|} \hline
       p \bmod 24 & \text{twists of $X(p)$} \\\hline\hline
        1         & X_{27a1}(p), \; X_{54a1}(p), \; X_{96a1}(p), \; X_{288a1}(p), \;
                    X_{864a1}(p), \; X_{864b1}(p), \; X_{864c1}(p) \\\hline
        5         & X_{27a1}(p), \; X_{54a2}(p), \; X_{96a1}(p), \; X_{288a1}(p), \; X_{288a2}(p), \\
                  & X_{864a1}(p), \; X^-_{864a1}(p), \; X_{864b1}(p), \; X^-_{864b1}(p), \;
                    X_{864c1}(p), \; X^-_{864c1}(p) \\\hline
        7         & X_{27a1}(p), \; X_{54a2}(p), \; X_{96a1}(p), \; X_{288a1}(p), \;
                    X_{864a1}(p), \; X_{864b1}(p), \; X_{864c1}(p) \\\hline
       11         & X_{27a1}(p), \; X_{54a1}(p), \; X_{96a1}(p), \; X_{288a1}(p), \; X_{288a2}(p), \\
                  & X_{864a1}(p), \; X_{864b1}(p), \; X_{864c1}(p) \\\hline
       13         & X_{27a1}(p),                 \; X_{96a2}(p), \; X_{288a1}(p), \; X_{288a2}(p), \;
                    X_{864a1}(p), \; X_{864b1}(p), \; X_{864c1}(p) \\\hline
       17         & X_{27a1}(p), \; X_{54a1}(p),                 \; X_{288a1}(p), \;
                    X_{864a1}(p), \; X_{864b1}(p), \; X_{864c1}(p) \\\hline
       19         & X_{27a1}(p), \; X_{54a1}(p), \; X_{96a2}(p), \; X_{288a1}(p), \; X_{288a2}(p), \\
                  & X_{864a1}(p), \; X^-_{864a1}(p), \; X_{864b1}(p), \; X^-_{864b1}(p), \;
                    X_{864c1}(p), \; X^-_{864c1}(p) \\\hline
       23         & X_{27a1}(p),                                 \; X_{288a1}(p), \;
                    X_{864a1}(p), \; X_{864b1}(p), \; X_{864c1}(p) \\\hline
     \end{array}
  \]
\end{theorem}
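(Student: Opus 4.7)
The plan is to determine, for each row of Tables~\ref{Tab:2adic} and~\ref{Tab:3adic} and each residue class of~$p$ modulo~$24$, whether the $G_\Q$-module isomorphism $E'[p]\simeq E[p]$ between the twisted Frey curve $E'=E^{(d)}_{(a,b,c)}$ and the reference curve $E$ of Lemma~\ref{L:7curves} is symplectic or anti-symplectic, and then to collect the resulting rational points on the twists~$X_E^{\pm}(p)$. Two guiding principles underlie the bookkeeping. First, the square class of $r(\phi)$ modulo~$p$ is intrinsic to~$\phi$, so a global $\phi$ is symplectic if and only if its restriction to~$I_2$ (equivalently, to~$I_3$) is symplectic; in particular the $I_2$ and $I_3$ verdicts must coincide as square classes, and inconsistent combinations rule out Frey curves. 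Second, a $\Q$-isogeny $\psi\colon E\to\widetilde E$ of degree~$n$ coprime to~$p$ induces a $p$-torsion isomorphism of symplectic type $(n/p)$; hence an anti-symplectic isomorphism $E'[p]\simeq E[p]$ can be converted into a symplectic isomorphism $E'[p]\simeq\widetilde E[p]$ whenever $E$ admits an isogenous partner~$\widetilde E$ (among the seven reference curves of Lemma~\ref{L:7curves}, up to quadratic twist) connected by an isogeny of non-square degree mod~$p$; if no such partner exists, the point is genuinely forced onto $X_E^-(p)$.

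Next I would process the rows of the two tables. Rows with $v_\ell(N_E)=1$ correspond to multiplicative reduction at~$\ell$, where the classical symplectic criterion (Theorem~\ref{T:KHtrick}) gives the square class of $r(\phi)$ directly from the ratio of the $\ell$-adic valuations of the two minimal discriminants. Rows with $v_2(N_E)=5$ correspond, by Theorem~\ref{T:inertiatypes}\eqref{inertiatypes-1}, to an inertial type at~$2$ matching either $96a1$ (shared with~$864c1$) or $288a1$ (shared with $864a1$ and~$864b1$), both of type~$H_8$; Theorem~\ref{T:maincrit2} then makes the $I_2$-isomorphism automatically symplectic when $(2/p)=1$, and when $(2/p)=-1$ reduces the question to a comparison of $3$-torsion $I_2$-modules that I would settle row by row through explicit \Magma\ computation on the concrete reference curves. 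The entirely analogous statement, with $\Dic_{12}$ in place of $H_8$ and the roles of~$2$ and~$3$ exchanged, handles the rows with $v_3(N_E)=3$ via Theorems~\ref{T:inertiatypes}\eqref{inertiatypes-2} and~\ref{T:maincrit3}: symplectic when $(3/p)=1$, else reducible to a comparison of $5$-torsion modules.

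Finally I would combine the local verdicts row by row using Table~\ref{Tab:curves}, which identifies the reference curve~$E$ from the pair~$(i_2,i_3)$. For each fixed residue class of~$p$ modulo~$24$ the symbols $(2/p)$ and $(3/p)$ are determined, so each row yields either a definite symplectic verdict, a definite anti-symplectic verdict, or an inconsistency discarded at once. Anti-symplectic verdicts are absorbed whenever possible: the degree-$2$ isogenies $96a1\leftrightarrow 96a2$ and $288a1\leftrightarrow 288a2$ absorb a $(2/p)=-1$ sign, while degree-$3$ isogenies in the $27a$ and $54a$ isogeny classes absorb a $(3/p)=-1$ sign. This accounts for the appearances of $X_{96a2}(p)$, $X_{288a2}(p)$ and $X_{54a2}(p)$ in certain rows of the target table. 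By contrast, $864a1$, $864b1$ and $864c1$ admit no isogenous partner of degree coprime to~$p$ among twists of the seven reference curves, so simultaneous anti-symplectic verdicts at both~$\ell=2$ and~$\ell=3$ cannot be absorbed; this is precisely what forces the appearance of $X^-_{864a1}(p)$, $X^-_{864b1}(p)$ and $X^-_{864c1}(p)$ when both $(2/p)$ and $(3/p)$ equal~$-1$, namely for $p\equiv 5,19\pmod{24}$.

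The main obstacle is the honest case-by-case bookkeeping: for each valid $(i_2,i_3)$ combination and each of the eight residue classes modulo~$24$ one must choose the correct criterion (Theorem~\ref{T:KHtrick}, \ref{T:maincrit2}, or~\ref{T:maincrit3}), combine the $I_2$ and~$I_3$ verdicts consistently into a single global square-class decision, discard inconsistent combinations, and locate a sign-absorbing isogeny when the verdict is anti-symplectic. The delicate cases are the mixed potentially-good rows, where both Theorems~\ref{T:maincrit2} and~\ref{T:maincrit3} must be invoked simultaneously, and the $864$-rows, where the absence of isogeny partners forces the appearance of genuine $X_E^-(p)$ twists.
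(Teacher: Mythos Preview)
Your approach is correct and follows essentially the same strategy as the paper: combine Theorem~\ref{T:KHtrick} for the multiplicative-reduction cases ($54a1$ at $\ell=2$, $96a1$ at $\ell=3$) with parts~(1) of Theorems~\ref{T:maincrit2} and~\ref{T:maincrit3} for the potentially-good cases, then absorb anti-symplectic verdicts via isogenies where a partner is available. The paper organizes the argument curve-by-curve rather than row-by-row, and for $27a1$ uses a shortcut that you fold into your general isogeny-absorption mechanism: $27a3$ is simultaneously the $(-3)$-twist of and $3$-isogenous to $27a1$, so $X_{27a1}(p)\simeq X_{27a1}^-(p)$ whenever $(3/p)=-1$.

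One superfluous step in your plan: invoking parts~(2) of Theorems~\ref{T:maincrit2} and~\ref{T:maincrit3} (the explicit $3$- and $5$-torsion \Magma\ computations) is not needed for this theorem. The paper uses only parts~(1); the part-(2) computations are carried out afterwards, in Lemmas~\ref{L:fine2} and~\ref{L:fine3}, to prove that the table is \emph{optimal}, and what they reveal is that for the $864$ curves both symplectic types occur depending on the twist parameter~$d$. Hence no further twists are eliminated this way. Correspondingly, your trichotomy ``definite symplectic / definite anti-symplectic / inconsistent'' needs a fourth outcome, ``both types possible'', which is precisely what forces the $X_E^{\pm}$ pairs for the $864$ curves when $p\equiv 5,19\bmod 24$.
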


\begin{proof}
  Note that among the seven elliptic curves in Lemma~\ref{L:7curves}, $27a1$ and $288a1$ have complex
  multiplication by~$\Z[\omega]$ and~$\Z[i]$, respectively, where $\omega$ is
  a primitive cube root of unity.
  The isogeny classes of the first four curves in the list of
  Lemma~\ref{L:7curves} have the following structure (the edges are labeled by
  the degree of the isogeny):
  \[ \xymatrix@R=5pt{27a2 \ar@{-}[r]^{3} & 27a1 \ar@{-}[r]^{3} & 27a3 \ar@{-}[r]^{3} & 27a4 &
                     54a2 \ar@{-}[r]^{3} & 54a1 \ar@{-}[r]^{3} & 54a3 \\
                     & 96a2 \ar@{-}[dr]^{2} \\
                     & & 96a1 \ar@{-}[r]^{2} & 96a4 & 288a1 \ar@{-}[r]^{2} & 288a2 \\
                     & 96a3 \ar@{-}[ur]_{2}}
  \]
  whereas the isogeny classes of the last three curves are trivial;
  see~\cite{CremonaBook}*{Table~1} or~\cite{LMFDB}.
  Since $27a3$ is the quadratic twist by~$-3$ of~$27a1$, we have that
  $X_{27a1}(p) \simeq X_{27a3}(p)$. If $(3/p) = -1$, then the $3$-isogeny between
  these curves induces an anti-symplectic isomorphism of the mod~$p$ Galois representations,
  and we have that $X_{27a1}(p) \simeq X_{27a3}(p) \simeq X_{27a1}^-(p)$.
  So when $p \equiv 5, 7, 17, 19 \bmod 24$, we only have one twist of~$X(p)$
  coming from~$27a1$. (For the other CM curve~$288a1$, this argument does not
  apply, since it is its own $-1$-twist.)

  For the twists associated to the curves $54a1$ and $96a1$ we apply Theorem~\ref{T:KHtrick}.
  From Table~\ref{Tab:2adic} we see that the Frey curve $E^{(d)}_{(a,b,c)}$
  has multiplicative reduction at $\ell = 2$ if and only if $c$ is even and $d = \pm 1, \pm 3$,
  in which case its minimal discriminant is $\Delta = 2^{-6} 3^3 d^6 c^p$
  (compare the proof of~\cite{PSS2007}*{Lemma~4.6}); in particular, $v_2(\Delta) \equiv -6 \bmod p$.
  Then the Frey curve must be $p$-congruent to $E = 54a1$, which is the only
  curve in our list that has multiplicative reduction at~$2$.
  On the other hand, $\Delta_E = -2^3 3^9$, so that the isomorphism
  between $E^{(d)}_{(a,b,c)}[p]$ and~$E[p]$ is symplectic
  if and only if $(-2/p) = 1$. So for $p \equiv 1, 11, 17, 19 \bmod 24$,
  we get rational points at most on~$X_{54a1}(p)$, whereas for $p \equiv 5, 7, 13, 23 \bmod 24$,
  we get rational points at most on~$X^-_{54a1}(p)$ (which is $X_{54a2}(p)$ when $(3/p) = -1$).
  Similarly, Table~\ref{Tab:3adic} shows that the Frey curve has multiplicative reduction
  at $\ell = 3$ if and only if $c$ is divisible by~$3$. In this case $d = \pm 3, \pm 6$
  and the minimal discriminant is $\Delta = 2^6 3^{-3} c^p$ (see again the proof
  of~\cite{PSS2007}*{Lemma~4.6}), so $v_3(\Delta) \equiv -3 \bmod p$.
  Since $E = 96a1$ is the only curve in our list that has multiplicative reduction
  at~$3$, the Frey curve must be $p$-congruent to it. Since $\Delta_E = 2^6 3^2$,
  we find that the isomorphism between $E^{(d)}_{(a,b,c)}[p]$ and~$E[p]$ is symplectic
  if and only if $(-6/p) = 1$. So for \hbox{$p \equiv 1, 5, 7, 11 \bmod 24$} we get rational
  points at most on~$X_{96a1}(p)$, whereas for $p \equiv 13, 17, 19, 23 \bmod 24$, we get
  rational points at most on~$X^-_{96a1}(p)$ (which is $X_{96a2}$ when $(2/p) = -1$).

  Now we consider the curves~$E$ with conductor at~$2$ equal to $2^5$; these are
  $96a1$, $288a1$, $864a1$, $864b1$ and~$864c1$.
  They all have potentially good reduction at $2$ and $I = \Gal(L/\Q_2^\unr) \simeq H_8$.
  As explained at the beginning of section~\ref{S:theCriteria}, the fact that
  $H_8$ is non-abelian implies that the isomorphism of mod~$p$ Galois representations
  is symplectic if and only if it is symplectic on the level of inertia groups.
  It follows from Theorem~\ref{T:maincrit2}~(1) that in the case that $(2/p) = 1$
  the isomorphism $E^{(d)}_{(a,b,c)}[p] \simeq E[p]$ can only be symplectic.
  So when $p \equiv 1, 7, 17, 23 \bmod 24$, we can exclude the `minus' twists~$X^-_E(p)$
  for $E \in \{96a1, 288a1, 864a1, 864b1, 864c1\}$.

  We can use a similar argument over~$\Q_3$ for the curves~$E$ in our list
  whose conductor at~$3$ is~$3^3$, namely $27a1$, $54a1$, $864a1$, $864b1$
  and~$864c1$. They all have potentially good reduction and $I \simeq \Dic_{12}$.
  By Theorem~\ref{T:maincrit3}~(1) we conclude that the isomorphism
  $E^{(d)}_{(a,b,c)}[p] \simeq E[p]$ must be symplectic when $(3/p) = 1$.
  Thus we can exclude the twists $X^-_E(p)$ for $E$ in the set $\{27a1, 54a1, 864a1, 864b1, 864c1\}$
  when $p \equiv 1, 11, 13, 23 \bmod 24$.

  Finally, from the isogeny diagrams we see that $X_{96a2}(p) \simeq X_{96a1}^{-}(p)$
  and $X_{288a2}(p) \simeq X_{288a1}^{-}(p)$ when $(2/p)=-1$; and also
  $X_{54a2}(p) \simeq X_{54a1}^{-}(p)$ when $(3/p)=-1$.
  This concludes the proof.
\end{proof}

We have already observed that $X_E(p)$ for $E \in \{27a1, 288a1, 288a2, 864b1\}$ always
has a rational point coming from a primitive solution of~\eqref{E:GFE}, so these
twists cannot be excluded. In a similar way, we see that we cannot exclude $X_E(p)$
by local arguments over $\Q_\ell$ with $\ell = 2$ or~$3$, if $E/\Q_\ell$ can be obtained as the Frey curve
evaluated at an $\ell$-adically primitive solution of~\eqref{E:GFE}.
Note that, for $\ell \in \{2,3\}$ and $p \ge 5$,
any $\ell$-adic unit is a $p$-th power in~$\Q_\ell$.
For $\ell = 2$, we have the following triples $(a, b, E)$, where  $a, b \in \Z_2$
are coprime with $a^2 + b^3 \in \Z_2^\times$ and $E/\Q_2$ is the curve obtained as the
associated (local) Frey curve:
\[ (253, -40, 27a2), \quad (10, -7, 96a1), \quad (46, -13, 96a2), \quad (1, 2, 864c1) . \]
For $\ell = 3$, we only obtain $(13, 7, 54a1)$  and $(3, -1, 864a1)$.
The remaining combinations $(E, \ell)$, namely
\begin{gather*}
  (27a2, 3),\; (54a1, 2),\; (54a2, 2),\; (54a2, 3),\; (54a3, 2),\; (54a3, 3),\; (96a1, 3),\; \\
  (96a2, 3),\; (96a3, 2),\; (96a3, 3),\; (96a4, 2),\; (96a4, 3),\; (864a1, 2),\; (864c1, 3),
\end{gather*}
do not arise in this way. This can be verified by checking whether there is $d \in \Q_\ell^\times$
such that $a = c_6(E) d^3$ and $b = -c_4(E) d^2$ are coprime $\ell$-adic integers
such that $a^2 + b^3$ is an $\ell$-adic unit.

In the remainder of this section we will show that there are nevertheless always $2$-adic
and $3$-adic points corresponding to primitive solutions on the twists~$X^{\pm}_E(p)$
listed in Theorem~\ref{T:nominus}. This means that
Theorem~\ref{T:nominus} is the optimal result obtainable from local information
at $2$ and~$3$.

\begin{lemma} \label{L:fine2}
  Let $p \geq 3$ be a prime such that $(2/p) = -1$. Then, up to unramified quadratic twist,
  the $p$-torsion $G_{\Q_2}$-modules of the following curves
  admit exclusively the following isomorphism types:
  \[ 96a1 \stackrel{+}{\simeq} 864c1,\qquad 288a1 \stackrel{-}{\simeq} 864a1,\quad
     288a1 \stackrel{+}{\simeq} 864b1,\quad 864a1 \stackrel{-}{\simeq} 864b1,
  \]
  where $+$ means symplectic and $-$ anti-symplectic.
  Moreover, let $a, b$ be coprime integers satisfying the congruences in line~$i_2$
  of Table~\ref{Tab:2adic} and write $E = E_{(a,b,c)}^{(d)}/\Q_2$, where $d$ is any of
  the possible values in the same line.
  Then, up to unramified quadratic twist, the $p$-torsion $G_{\Q_2}$-modules of the following curves
  admit exclusively the following isomorphism types:
  \[ \begin{array}{l@{\qquad}l@{\quad}l@{\quad}l}
       \text{$i_2 = 2$ with $d = \pm 1, \pm 3$ or $i_2 = 6$} \colon {}
       & E \stackrel{+}{\simeq} 288a1, & E \stackrel{-}{\simeq} 864a1, & E \stackrel{+}{\simeq} 864b1 \\
       \text{$i_2 = 2$ with $d = \pm 2, \pm 6$} \colon {}
       & E \stackrel{-}{\simeq} 288a1, & E \stackrel{+}{\simeq} 864a1, & E \stackrel{-}{\simeq} 864b1 \\
       \text{$i_2 = 3$ with $d = \pm 1, \pm 3$ or $i_2 = 5$} \colon {}
       & E \stackrel{+}{\simeq} 96a1, & E \stackrel{+}{\simeq} 864c1 \\
       \text{$i_2 = 3$ with $d = \pm 2, \pm 6$} \colon {}
       & E \stackrel{-}{\simeq} 96a1, & E \stackrel{-}{\simeq} 864c1
     \end{array}
  \]
  Furthermore, if instead $p \geq 3$ satisfies $(2/p) = 1$,
  then all the previous isomorphisms are symplectic.
\end{lemma}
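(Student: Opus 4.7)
The strategy is to apply Theorem~\ref{T:maincrit2} to reduce all claims about $p$-torsion (when $(2/p) = -1$) to explicit computations on $3$-torsion, and to invoke part~(1) of that theorem when $(2/p) = 1$.

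First, I would sort the five fixed curves $96a1$, $288a1$, $864a1$, $864b1$, $864c1$ by inertial field at~$2$: from Theorem~\ref{T:inertiatypes} and the remarks after it, $96a1$ and $864c1$ share inertial field~$L_{2,96}$, while $288a1$, $864a1$ and~$864b1$ share~$L_{2,288}$, with $L_{2,96} \ne L_{2,288}$. Hence no $p$-torsion modules can be isomorphic across these two groups, which already accounts for the absence of any other pairings among the four listed isomorphisms. Within each group, Theorem~\ref{T:maincrit2} yields an $I_2$-module isomorphism on $p$-torsion; I would lift it to a $G_{\Q_2}$-module isomorphism by directly checking compatibility of the Frobenius actions (a finite computation over an unramified extension where all the curves acquire good reduction). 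To determine whether this isomorphism is symplectic or anti-symplectic when $(2/p) = -1$, I apply Theorem~\ref{T:maincrit2}(2), which says the answer coincides with that for $3$-torsion; the $3$-torsion $G_{\Q_2}$-modules can then be compared explicitly in~\Magma\ via the $3$-division polynomial, its splitting field and the Weil pairing. By Lemma~\ref{L:sympcriteria}, applied with the non-abelian image $H_8$, these isomorphisms cannot be simultaneously symplectic and anti-symplectic, so the ``exclusively'' clause is automatic.

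For the second half of the lemma, involving the twisted Frey curves $E = E_{(a,b,c)}^{(d)}$, the $I_2$-module structure of $E[p]$ is controlled by the row~$i_2$ of Table~\ref{Tab:2adic}, which fixes $v_2(N_E)$ and, via Theorem~\ref{T:inertiatypes}, pins down the inertial field as either $L_{2,96}$ or $L_{2,288}$. For each row with $v_2(N_E) = 5$, I would pick a numerical representative $(a_0, b_0, c_0)$ satisfying the listed congruences together with each possible value of~$d$, form the resulting curve $E^{(d)}_{(a_0, b_0, c_0)}$ over~$\Q_2$, and compute its $3$-torsion $G_{\Q_2}$-module in~\Magma; Theorem~\ref{T:maincrit2}(2) then transfers the verdict to all $p$ with $(2/p) = -1$. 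The flip of sym/antisym sign between $d \in \{\pm 1, \pm 3\}$ and $d \in \{\pm 2, \pm 6\}$ reflects that these sets lie in different cosets of $(\Q_2^\times)^2$ in~$\Q_2^\times$ (represented by $1$ and~$2$ respectively) and is detected by the same $3$-torsion computation.

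Finally, for $(2/p) = 1$, Theorem~\ref{T:maincrit2}(1) asserts that any $I_2$-module isomorphism between the $p$-torsion modules of two curves with inertia $\simeq H_8$ is symplectic; combined with Lemma~\ref{L:normalizer2}(a) (which ensures that Frobenius, lying in the normalizer $N_G(\psi(H_8))$, has square determinant), this forces every $G_{\Q_2}$-module isomorphism to be symplectic as well. The main obstacle will be the bookkeeping for the Frey curve family: the congruence-class parametrization must be reduced to a finite list of numerical representatives, and the effect of each twist~$d$ must be tracked carefully through the $3$-torsion computation before the overall picture stabilises into the clean table of signs given in the statement.
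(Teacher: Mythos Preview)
Your overall strategy matches the paper's: reduce the symplectic/anti-symplectic determination to $p=3$ via Theorem~\ref{T:maincrit2}(2), carry out the $3$-torsion comparison in \Magma, and invoke Theorem~\ref{T:maincrit2}(1) for $(2/p)=1$. The appeal to Lemma~\ref{L:sympcriteria} for the ``exclusively'' clause is also correct.

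The gap is in the passage from $I_2$-modules to $G_{\Q_2}$-modules. Your sentence ``directly checking compatibility of the Frobenius actions (a finite computation over an unramified extension where all the curves acquire good reduction)'' does not work as stated: the curves have inertia image~$H_8$, so there is no unramified extension over which they acquire good reduction. More substantively, even over the correct field $L_3 = \Q_2(E[3])$, equality of Frobenius traces does not by itself upgrade an inertial isomorphism to a $G_{\Q_2}$-isomorphism. The paper's argument has three pieces you are missing. First, all the curves in question acquire good reduction over~$L_3$ with common trace $a_{L_3}=-4$, and by \cite{Cent2015}*{Theorem~2} the matrix $\rhobar_{E,p}(\Frob_{L_3})$ is the \emph{scalar} $-2\cdot\mathrm{Id}$; since scalars commute with~$\rhobar_{E,p}(I_2)$, the isomorphism extends to $G_{U_3}=G_{L_3}\cdot I_2$, where $U_3$ is the maximal unramified subextension of~$L_3$. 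Second, using \cite{DDroot2}*{Table~1} one computes $\Gal(L_3/\Q_2)$ to be semi-dihedral of order~$16$, hence $[U_3:\Q_2]=2$; irreducibility then forces $\rhobar_{E,p}$ and~$\rhobar_{E',p}$ to agree on~$G_{\Q_2}$ up to the unramified quadratic character~$\chi$. Third, one checks on $3$-torsion whether the twist by~$\chi$ is actually needed: for the five fixed curves it never is, but for the Frey curves it sometimes is --- and this is exactly why the second block of the statement is asserted only ``up to quadratic twist'', a qualifier whose origin your proposal does not account for.
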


\begin{proof}
  Let $E$ and $E'$ be any choice of curves that are compared in the statement.
  We have seen in Section~\ref{S:loccon} that $E$ and~$E'$ have the same inertial field at~$2$.
  From Theorem~\ref{T:maincrit2} we know that there is an isomorphism of $I_2$-modules
  $\phi \colon E[p] \to E'[p]$. We can then use~\cite{FK16}*{Theorem~9} to decide
  whether this isomorphism is symplectic or anti-symplectic.
  We will now show that the $I_2$-module isomorphism between $E[p]$ and~$E'[p]$  extends to the whole of~$G_{\Q_2}$ up to unramified quadratic twist.

  Write $L_p = \Q_2(E[p])$ for the $p$-torsion field of $E$ and
  let $U_p$ be the maximal unramified extension of~$\Q_2$ contained in~$L_p$.
  Note that all the curves in the statement acquire good reduction over~$L_3$ and
  have trace of Frobenius $a_{L_3} = -4$ (this can be checked using \Magma).
  Therefore, we know that
  \[ \rhobar_{E,p}|_{I_2} \simeq \rhobar_{E',p}|_{I_2} \quad \text{ and } \quad
     \rhobar_{E,p}|_{G_{L_3}} \simeq \rhobar_{E',p}|_{G_{L_3}}.
  \]

  Note that $G_{U_3} = G_{L_3} \cdot I_2$.
  We now apply \cite{Cent2015}*{Theorem 2} to find that there is a basis in which
  $\rhobar_{E,p}(\Frob_{L_3})$ is the scalar matrix $-2 \cdot \text{Id}_2$.
  Thus the same is true in all bases; therefore $\rhobar_{E,p}(\Frob_{L_3})$
  commutes with all matrices in~$\rhobar_{E,p}(I_2)$.
  Since the same is true for~$E'$, the isomorphism between $\rhobar_{E,p}$ and~$\rhobar_{E',p}$
  on the subgroups $I_2$ and~$G_{L_3}$ extends to~$G_{U_3}$.

  Since all the curves involved have conductor~$2^5$, their discriminants are cubes in $\Q_2$.
  Thus, by~\cite{DDroot2}*{Table~1} we conclude that $\Gal(L_3/\Q_2)$ is isomorphic
  to the semi-dihedral group with 16 elements, hence $H_8 \simeq I \subset G_3$ with index 2,
  thus $[U_3 : \Q_2] = 2$. Therefore, because the representations $\rhobar_{E,p}$
  and~$\rhobar_{E',p}$ are irreducible, they differ at most by the
  quadratic character $\chi$ fixing~$U_3$, that is, we have
  \[ \rhobar_{E,p} \simeq \rhobar_{E',p} \quad \text{ or } \quad
     \rhobar_{E,p} \simeq \rhobar_{E',p} \otimes \chi.
  \]

  The last statement follows from Theorem~\ref{T:maincrit2}~(1).
\end{proof}

When $(E,E')$ is any of the pairs of curves in the first part of the statement
of Lemma~\ref{L:fine2}, the unramified quadratic twist is actually never necessary,
whereas it is necessary for some of the Frey curves in the second part.
This can be checked on the $3$-torsion by an explicit computation;
the result for arbitrary~$p$ follows from this.

Since the isomorphism class of~$X_E(p)$ (or~$X^-_E(p)$) depends only on the
symplectic Galois module~$E[p]$ up to quadratic twist,
Lemma~\ref{L:fine2} implies that over~$\Q_2$,
$X^{\pm}_{96a1}(p) \simeq X^{\pm}_{864c1}(p)$
and $X^{\pm}_{288a1}(p) \simeq X^{\pm}_{864b1}(p)$ (writing $X^+_E(p) = X_E(p)$),
and also that $X^\pm_{288a1}(p) \simeq X^{\pm}_{864a1}(p)$ when $(2/p) = 1$,
whereas $X^\pm_{288a1}(p) \simeq X^{\mp}_{864a1}(p)$ when $(2/p) = -1$.
In this latter case, the Frey curve gives rise to `primitive' $2$-adic points
on $X^-_E(p)$ for $E \in \{96a1, 288a1, 864a1, 864b1, 964c1\}$.

\begin{lemma} \label{L:fine3}
  Let $p \geq 5$ be a prime such that $(3/p) = -1$.
  Then, up to unramified quadratic twist, the $p$-torsion $G_{\Q_3}$-modules of the following curves
  admit exclusively the following isomorphism types:
  \[ 27a1 \stackrel{+}{\simeq} 864c1,\quad 27a1 \stackrel{-}{\simeq} 864b1,\quad
     864b1 \stackrel{-}{\simeq} 864c1,\qquad 54a1 \stackrel{-}{\simeq} 864a1,
  \]
  where $+$ means symplectic and $-$ anti-symplectic.
  Moreover, let $a,b$ be coprime integers satisfying the congruences in line~$i_3$ of Table~\ref{Tab:3adic}
  and write $E = E_{(a,b,c)}^{(d)}/\Q_3$, where $d$ is any of
  the possible values in the same line.
  Then, up to unramified quadratic twist, the $p$-torsion $G_{\Q_3}$-modules of the following curves
  admit exclusively the following isomorphism types:
  \[ \begin{array}{l@{\qquad}l@{\quad}l@{\quad}l}
       \text{$i_3 = 3$ or $5$ with $d = \pm 1, \pm 2$} \colon {}
       & E \stackrel{-}{\simeq} 27a1, & E \stackrel{+}{\simeq} 864b1, & E \stackrel{-}{\simeq} 864c1 \\
       \text{$i_3 = 3$ or $5$ with $d = \pm 3, \pm 6$} \colon {}
       & E \stackrel{+}{\simeq} 27a1, & E \stackrel{-}{\simeq} 864b1, & E \stackrel{+}{\simeq} 864c1 \\
       \text{$i_3 = 4$ or $7$ with $d = \pm 1, \pm 2$} \colon {}
       & E \stackrel{-}{\simeq} 54a1, & E \stackrel{+}{\simeq} 864a1 \\
       \text{$i_3 = 4$ or $7$ with $d = \pm 3, \pm 6$} \colon {}
       & E \stackrel{+}{\simeq} 54a1, & E \stackrel{-}{\simeq} 864a1
     \end{array}
  \]
  Furthermore, if instead $p \geq 5$ satisfies $(3/p) = 1$,
  then all the previous isomorphisms are symplectic.
\end{lemma}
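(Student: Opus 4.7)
The plan is to follow the argument of Lemma~\ref{L:fine2} almost verbatim, swapping the roles of the primes $2,3$ and of the auxiliary primes $3,5$ used to detect symplecticness. For each pair $(E,E')$ among those listed, Theorem~\ref{T:maincrit3} produces an isomorphism of $I_3$-modules $\phi\colon E[p]\to E'[p]$. When $(3/p)=1$, part~(1) of that theorem immediately gives that $\phi$ is symplectic, which handles the last sentence of the lemma. When $(3/p)=-1$, part~(2) reduces the symplectic-versus-anti-symplectic question to the analogous question for the 5-torsion. So the first step is, setting $p=5$, to pick symplectic bases of $E[5]$ and $E'[5]$, compute the matrix $M_\phi$ of Lemma~\ref{L:sympcriteria} with \Magma, and read off $\det M_\phi \bmod 5$; Lemma~\ref{L:sympcriteria} then gives the (anti)-symplectic type at the level of $I_3$.

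Next, I would promote this $I_3$-module statement to a $G_{\Q_3}$-module statement, possibly at the cost of an unramified quadratic twist. The five reference curves $27a1,54a1,864a1,864b1,864c1$ all have conductor exponent~$3$ at~$3$, acquire good reduction over a common inertial field~$L$ with $\Gal(L/\Q_3^{\unr})\simeq\Dic_{12}$, and likewise each local Frey curve $E=E^{(d)}_{(a,b,c)}/\Q_3$ that falls in one of the congruence classes of Table~\ref{Tab:3adic}. Setting $L_5=\Q_3(E[5])$ and letting $U_5\subset L_5$ be its maximal unramified subextension, one checks that the trace of Frobenius at~$L_5$ takes a common value across the curves under consideration. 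An application of \cite{Cent2015}*{Theorem~2} (or a direct variant) then forces $\rhobar_{E,5}(\Frob_{L_5})$ to be a scalar matrix in every basis, hence to commute with $\rhobar_{E,5}(I_3)$; combined with $G_{U_5}=G_{L_5}\cdot I_3$, this extends the $I_3$-isomorphism to~$G_{U_5}$. A root-number computation in the spirit of~\cite{DDroot2} identifies the full local Galois group $\Gal(L_5/\Q_3)$ and pins down $[U_5:\Q_3]=2$, so that by irreducibility either $\rhobar_{E,p}\simeq\rhobar_{E',p}$ or $\rhobar_{E,p}\simeq\rhobar_{E',p}\otimes\chi$, where $\chi$ is the unique quadratic character of $\Gal(U_5/\Q_3)$.

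To decide in each case whether the twist by $\chi$ is actually needed, I would run explicit \Magma\ comparisons of the 5-torsion representations. For the four pairs $(27a1,864c1)$, $(27a1,864b1)$, $(864b1,864c1)$, $(54a1,864a1)$ the expected outcome is that no twist is necessary, yielding precisely the signs asserted in the first part of the lemma. For the Frey curve $E^{(d)}_{(a,b,c)}$ versus each reference curve the twist may genuinely be needed for some $(a,b,d)$, which is exactly what the phrase ``up to quadratic twist'' in the second part of the statement allows; since only finitely many residue classes modulo a small power of~$3$ are involved, this is again a finite check. The main obstacle, as in Lemma~\ref{L:fine2}, is the middle step: correctly identifying the reference field $L_5$ at which Frobenius becomes scalar and verifying $[U_5:\Q_3]=2$ in the $\Dic_{12}$ setting, where the subgroup lattice differs from the $H_8$/semi-dihedral situation at $\ell=2$; once this is in place, the remaining calculations are mechanical.
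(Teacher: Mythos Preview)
Your overall strategy is the right one and closely tracks the paper's proof, but there is one concrete error that creates a real gap. You assert that $[U_5:\Q_3]=2$; in fact the paper computes $[U_5:\Q_3]=4$. In the $\Dic_{12}$ situation the unramified part of the $5$-torsion field of these curves over~$\Q_3$ has degree~$4$, not~$2$ (this is exactly the place you flagged as delicate, and your guess is off). Consequently, knowing only that $\rhobar_{E,p}$ and $\rhobar_{E',p}$ agree on $G_{U_5}$ leaves open, a priori, that they differ by a \emph{quartic} unramified character of $G_{\Q_3}$, not merely by the quadratic one, and your dichotomy ``either isomorphic or a single quadratic twist'' does not follow from irreducibility alone.

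The missing step, which the paper supplies, is to observe that both representations have the same determinant (the mod~$p$ cyclotomic character). If $\rhobar_{E,p}\simeq\rhobar_{E',p}\otimes\chi$ for some unramified character~$\chi$ of order dividing~$4$, then taking determinants forces $\chi^2=1$, so $\chi$ is at worst quadratic. With this in hand the rest of your plan goes through: the trace of Frobenius over~$L_5$ is indeed common to all the curves (the paper records $a_{L_5}=-18$), the scalar-Frobenius argument via \cite{Cent2015}*{Theorem~2} extends the $I_3$-isomorphism to~$G_{U_5}$, and the explicit $5$-torsion comparisons in \Magma\ finish the job, with the caveat (which you correctly anticipate) that for the Frey curves the unramified quadratic twist is sometimes genuinely required.
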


\begin{proof}
  The proof proceeds in the same way as for the previous lemma, replacing $2$ and~$3$ by $3$ and~$5$,
  respectively. We now use Theorem~11 of~\cite{FK16} to obtain the result on the level of inertia.
  We then see that $G_{U_3} = G_{L_5} \cdot I_3$
  and that all the curves in the statement acquire good reduction over~$L_5$ and have
  trace of Frobenius $a_{L_5} = -18$. We conclude as before that $\rhobar_{E,p}$
  and~$\rhobar_{E',p}$ are isomorphic when restricted to~$G_{U_5}$.
  In the present case we have $[U_5 : \Q_3] = 4$, so it a priori conceivable
  that the representations differ by an unramified quartic twist. However, this is not possible,
  because both representations have the same determinant. We conclude that
  they differ at most by an unramified quadratic twist.
\end{proof}

The statements of this lemma can be translated in terms of isomorphisms
over~$\Q_3$ and `primitive' $\Q_3$-points in the same way as for the previous lemma.

These results already show that all the curves $X^{\pm}_E(p)$ listed
in Theorem~\ref{T:nominus} have `primitive' $2$-adic
and $3$-adic points (and therefore cannot be ruled out by local considerations
at $2$ and~$3$), with the possible exception of $2$-adic points on~$X^{\pm}_{54a1}(p)$
and $3$-adic points on~$X^{\pm}_{96a1}(p)$. The next proposition and corollary
show that these curves also have these local `primitive' points. This then implies
that the information in Theorem~\ref{T:nominus} is optimal in the sense that we
cannot exclude more of the twists using purely $2$-adic and $3$-adic arguments.
Note that in some cases it is possible to use local arguments at other primes
to rule out some further twists. For example, in~\cite{FK16}*{Section~32} it is
shown that the twist $X^-_{864a1}(p)$ can be excluded for $p = 19$ and~$43$
and that $X^-_{864b1}(p)$ can be excluded for $p = 19$, $43$ and~$67$,
working at a suitable prime of good reduction.

\begin{proposition} \label{P:Tate}
  Let $\ell \ne p$ be primes with $p \ge 3$ and $\ell \not\equiv 1 \bmod p$.
  Let $E_1$ and $E_2$ be Tate curves over~$\Q_\ell$ with Tate parameters $q_1$ and~$q_2$.
  Write $e_1 = v_\ell(q_1)$ and $e_2 = v_\ell(q_2)$ and suppose that $p \nmid e_1 e_2$.

  Then $E_1[p]$ and $E_2[p]$  are isomorphic $G_{\Q_\ell}$-modules.
\end{proposition}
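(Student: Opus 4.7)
The plan is to exploit the Tate uniformization, which presents $E_q[p]$ as an extension of $\F_p$ by $\mu_p$ whose isomorphism class is controlled by Kummer theory, and then to analyze that extension under the arithmetic hypothesis $\ell \not\equiv 1 \bmod p$.

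First I would recall that for a Tate curve $E_q$ over $\Q_\ell$ one has $E_q(\overline{\Q_\ell}) \simeq \overline{\Q_\ell}^\times/q^{\Z}$ as $G_{\Q_\ell}$-modules, so that $E_q[p]$ is generated by $\zeta_p$ and a fixed $p$-th root $q^{1/p}$, and sits in a short exact sequence of $G_{\Q_\ell}$-modules
\[
  0 \To \mu_p \To E_q[p] \To \F_p \To 0
\]
whose class in $H^1(G_{\Q_\ell},\mu_p) \simeq \Q_\ell^\times/(\Q_\ell^\times)^p$ is the Kummer class of~$q$. Under the assumption $\ell \not\equiv 1 \bmod p$, the mod~$p$ cyclotomic character is non-trivial on~$G_{\Q_\ell}$, so the characters $\chi$ and $1$ appearing on the diagonal are distinct. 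Consequently $\mu_p$ is the unique $G_{\Q_\ell}$-stable line in $E_q[p]$ (this is the key step that forces any $G_{\Q_\ell}$-isomorphism $E_1[p]\simeq E_2[p]$ to respect the filtrations and hence to be determined by its actions on the subspace and the quotient).

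Next I would compute $\Q_\ell^\times/(\Q_\ell^\times)^p$ explicitly. Using $\Q_\ell^\times \simeq \ell^{\Z}\times \calO_{\Q_\ell}^\times$ together with the fact that $\calO_{\Q_\ell}^\times$ is $p$-divisible modulo its torsion subgroup $\mu_{\ell-1}$, the hypothesis $\ell \not\equiv 1 \bmod p$ yields $\Q_\ell^\times/(\Q_\ell^\times)^p \simeq \F_p$, with an element represented by~$q$ corresponding to $v_\ell(q) \bmod p$. Hence the classes attached to $q_1$ and $q_2$ are $e_1$ and $e_2$ in $\F_p$, and both are non-zero by the hypothesis $p\nmid e_1 e_2$.

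Finally I would show that any two non-split extensions of $\F_p$ by $\mu_p$ are isomorphic as $G_{\Q_\ell}$-modules. The group $\Aut(\mu_p)\times\Aut(\F_p) \simeq \F_p^\times\times\F_p^\times$ acts on $H^1(G_{\Q_\ell},\mu_p)\simeq\F_p$ by multiplication by~$a b^{-1}$, and two extension classes give isomorphic modules precisely when they lie in the same $\F_p^\times$-orbit. Since $\F_p^\times$ acts transitively on $\F_p\setminus\{0\}$, the non-zero classes $e_1$ and $e_2$ lie in the same orbit, and the desired $G_{\Q_\ell}$-module isomorphism $E_1[p]\simeq E_2[p]$ follows. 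The only step requiring real care is the uniqueness of the $\mu_p$-sub: once that is in hand, everything is formal Kummer theory.
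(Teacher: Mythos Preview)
Your proof is correct and uses the same underlying ingredients as the paper's---the Tate parametrization and the fact that every unit in~$\Q_\ell^\times$ is a $p$th power when $\ell \not\equiv 1 \bmod p$---but packages them more abstractly. The paper proceeds by a direct construction: it chooses $n$ with $e_2 \equiv n e_1 \bmod p$, shows that $q_2/q_1^n$ is a $p$th power in~$\Q_\ell^\times$, picks compatible $p$th roots $\gamma_i$ of~$q_i$, and then verifies by hand that the map given by the matrix $\begin{pmatrix} n & 0 \\ 0 & 1 \end{pmatrix}$ in the bases $(\zeta, \gamma_i)$ is $G_{\Q_\ell}$-equivariant. Your argument is the cohomological shadow of exactly this: the paper's integer~$n$ is your element of $\Aut(\mu_p)$ carrying the Kummer class~$e_1$ to~$e_2$, and the explicit $p$th root extraction is your computation that $\Q_\ell^\times/(\Q_\ell^\times)^p \simeq \F_p$ via the valuation.

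One minor remark: your emphasis on the uniqueness of the $\mu_p$-subline is slightly misplaced. That uniqueness (which indeed holds once you know the extensions are non-split) would be needed for the converse implication ``isomorphic modules $\Rightarrow$ same $\F_p^\times$-orbit of extension classes,'' but the direction you actually need, ``same orbit $\Rightarrow$ isomorphic modules,'' is automatic from the functoriality of Ext and requires no such uniqueness. So the proof is a touch shorter than you suggest. The paper's explicit matrix, on the other hand, has the minor advantage that one can read off the determinant~$n$ directly, which is relevant if one wanted to track the symplectic type of the isomorphism (as is done elsewhere in the paper, though not for this proposition).
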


\begin{proof}
  Fix a primitive $p$th root of unity~$\zeta \in \Qbar_\ell$.
  Since $p \nmid e_1 e_2$, we can find integers $n$ and~$m$ (with $p \nmid n$) satisfying
  $e_2 = n e_1 + p m$. Write $a = q_2/q_1^n \ell^{mp}$; then $a$ is a unit in~$\Q_\ell$.
  Since by assumption $p \neq \ell$ and $\ell \not\equiv 1 \bmod p$, every unit is a $p$th power,
  hence there is $\alpha \in \Q_\ell$ satisfying $\alpha^p = a$.
  Thus $q_2 = q_1^n (\ell^{m} \alpha)^p$ with $p \nmid n$.
  Fix $\gamma_1 \in \Qbar_\ell$ with $\gamma_1^p = q_1$. Setting
  $\gamma_2 = \gamma_1^n \ell^m \alpha$, we have $\gamma_2^p = q_2$.
  By the theory of the Tate curve, we can use $(\zeta q_i^\Z, \gamma_i q_i^\Z)$
  as an $\F_p$-basis for the $p$-torsion of~$E_i$. We claim that the isomorphism
  $\phi \colon E_1[p] \to E_2[p]$ of $\F_p$-vector spaces given by the matrix
  \[ \begin{pmatrix} n & 0 \\ 0 & 1 \end{pmatrix} \in \GL_2(\F_p) \]
  with respect to these bases is actually an isomorphism of $G_{\Q_\ell}$-modules.
  To see this, consider $\sigma \in G_{\Q_\ell}$. Then $\sigma(\zeta) = \zeta^r$
  for some $r \in \F_p^\times$ and $\sigma(\gamma_1) = \zeta^s \gamma_1$ for
  some $s \in \F_p$, which implies that $\sigma(\gamma_2) = \zeta^{ns} \gamma_2$. We then have
  \[ \phi(\sigma(\zeta q_1^{\Z})) = \phi(\zeta^r q_1^{\Z}) = \zeta^{nr} q_2^{\Z}
      = \sigma(\zeta^n q_2^{\Z}) = \sigma(\phi(\zeta q_1^{\Z}))
  \]
  and
  \[ \phi(\sigma(\gamma_1 q_1^{\Z})) = \phi(\zeta^s \gamma_1 q_1^{\Z}))
      = \zeta^{ns} \gamma_2 q_2^{\Z} = \sigma(\gamma_2 q_2^{\Z}) = \sigma(\phi(\gamma_1 q_1^{\Z}))
  \]
  as desired.
\end{proof}

\begin{corollary}
  For $p \geq 5$, there are primitive $2$-adic points on $X^\pm_{54a1}(p)$
  and primitive $3$-adic points on~$X^{\pm}_{96a1}(p)$.
  The signs~$\pm$ here are as given by the entries in Table~\ref{Table-twist}
  (which for the curves considered here summarizes Theorem~\ref{T:nominus}).
\end{corollary}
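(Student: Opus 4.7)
The plan is to apply Proposition~\ref{P:Tate} to an explicitly chosen local solution with $\ell \mid c$, for $\ell \in \{2,3\}$ respectively. In each case I will produce a coprime triple in~$\Z_\ell$ whose associated twisted Frey curve has multiplicative reduction at~$\ell$; Proposition~\ref{P:Tate} then yields an isomorphism of $p$-torsion $G_{\Q_\ell}$-modules with the target curve~$E$, and the ratio of valuations of the Tate parameters determines whether this isomorphism is symplectic or anti-symplectic.

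For $\ell = 2$ I take the triple $(a,-1,2)$ with $a \in \Z_2^\times$ satisfying $a^2 = 2^p + 1$; such $a$ exists because $2^p + 1 \equiv 1 \pmod 8$ is a square in~$\Z_2^\times$. The triple is non-trivial, $2$-adically primitive, and matches line $i_2 = 1$ of Table~\ref{Tab:2adic}. For a suitable $d \in \{\pm 1,\pm 3\}$ the twisted Frey curve $E_1 = E^{(d)}_{(a,-1,2)}$ is a Tate curve over~$\Q_2$ with $v_2(\Delta_{E_1}) \equiv -6 \pmod p$, while $E_2 = 54a1$ is a Tate curve with $v_2(\Delta_{E_2}) = 3$. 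Since $p \ge 5$ we have $p \nmid 18$ and $2 \not\equiv 1 \pmod p$, so Proposition~\ref{P:Tate} yields an isomorphism $E_1[p] \simeq E_2[p]$ of $G_{\Q_2}$-modules; reading off the diagonal matrix $\mathrm{diag}(n,1)$ constructed in its proof, the Weil pairing is scaled by $n \equiv 3/(-6) \equiv -1/2 \pmod p$, an element of the square class of~$-2 \in \F_p^\times$. Hence the isomorphism is symplectic iff $(-2/p) = 1$, which is precisely the sign assigned to $X^{\pm}_{54a1}(p)$ in Table~\ref{Table-twist}.

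The $3$-adic case is completely parallel: take $(a,-1,3)$ with $a \in \Z_3^\times$ satisfying $a^2 = 3^p + 1$, which lies in $\Z_3^{\times 2}$ because $3^p + 1 \equiv 1 \pmod 3$ and squares in~$\Z_3^\times$ are exactly the units congruent to~$1 \bmod 3$. This triple is $3$-adically primitive and matches line $i_3 = 1$ of Table~\ref{Tab:3adic}. After a suitable twist the Frey curve becomes a Tate curve at~$3$ with $v_3(\Delta) \equiv -3 \pmod p$, and $v_3(\Delta_{96a1}) = 2$. Proposition~\ref{P:Tate} applies (using $p \ge 5$, so $p \nmid 6$ and $3 \not\equiv 1 \pmod p$), and the scaling on Weil pairings lies in the square class of $-3/2 \equiv -6 \pmod{\F_p^{\times 2}}$. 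The isomorphism is therefore symplectic iff $(-6/p) = 1$, once more matching the sign attached to $X^{\pm}_{96a1}(p)$ in Table~\ref{Table-twist}.

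The main (and essentially only non-mechanical) obstacle is the verification that the criteria $(-2/p) = \pm 1$ and $(-6/p) = \pm 1$ coincide, residue class by residue class $\bmod 24$, with the signs recorded in Theorem~\ref{T:nominus}. This is a routine quadratic-reciprocity check across the eight residue classes, and no further ideas are required beyond what is already used in the proof of Theorem~\ref{T:nominus} itself.
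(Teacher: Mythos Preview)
Your proof is correct and follows essentially the same approach as the paper: both construct an explicit $\ell$-adically primitive solution with $\ell \mid c$ so that the twisted Frey curve and the target curve have multiplicative reduction, then apply Proposition~\ref{P:Tate} (after an unramified quadratic twist to make both curves genuine Tate curves, which does not affect~$X_E^\pm(p)$). The paper simply defers the sign determination to Table~\ref{Table-twist} (i.e., to the Kraus--Oesterl\'e computation already carried out in Theorem~\ref{T:nominus}), whereas you recompute it directly from the determinant of the matrix in the proof of Proposition~\ref{P:Tate}; these are the same calculation, yielding the square classes of~$-2$ and~$-6$ respectively.
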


\begin{proof}
  Note that $2 \not\equiv 1 \bmod p$ and $3 \not\equiv 1 \bmod p$, so
  Proposition~\ref{P:Tate} applies for $\ell = 2$ and~$\ell = 3$.

  Let $W$ denote the curve $54a1$.
  From Table~\ref{Tab:2adic} we see that for the Frey curve $E = E_{a,b,c}$ to be
  $p$-congruent to $W$ we must have $v_2(c) > 0$ and $v_2(a) = \upsilon_2(b)= 0$.
  Note that we can always find $a, b, c \in \Q_2$ satisfying the previous conditions and
  $a^2 + b^3 = c^p$.

  Up to unramified quadratic twist the curves $W/\Q_2$ and $E/\Q_2$ are Tate curves
  with parameters $q_W$ and $q_E$ respectively. We have
  $v_2(q_W) = v_2(\Delta_W) = 3$ and $v_2(q_E) = -v_2(j_E)= -6 + p v_2(c)$.

  Since $p \ne 3$, from the previous proposition we conclude
  that (up to quadratic twist) $E[p]$ and~$W[p]$ are isomorphic $G_{\Q_2}$-modules.
  Therefore we get $2$-adic points on~$X_{54a1}^+(p)$ or~$X_{54a1}^-(p)$ according
  to the signs in Table~\ref{Table-twist}.

  For the curve $96a1$ we argue in the same way, but over $\Q_3$ instead of~$\Q_2$.
\end{proof}

A \Magma\ script that performs the necessary computations for the results
in this section is available as~\texttt{section5.magma} at~\cite{programs}.


\section{Ruling out twists coming from CM curves}

In~\cite{BPR2013}*{Corollary~1.2} it is shown that for $p \ge 11$, $p \neq 13$,
the image of the mod~$p$ Galois representation of any elliptic curve~$E$ over~$\Q$
is never contained in the normalizer of a split Cartan subgroup unless $E$ has
complex multiplication. This allows us to deduce the following.

\begin{lemma} \label{L:CM}
  Let $p \ge 17$ be a prime number.
  \begin{enumerate}[\upshape(1)]
    \item If $p \equiv 1 \bmod 3$, then the only primitive solutions of~\eqref{E:GFE}
          coming from rational points on~$X^{\pm}_{27a1}(p)$ are the trivial
          solutions $(\pm 1)^2 + 0^3 = 1^p$.
    \item If $p \equiv 1 \bmod 4$, then the only primitive solutions of~\eqref{E:GFE}
          coming from rational points on~$X^{\pm}_{288a1}(p)$ are the trivial
          solutions $0^2 + (\pm 1)^3 = (\pm 1)^p$ (with the same sign on both sides).
  \end{enumerate}
\end{lemma}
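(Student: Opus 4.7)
The plan is to apply the Bilu--Parent--Rebolledo theorem to force the twisted Frey curve to have complex multiplication, and then to enumerate the finitely many CM possibilities. For case~(1), suppose $(a,b,c)$ is a primitive solution with $c \neq 0$ giving rise to a rational point on $X^{\pm}_{27a1}(p)$. Then there exists $d \in \{\pm 1, \pm 2, \pm 3, \pm 6\}$ with $E' = E^{(d)}_{(a,b,c)}$ satisfying $E'[p] \simeq 27a1[p]$ as $G_\Q$-modules. Since $27a1$ has CM by $\Z[\omega]$ and $p \equiv 1 \pmod{3}$, the prime $p$ splits in $\Z[\omega]$, so the image of $\rhobar_{27a1,p}$ is contained in the normalizer of a split Cartan subgroup of $\GL_2(\F_p)$. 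The same then holds for $\rhobar_{E',p}$, and \cite{BPR2013}*{Corollary~1.2}, applicable since $p \geq 17$, forces $E'$ to have complex multiplication. Hence $j(E') = 12^3 b^3/c^p$ is one of the thirteen integral CM $j$-invariants over~$\Q$.

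Next I exploit the Frey shape. Since $\gcd(b,c) = 1$ and hence $\gcd(b^3, c^p) = 1$, the denominator of $12^3 b^3/c^p$ in lowest terms divides $c^p$; for $j(E') \in \Z$ this demands $c^p \mid 12^3 = 2^6 \cdot 3^3$, which forces $c \in \{\pm 1\}$ as $p \geq 17$. Writing $c^p = c$, we need $b^3 = c \cdot j(E')/12^3 \in \Z$. A direct check of the thirteen CM $j$-invariants shows that $j_0/12^3$ is a cube in $\Z$ only for the $j$-invariants arising from CM discriminants $-3, -4, -19, -43, -67, -163$, namely $j_0 \in \{0,\, 12^3,\, -96^3,\, -960^3,\, -5280^3,\, -640320^3\}$, giving $|b| \in \{0, 1, 8, 80, 440, 53360\}$. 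For each such pair $(b, c)$ one asks whether $a^2 = c - b^3$ has an integer solution; the finite lists of integer points on the Mordell curves $y^2 = x^3 \pm 1$ (which have finite Mordell--Weil groups, already invoked in the proof of Proposition~\ref{P:irred}) leave only $b \in \{-1, 0, 1\}$, yielding exactly the three primitive solutions $(\pm 1, 0, 1)$, $(0, 1, 1)$, and $(0, -1, -1)$.

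To conclude, the local conditions of Table~\ref{Tab:2adic} separate these three among the twists. The row $i_2 = 4$ for~$27a1$ requires $a \equiv \pm 1 \pmod{4}$, selecting only $(\pm 1, 0, 1)$; the other two have $a = 0$, so their twisted Frey curves satisfy $v_2(N_{E'}) = 5$ (from row $i_2 = 2$) and cannot be $p$-congruent to the curve~$27a1$, which has good reduction at~$2$. This proves~(1). Part~(2) runs identically with $27a1$, $\Z[\omega]$ and $p \equiv 1 \pmod{3}$ replaced by $288a1$, $\Z[i]$ and $p \equiv 1 \pmod{4}$; the same three candidates arise from the same enumeration, and the local conditions (now rows $i_2 \in \{2, 6\}$ attached to~$288a1$) pick out $(0, \pm 1, \pm 1)$ instead. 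The conceptual heart of the proof is the BPR appeal; the main technical work is the routine but careful enumeration showing that exactly six of the thirteen CM $j$-invariants have the form $\pm (12m)^3$ and that none of $b \in \{\pm 8, \pm 80, \pm 440, \pm 53360\}$ makes $\pm 1 - b^3$ a square, the latter being immediate from the known integer points on $y^2 = x^3 \pm 1$.
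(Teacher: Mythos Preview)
Your argument is correct and follows essentially the same route as the paper: invoke Bilu--Parent--Rebolledo to force the twisted Frey curve to have CM (hence integral $j$-invariant), deduce $c = \pm 1$, and then reduce to the known integral points on $y^2 = x^3 \pm 1$. The paper is terser at the endgame---it simply notes that the Catalan Frey curve (up to twist, $864b1$) has no CM, so among the solutions with $c = \pm 1$ only the trivial ones survive, and then that the relevant trivial solution is the one attached to the given CM curve. Your explicit enumeration of the six CM $j$-invariants of the form $\pm(12m)^3$ achieves the same exclusion of the Catalan solution by a slightly different bookkeeping (the Catalan $b = -2$ never appears on your list), and your appeal to Table~\ref{Tab:2adic} makes explicit what the paper leaves as ``only the given solution corresponds to the right curve~$E$''. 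One minor redundancy: once you know $c = \pm 1$, the Mordell-curve step alone already pins down $b \in \{-2,-1,0,1\}$, so the CM $j$-invariant enumeration is not strictly needed---but it does no harm and cleanly sidesteps having to argue separately that $864b1$ is non-CM.
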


\begin{proof}
  If a primitive solution $(a,b,c)$ gives rise to a Frey curve~$E'$ such that
  $E'[p] \cong E[p]$ for $E = 27a1$, then the image of Galois in $\GL(E'[p]) \cong \GL(E[p])$
  is contained in the normalizer of a split Cartan subgroup, since $E$ has complex
  multiplication by~$\Z[\omega]$ and $p$ splits in this ring when $p \equiv 1 \bmod 3$.
  It follows that $E'$ also has complex multiplication, which implies that $c = \pm 1$.
  Since the Frey curve of the Catalan solution does not have CM, the solution must
  be trivial, and then only the given solution corresponds to the right curve~$E$.
  The other case is similar, using the fact that $288a1$ has CM by~$\Z[i]$.
\end{proof}

A separate computation for the case $p = 13$, see Lemma~\ref{L:13noCM} below,
shows that Lemma~\ref{L:CM} remains valid in that case, even though the result
of~\cite{BPR2013} does not apply.

We can therefore further reduce the list of twists of~$X(p)$ that have to be
considered. This results in Table~\ref{Table-twist}, where an entry `$+$' (resp., `$-$')
indicates that the twist $X_E(p)$ (resp., $X^-_E(p)$) cannot (so far) be ruled out to
have rational points giving rise to a non-trivial primitive solution of~\eqref{E:GFE}.

\begin{table}[htb]
\[ \begin{array}{|r||c|c|c|c|c|c|c|} \hline
     p \bmod 24 & 27a1 & 54a1 & 96a1 & 288a1 & 864a1 & 864b1 & 864c1 \text{\large\strut}\\\hline
              1 &      &   +  &   +  &       &   +   &   +   &   +   \\
              5 &   +  &   -  &   +  &       &  + -  &  + -  &  + -  \\
              7 &      &   -  &   +  &   +   &   +   &   +   &   +   \\
             11 &   +  &   +  &   +  &  + -  &   +   &   +   &   +   \\
             13 &      &      &   -  &       &   +   &   +   &   +   \\
             17 &   +  &   +  &      &       &   +   &   +   &   +   \\
             19 &      &   +  &   -  &  + -  &  + -  &  + -  &  + -  \\
             23 &   +  &      &      &   +   &   +   &   +   &   +   \\\hline
   \end{array}
\]
\caption{Twists of~$X(p)$ remaining after local considerations and using
         information on $X_{\spl}^+(p)$, according to $p \bmod 24$.
         This table is valid for $p \ge 11$.}
\label{Table-twist}
\end{table}

Unfortunately, there is no similar result on mod~$p$ Galois representations whose image
is contained in the normalizer of a non-split Cartan subgroup. Such a result would allow
us to eliminate the curves $27a1$ and~$288a1$ also in the remaining cases.

In Section~\ref{SS:CM11} below we show how one can deal with the non-split
case when $p = 11$, by considering the twists $X^{(-1)}_\ns(p)$ and~$X^{(-3)}_\ns(p)$
of the double cover $X_\ns(p) \to X_\ns^+(p)$, where $X_\ns(p)$ classifies
elliptic curves such that the image of the mod~$p$ Galois representation
is contained in a non-split Cartan subgroup and $X_\ns^+(p)$ does the same
for the normalizer of a non-split Cartan subgroup. It turns out that
for $p = 11$ the two curves $X^{(-1)}_\ns(11)$ and~$X^{(-3)}_\ns(11)$ are
not directly amenable to a Chabauty argument; instead one can use suitable
coverings and Elliptic Curve Chabauty. The following argument shows that
the failure of the Chabauty condition is a general phenomenon.

By a result of Chen~\cite{Chen} (see also~\cite{Smit_Edixhoven}) the Jacobian
variety~$J_{0}(p^2)$ of~$X_{0}(p^2)$ is isogenous to the
product $\Jac(X_\ns(p))\times \Jac(X_{0}(p))^2$.
On the other hand, a theorem of Shimura~\cite{Shimura}*{Thm.~7.14} implies
that $J_{0}(p^2)$ is isogenous to the product $\prod_{f} A_{f}^{m_{f}}$,
where $f$ runs over a system of representatives of the Galois orbits of newforms
of  weight~$2$ and level~$M_{f}$ dividing~$p^2$,
$A_f$ is the abelian variety over~$\Q$ associated to~$f$ defined by Shimura,
and $p^{3-m_f} = M_{f}$.
It follows that $\Jac(X_\ns(p))$ is isogenous to the product of the~$A_{f}$
such that $f$ is a newform in~$S_{2}(\Gamma_{0}(p^2))$.
Similarly, the Jacobian of $X_{\ns}^+(p)$ corresponds to the product of the~$A_{f}$
for the subset of~$f$ invariant under the Atkin-Lehner involution~$W$ at level~$p^2$.

If $p \equiv -1 \bmod 4$, we need to exclude rational points on the twists $X_{288a1}^\pm(p)$;
solutions associated to this curve will give rise to rational points on the
$(-1)$-twist~$X_\ns^{(-1)}(p)$ of the double cover $X_\ns(p) \to X_\ns^{+}(p)$.
Similarly, for $p \equiv -1 \bmod 3$, we need to exclude rational points on the
twist $X_{27a1}(p)$, and solutions associated to that curve will give rise to rational
points on~$X_\ns^{(-3)}(p)$. To be able to use Chabauty's method, we would need
to have a factor of the Jacobian~$J_\ns^{(d)}(p)$ (for $d = -1$ and/or $d = -3$) of
Mordell-Weil rank strictly less than its dimension. Since all these factors
have real multiplication (defined over~$\Q$), the Mordell-Weil rank is always
a multiple of the dimension, so we actually need a factor of rank zero.

By the above, we know that $J_\ns^{(d)}(p)$ splits up to isogeny as the product
of the twists~$A_f^{(d)}$ for newforms~$f$ such that $f|_W = -f$ and the
untwisted~$A_f$ for $f$ such that $f|_W = f$. The $L$-series of~$A_f^{(d)}$
is the product of $L({}^\sigma \!f_\chi, s)$, where ${}^\sigma \!f$ runs through
the newforms in the Galois orbit and $\chi$ is the quadratic character
associated to~$d$, see~\cite{Shimura}*{Section 7.5}.
By a theorem of Weil~\cite{Weil1967}*{Satz~1} all these
$L$-series have root number~$-1$ when $f|_W = -f$ and $d < 0$ is squarefree
(note that $C = 1$ from $f|_W = -f$, $\eps$ is trivial, $\chi$ is real,
so $g(\chi) = g(\bar{\chi})$, and $A = p^2$, so that $\chi(-A) = \chi(-1) = -1$),
so $L(A_f^{(d)}, s)$ vanishes at least to
order~$\dim A_f^{(d)}$ at~$s = 1$. For the $f$ that are invariant under~$W$
we also have that the root number of~$L({}^\sigma\! f, s)$ is~$-1$, so
$L(A_f, s)$ also vanishes to order at least~$\dim A_f$.
Assuming the Birch and Swinnerton-Dyer conjecture, it follows that all factors
of~$J_\ns^{(d)}(p)$ have positive rank.

To conclude this section, we mention that when $p = 13$, we are in the split case
for both CM curves. During the work on this paper, it was an open question whether
the set of rational points on~$X^+_\spl(13)$ consists of cusps and CM~points.
(The curve is of genus~$3$ and its Jacobian has Mordell-Weil rank~$3$,
see~\cite{Baran2014} and~\cite{BPS}.) We tried an approach
similar to that used in Section~\ref{SS:CM11} below, but did not succeed.
However, a different approach using twists of~$X_1(13)$ is successful;
see Lemma~\ref{L:13noCM}.

After our work was finished, Steffen M\"uller announced at a workshop in Banff
that in joint work with Balakrishnan, Dogra, Tuitman and Vonk the set
$X^+_\ns(13)(\Q)$ could be determined using `Quadratic Chabauty'
techniques. This work has now appeared as~\cite{BDMTV}.
Since there is an accidental isomorphism $X^+_\ns(13) \simeq X^+_\spl(13)$
and there are no unexpected points, this gives another proof of Lemma~\ref{L:13noCM}.


\section{The Generalized Fermat Equation with exponents 2, 3, 11} \label{S:p=11}

We now consider the case $p = 11$.
In this section we will prove the following theorem.

\begin{theorem} \label{T:11}
  Assume the Generalized Riemann Hypothesis. Then the only primitive integral
  solutions of the equation $x^2 + y^3 = z^{11}$ are the trivial
  solutions $(\pm 1, 0, 1)$, $\pm(0, 1, 1)$, $(\pm 1, -1, 0)$ and the
  Catalan solutions $(\pm 3, -2, 1)$.
\end{theorem}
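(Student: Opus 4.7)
The plan follows the three-step blueprint sketched in the introduction. Fix a putative non-trivial primitive solution $(a,b,c)$ that is not one of the Catalan solutions $(\pm 3, -2, 1)$, and consider its Frey curve $E_{(a,b,c)}$. By Proposition~\ref{P:irred}, $\rhobar_{E_{(a,b,c)},11}$ is irreducible, so Lemma~\ref{L:7curves} applies. Combined with the $2$- and $3$-adic analysis of Theorem~\ref{T:nominus} specialised to the residue class $11 \bmod 24$, the putative solution produces a rational point on one of the eight twists of~$X(11)$ listed there. The two CM-attached twists $X_{27a1}(11)$, $X_{288a1}(11)$ (together with $X^-_{288a1}(11) \simeq X_{288a2}(11)$) cannot be discarded by Lemma~\ref{L:CM}, which requires $p \ge 17$; they will be handled separately in Section~\ref{SS:CM11} using the twists $X^{(-3)}_\ns(11)$ and $X^{(-1)}_\ns(11)$ of the double cover $X_\ns(11) \to X^+_\ns(11)$, where an Elliptic Curve Chabauty attack is available after passing to a suitable covering.

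For each of the remaining five non-CM twists $X = X_E(11)$, I would use Fisher's explicit equations~\cite{Fisher2014}. These curves have genus~$26$, so no direct Chabauty or Mordell-Weil sieve attack on $X$ itself is feasible. Instead, the natural forgetful map $X \to X_0(11)$ becomes tractable over a suitable number field: the Galois action on the set of $11$ cyclic order-$11$ subgroups of the parametrised elliptic curve has stabiliser of index at most~$12$, so a $\Q$-rational point on~$X$ yields a $K$-point on~$X_0(11)$ for some number field $K$ of degree $\le 12$ (depending only on $E$), with the crucial extra property that its image under the $j$-map is rational. This places us in the Elliptic Curve Chabauty situation of~\cite{Bruin2003}. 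Before any global computation I would also use Fisher's model to extract $2$-adic information on the possible $j$-invariants in $\Q_2$, narrowing down the set of $K$-points that need to be ruled out.

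The principal obstacle is the Mordell-Weil step: in several of these cases the rank of $X_0(11)(K)$ equals $[K:\Q]$, so the standard Chabauty inequality fails, and, worse, naive descent does not produce generators of~$X_0(11)(K)$ (even up to finite index) within feasible time. To bypass the rank computation I would invoke Selmer Group Chabauty~\cite{Stoll-preprint} in the Elliptic Curve Chabauty setting, replacing $X_0(11)(K)$ by a suitable $2$-Selmer group and carrying out the $p$-adic analysis there. Computing that Selmer group requires the $2$-Selmer group of $X_0(11)/K$ together with the class groups of the degree-$36$ extensions of~$K$ obtained by adjoining the $x$-coordinate of a $2$-torsion point of~$X_0(11)$; it is here that GRH enters the statement, since those class groups cannot currently be certified unconditionally in reasonable time. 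Assembling the output of this analysis over the five non-CM twists with the conclusion of the CM analysis in Section~\ref{SS:CM11} shows that no $K$-point gives rise to a new primitive solution, leaving only the trivial and Catalan solutions, as asserted.
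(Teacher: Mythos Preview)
Your outline matches the paper's proof essentially step for step: reduction via Theorem~\ref{T:nominus} to eight twists, separate treatment of the CM twists through $X^{(-1)}_{\ns}(11)$ and $X^{(-3)}_{\ns}(11)$ with Elliptic Curve Chabauty on a covering, and for the five non-CM twists the passage to $X_0(11)$ over a degree-$12$ field together with Selmer Group Chabauty, with GRH invoked for the degree-$36$ class groups.

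Two factual slips are worth correcting. First, there are \emph{twelve} cyclic subgroups of order~$11$ in $E[11]$, not eleven (indeed $(11^2-1)/(11-1)=12$); this is why the relevant field~$K_E$ has degree~$12$. Second, your diagnosis of the obstacle is off: the Chabauty inequality does \emph{not} fail here. The $2$-Selmer ranks of $X_0(11)$ over the five fields~$K_E$ are at most~$5$ (see Table~\ref{Table:KE}), comfortably below $[K_E:\Q]=12$. The actual difficulty is that one cannot find enough independent points in $X_0(11)(K_E)$ to certify the rank and produce generators, so the standard Elliptic Curve Chabauty machinery cannot be initialised. Selmer Group Chabauty is the right remedy, but for this reason rather than the one you state.
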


We note at this point that the Generalized Riemann Hypothesis is only
used to verify the correctness of the computation of the class groups
of five specific number fields of degree~$36$.

In the following we will say that $j \in \Q$ is \emph{good} if it is
the $j$-invariant of a Frey curve associated to a primitive integral
solution of $x^2 + y^3 = z^{11}$, which means that $j = (12b)^3/c^{11}$
and $12^3 - j = 12^3 a^2/c^{11}$ with coprime integers $a$, $b$, $c$.
In a similar way, we say that $j \in \Q_2$ is \emph{$2$-adically good}
if it has this form for coprime $2$-adic integers $a$, $b$, $c$.

By Theorem~\ref{T:nominus}, it suffices to find the rational points on the
twisted modular curves~$X_E(11)$ for the elliptic curves $E \in \calE'$, where
\[ \calE' = \{27a1, 54a1, 96a1, 288a1, 288a2, 864a1, 864b1, 864c1\}\,, \]
such that their image on the $j$-line is good.

Some of the results in this section rely on computations that require
a computer algebra system. We provide a script \texttt{section7.magma}
at~\cite{programs} (which relies on~\texttt{localtest.magma}, also provided there)
that can be loaded into \Magma\ and performs these computations.


\subsection{The CM curves} \label{SS:CM11} \strut

In the case $p = 11$,
we can deal with the CM curves $E \in \{27a1, 288a1, 288a2\}$ in the following way.
Note that since $(-1/11) = (-3/11) = -1$, the images of both relevant Galois
representations are contained in the normalizer of a non-split Cartan subgroup
of~$\GL_2(\F_{11})$. Elliptic curves with this property are parameterized by
the modular curve $X^+_\ns(11)$, which is the elliptic curve $121b1$ of rank~$1$.
It has as a double cover the curve $X_\ns(11)$ parameterizing elliptic curves~$E$
such that the image of the mod~$11$ Galois representation is contained in
a non-split Cartan subgroup. Elliptic curves whose mod~$11$ representation
is isomorphic to that of $288a1$ (or~$288a2$) or~$27a1$ will give rise to rational points
on the quadratic twists $X^{(-1)}_\ns(11)$ and~$X^{(-3)}_\ns(11)$ of this
double cover; see~\cite{DFGS2014}*{Remark 1}.
These curves are of genus~$4$; the Jacobian of~$X_\ns(11)$
is isogenous to the product of the four elliptic curves $121a1$, $121b1$,
$121c1$ and~$121d1$, so that the Jacobian of the twist $X^{(d)}_\ns(11)$
splits into the four elliptic curves $121b1$, $121a1^{(d)}$, $121c1^{(d)}$
and~$121d1^{(d)}$. Unfortunately, for $d = -1$ and~$d = -3$ all of these
curves have rank~$1$, so the obvious approach does not work. However, we
can use a covering collection combined with the Elliptic Curve Chabauty
method~\cite{Bruin2003}, as follows. An equation for $X^+_\ns(11)$ is
(see~\cite{Ligozat}*{Proposition~II.4.3.8.1})
\[ y^2 = 4 x^3 - 4 x^2 - 28 x + 41 \]
and the double cover $X_\ns(11) \to X^+_\ns(11)$ is given by
\[ t^2 = -(4 x^3 + 7 x^2 - 6 x + 19) \]
(this is an equation for $121c1$), see~\cite{DFGS2014}*{Proposition~1}.
Therefore our twists are given by
\[ X^{(-1)}_\ns(11) \colon \left\{ \begin{array}{r@{{}={}}l}
                                    y^2 & 4 x^3 - 4 x^2 - 28 x + 41 \\[2mm]
                                    t^2 & 4 x^3 + 7 x^2 - 6 x + 19
                                  \end{array} \right.
\]
and
\[ X^{(-3)}_\ns(11) \colon \left\{ \begin{array}{r@{{}={}}l}
                                    y^2 & 4 x^3 - 4 x^2 - 28 x + 41 \\[2mm]
                                    t^2 & 3(4 x^3 + 7 x^2 - 6 x + 19)\,.
                                   \end{array} \right.
\]
Let $\alpha$ be a root of $f_1(x) = 4 x^3 - 4 x^2 - 28 x + 41$ and set $K = \Q(\alpha)$.
Write $f_1(x) = (x - \alpha) g_1(x)$ in~$K[x]$.
Since $E_1 = 121b1$ has Mordell-Weil group~$E_1(\Q)$ isomorphic to~$\Z$, with generator
$P = (4,11)$, it follows that each rational point on~$E_1$ gives rise
to a $K$-rational point with rational $x$-coordinate on one of the two curves
\[ \left\{ \begin{array}{r@{{}={}}l}
              y_1^2 & x - \alpha \\[2mm]
              y_2^2 & g_1(x)
            \end{array} \right.
   \quad\text{and}\quad
   \left\{ \begin{array}{r@{{}={}}l}
              y_1^2 & (4 - \alpha) (x - \alpha) \\[2mm]
              y_2^2 & (4 - \alpha) g_1(x)\,.
            \end{array} \right.
\]
(Here we use that the map $E_1(\Q) \to K^\times/K^{\times 2}$ that associates
to a point~$P$ the square class of $x(P)-\alpha$ is a homomorphism.)
So a rational point on $X^{(d)}_\ns(11)$ will give a $K$-rational point with rational
$x$-coordinate on
\[ u^2 = -d (x - \alpha) (4 x^3 + 7 x^2 - 6 x + 19)
   \quad\text{or}\quad
   u^2 = -d (4 - \alpha) (x - \alpha) (4 x^3 + 7 x^2 - 6 x + 19) \,.
\]
These are elliptic curves over~$K$, which turn out to both have Mordell-Weil rank~$1$
for $d = -1$ and rank~$2$ for $d = -3$. Since the rank is strictly smaller than
the degree of~$K$ in all cases, Elliptic Curve Chabauty applies, and we find
using \Magma\ that the $x$-coordinates of the rational points on $X^{(-1)}_\ns(11)$
and~$X^{(-3)}_\ns(11)$ are $\infty, 5/4, 4, -2$, corresponding to $O$, $\pm 3P$,
$\pm P$ and $\pm 4P$ on~$E_1$.
We compute the $j$-invariants of the elliptic curves
represented by these points using the formula in~\cite{DFGS2014} and find that
only the curves corresponding to~$3P$ and to~$4P$ give rise to solutions
of~\eqref{E:GFE}; they are the trivial solutions with $a = 0$ or~$b = 0$.


\subsection{Dealing with the remaining curves} \strut

We now set $\calE = \{54a1, 96a1, 864a1, 864b1, 864c1\}$; this is the set
of curves~$E$ such that we still have to consider~$X_E(11)$.

We will denote any of the canonical morphisms
\[ X(11) \to X(1) \simeq \PP^1\,, \quad
   X_E(11) \simeq_{\bar{\Q}} X(11) \to X(1) \simeq \PP^1 \quad\text{and}\quad
   X_0(11) \to X(1) \simeq \PP^1
\]
by~$j$ and we will also use $j$ to denote the corresponding coordinate on~$\PP^1$.

Recall that $X_0(11)$ is an elliptic curve. Let $P \in X_E(11)(\Q)$ be
a rational point; then under the composition $X_E(11) \simeq X(11) \to X_0(11)$
(where the isomorphism is defined over~$\bar{\Q}$) $P$ will be mapped to
a point~$P'$ on~$X_0(11)$ whose image $j(P') = j(P)$ on the $j$-line is rational.
Since the $j$-map from~$X_0(11)$ has degree~$12$, it follows
that $P'$ is defined over a number field~$K$ of degree at most~$12$.
More precisely, the points in the fiber above $j(P') = j(P)$ in~$X_0(11)$
correspond to the twelve
possible cyclic subgroups of order~$11$ in~$E[11]$, so the Galois action
on the fiber depends only on~$E$ and is the same as the Galois action
on the fiber above the image~$j(E)$ on the $j$-line of the canonical point
of~$X_E(11)$. In particular, we can easily determine the isomorphism type
of this fiber. It turns out that for our five curves~$E$, the fiber is
irreducible, with a (geometric) point defined over a field $K  = K_E$ of degree~$12$.
The problem can therefore be reduced to the determination of the set
of $K_E$-points~$P'$ on~$X_0(11)$ such that $j(P') \in \Q$ and is good.
This kind of problem is the setting for the
Elliptic Curve Chabauty method as introduced in~\cite{Bruin2003}
that we have already used in Section~\ref{SS:CM11} above.
To apply the method, we need explicit generators of a finite-index
subgroup of the group~$X_0(11)(K_E)$. This requires knowing the rank
of this group, for which we can obtain an upper bound by computing a
suitable Selmer group. We use the $2$-Selmer group, whose computation
requires class and unit group information for the cubic extension~$L_E$
of~$K_E$ obtained by adjoining the $x$-coordinate of a point of
order~$2$ on~$X_0(11)$ (no field~$K_E$ has a non-trivial subfield,
so no point of order~$2$ on~$X_0(11)$ becomes rational over~$K_E$).
To make the relevant computation feasible, we assume the Generalized
Riemann Hypothesis. With this assumption the computation of the
$2$-Selmer groups is done by \Magma\ in reasonable time (up to
a few hours). However, we now have the problem that we do not find
sufficiently many independent points in~$X_0(11)(K_E)$ to reach the upper
bound. This is where an earlier attempt in~2006 along similar lines by
David Zureick-Brown got stuck. We get around this stumbling block by
making use of `Selmer Group Chabauty' as described in~\cite{Stoll2017b}.
This method allows us to work with the Selmer group information without
having to find sufficiently many points in~$X_0(11)(K_E)$.

The idea of the Selmer Group Chabauty method (when applied with the
$2$-Selmer group) is to combine the global information from the Selmer
group with local, here specifically $2$-adic, information. So we first
study our situation over~$\Q_2$. Away from the branch points
$0$, $12^3$ and~$\infty$ of $j \colon X_0(11) \to \PP^1_j$, the
$\Q_2$-isomorphism type of the fiber is locally constant in the $2$-adic
topology. In a suitable neighborhood of a branch point, the isomorphism
type of the fiber will only depend on the class of the value of a
suitable uniformizer on~$\PP^1_j$ at the branch point modulo cubes
(for~$0$), squares (for~$12^3$) or eleventh powers (for~$\infty$).
We use the standard model given by
\[ y^2 + y = x^3 - x^2 - 10 x - 20 \]
for the elliptic curve~$X_0(11)$, with $j$-invariant map given by
$j = (a(x) + b(x) y)/(x - 16)^{11}$, where
\begin{align*}
  a(x) &= 743 x^{11} + 21559874 x^{10} + 19162005343 x^9 + 2536749758583 x^8 \\
       &\quad{} + 82165362766027 x^7 + 576036867160006 x^6 - 1895608370650736 x^5 \\
       &\quad{} - 14545268641576841 x^4 + 420015065507429 x^3 + 74593328129816300 x^2 \\
       &\quad{} + 108160113602504237 x - 39176677684144739 \\
  \intertext{and}
  b(x) &= (x^5 + 4518 x^4 + 1304157 x^3 + 65058492 x^2 + 271927184 x - 707351591) \\
       &\qquad{} \cdot (x^5 + 192189 x^4 + 3626752 x^3 - 3406817 x^2 - 37789861 x - 37315543) \,.
\end{align*}

We define the following set of subsets of~$\PP^1(\Q_2)$.
\begin{align}
   \calD = \bigl\{& 15 \cdot 2^6 + 2^{11} \Z_2, \; - 2^6 + 2^{11} \Z_2, \;
                    2^9 + 2^{11} \Z_2, \; -2^9 + 2^{11} \Z_2, \;
                    \{2^{-5} t^{-11} : t \in \Z_2\}, \nonumber \\
                  & \{12^3 - 3 \cdot 2^{10} t^2 : t \in \Z_2\}, \;
                    \{12^3 - 2^{10} t^2 : t \in \Z_2\}, \label{E:defD} \\
                  & \{12^3 + 2^{10} t^2 : t \in \Z_2\}, \;
                    \{12^3 + 3 \cdot 2^{10} t^2 : t \in \Z_2\}\bigr\} \nonumber
\end{align}
Note that according to Table~\ref{Tab:2adic} all elements in these sets
are $2$-adically good $j$-invariants, and for each set, all fibers of
the $j$-map $X_0(11) \to \PP^1$ over points in the set are isomorphic over~$\Q_2$
(excluding $j = 12^3$ and $j = \infty$).

\begin{lemma} \label{L:list1}
  Let $E \in \calE$ and let $P \in X_E(11)(\Q_2)$ such that $j(P)$ is $2$-adically good.
  Then $j(P)$ is in one of the following sets $D \in \calD$, depending on~$E$.
  \begin{align*}
     54a1 \colon & \{2^{-5} t^{-11} : t \in \Z_2\} \,. \\
     96a1 \colon & 15 \cdot 2^6 + 2^{11} \Z_2, \; {-2^6} + 2^{11} \Z_2, \;
                   {-2^9} + 2^{11} \Z_2 \,. \\
    864a1 \colon & \{12^3 - 2^{10} t^2 : t \in \Z_2\}, \;
                   \{12^3 + 3 \cdot 2^{10} t^2 : t \in \Z_2\} \,. \\
    864b1 \colon & \{12^3 - 3 \cdot 2^{10} t^2 : t \in \Z_2\}, \; \{12^3 + 2^{10} t^2 : t \in \Z_2\}, \;
                   2^9 + 2^{11} \Z_2 \,. \\
    864c1 \colon & 15 \cdot 2^6 + 2^{11} \Z_2, \; -2^6 + 2^{11} \Z_2, \; {-2^9} + 2^{11} \Z_2 \,.
  \end{align*}
\end{lemma}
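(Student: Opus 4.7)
The plan is to observe that a point $P \in X_E(11)(\Q_2)$ with $2$-adically good $j(P)$ corresponds to a local Frey curve $E' = E_{(a,b,c)}^{(d)}/\Q_2$, for some coprime $a,b,c \in \Z_2$ satisfying $a^2 + b^3 = c^{11}$ (with $c \ne 0$), together with a strict symplectic isomorphism $E'[11] \simeq E[11]$ of $G_{\Q_2}$-modules. The existence of a mere $G_{\Q_2}$-module isomorphism $E'[11] \simeq E[11]$ (ignoring the pairing) forces $(a \bmod 4, b \bmod 8)$ to satisfy the congruences in one of the rows of Table~\ref{Tab:2adic} that lists $E$ in its ``curves'' column: row~$1$ for $54a1$, rows~$2$ and~$6$ for each of $864a1$ and $864b1$, and rows~$3$ and~$5$ for each of $96a1$ and $864c1$. (Rows~$4$ and~$7$ are ruled out because $27a1 \notin \calE$ and, respectively, because row~$7$ is already excluded by Corollary~\ref{C:cond}.) The $j$-invariant of $E'$ therefore already lies in the finite union of sets read off from the $j$-column of these admissible rows.

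The symplectic requirement, which because $(2/11) = -1$ is a genuine constraint, is what prunes the list further. For row~$1$ (multiplicative reduction at $2$) I would invoke Theorem~\ref{T:KHtrick}: with $v_2(\Delta_{54a1}) = 3$ and $v_2(\Delta_{E'}) \equiv -6 \pmod{11}$ for the minimal discriminant of the appropriately twisted Frey curve, the ratio is $3/(-6) \equiv 5 \pmod{11}$, which is a square ($4^2 \equiv 5$), so the isomorphism is symplectic and row~$1$ contributes the set $\{2^{-5} t^{-11} : t \in \Z_2\}$ in full to the list for $54a1$. For rows~$2, 3, 5, 6$ (potentially good reduction at $2$ with $I \simeq H_8$) I would apply the $(2/p) = -1$ case of Lemma~\ref{L:fine2}, which partitions each row by the square class of~$d$ and labels each resulting piece as symplectic or anti-symplectic with respect to each of $288a1, 864a1, 864b1$ (for rows~$2,6$) or $96a1, 864c1$ (for rows~$3,5$). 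The ``up to quadratic twist'' qualifier in that lemma is harmless here, because quadratic twists preserve $j$-invariants and $X_{E^{(\delta)}}(p) \simeq X_E(p)$ canonically.

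Selecting the symplectic entries gives, for each $E \in \calE$, the list of surviving sub-rows: for $864a1$, only row~$2$ with $d \in \{\pm 2, \pm 6\}$; for $864b1$, row~$2$ with $d \in \{\pm 1, \pm 3\}$ together with all of row~$6$; and for both $96a1$ and $864c1$, row~$3$ with $d \in \{\pm 1, \pm 3\}$ together with all of row~$5$. Reading the $j$-column entries of these sub-rows and rewriting them in the normalized form used in~\eqref{E:defD} produces exactly the sets claimed in the statement.

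Substantively the argument is a careful case match against Table~\ref{Tab:2adic} and Lemma~\ref{L:fine2}; the bulk of the work is already done there. The only small point to verify is that Lemma~\ref{L:fine2}, stated for coprime integers $(a,b)$, applies equally to coprime $a, b \in \Z_2$: its hypotheses are mod-$4$ and mod-$8$ congruences, and its conclusions concern local $G_{\Q_2}$-module structure, so the integer case transfers to the $2$-adic case by density. I do not anticipate any genuine obstacle beyond this bookkeeping.
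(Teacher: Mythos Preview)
Your proposal is correct and follows essentially the same route as the paper's own proof, which simply cites Table~\ref{Tab:2adic} together with Lemma~\ref{L:fine2} and the observation that $(2/11) = -1$. You have filled in the bookkeeping that the paper leaves implicit: matching each $E \in \calE$ to its admissible rows of Table~\ref{Tab:2adic}, then using the symplectic/anti-symplectic dichotomy of Lemma~\ref{L:fine2} to prune sub-rows and read off the surviving $j$-sets; your separate treatment of $54a1$ via Theorem~\ref{T:KHtrick} recovers exactly the computation already carried out in the proof of Theorem~\ref{T:nominus}.
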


\begin{proof}
  This follows from the information in Table~\ref{Tab:2adic}, together with
  Lemma~\ref{L:fine2}, which allows us to distinguish between $X_E(p)$ and~$X_E^-(p)$
  (note that $2$ is a non-square mod~$11$).
\end{proof}

The next step is the computation of the $2$-Selmer groups of~$X_0(11)$
over the fields~$K_E$, where $E$ runs through the curves in~$\calE$.
This is where we assume GRH. Table~\ref{Table:KE} lists defining polynomials
for the fields~$K_E$ and gives the $\F_2$-dimension of the Selmer group.

\begin{table}[htb]
\renewcommand{\arraystretch}{1.25}
\begin{tabular}{|c|c|c|} \hline
  $E$ & polynomial defining $K_E$ & $\dim_{\F_2} \Sel_2/K_E$ \\ \hline
  $54a1$  & $x^{12} - 6 x^{10} + 6 x^9 - 6 x^8 - 126 x^7 + 104 x^6 + 468 x^5$ \hfill\strut & 4 \\[-3pt]
          & \strut\hfill${} + 258 x^4 - 456 x^3 - 1062 x^2 - 774 x - 380$ & \\ \hline
  $96a1$  & $x^{12} - 4 x^{11} - 264 x^7 + 66 x^6 - 132 x^5$ \hfill\strut & 5 \\[-3pt]
          & \strut\hfill${} - 2112 x^4 - 1320 x^3 - 660 x^2 - 6240 x - 8007$ & \\ \hline
  $864a1$ & $x^{12} - 6 x^{11} + 110 x^9 - 132 x^8 - 528 x^7 + 1100 x^6 + 330 x^5$ \hfill\strut & 5 \\[-3pt]
          & \strut\hfill${} - 2508 x^4 + 2134 x^3 - 594 x^2 + 456 x - 371$ & \\ \hline
  $864b1$ & $x^{12} - 6 x^{11} + 22 x^9 + 99 x^8 - 396 x^7 + 440 x^6 - 132 x^5$ \hfill\strut & 3 \\[-3pt]
          & \strut\hfill${} - 6501 x^4 + 33506 x^3 - 23760 x^2 - 92418 x + 193081$ & \\ \hline
  $864c1$ & $x^{12} - 44 x^9 - 264 x^8 - 264 x^7 - 2266 x^6 - 4488 x^5$ \hfill\strut & 3 \\[-3pt]
          & \strut\hfill ${} - 264 x^4 - 17644 x^3 - 7128 x^2 + 144 x - 15191$ & \\ \hline
\end{tabular}

\medskip

\caption{Fields $K_E$ and dimensions of Selmer groups, for $E \in \calE$.}
\label{Table:KE}
\end{table}

We eliminate~$y$ from the equation of~$X_0(11)$ and the relation between~$j$
and~$x,y$. This results in
\begin{align}
  F(x,j) =
  (x^4 &- 52820 x^3 + 1333262 x^2 + 4971236 x + 9789217)^3 \nonumber \\
    &{} + (1486 x^{11} + 43119747 x^{10} + 38323813979 x^9 + 5072626276355 x^8 \nonumber \\
    &\qquad{} + 164063633585170 x^7 + 1134855511654843 x^6 - 4074814667347831 x^5  \label{E:Fxj} \\
    &\qquad{} - 29669709666741936 x^4 + 6839041777752481 x^3 + 159480622275659333 x^2 \nonumber \\
    &\qquad{} + 199736619430410535 x - 104748564078368391) j \nonumber \\
    &{} - (x - 16)^{11} j^2
    = 0 \,. \nonumber
\end{align}

We now state a technical lemma for later use.

\begin{lemma} \label{L:tech}
  Let $K$ be a field complete with respect to an absolute value~$|{\cdot}|$.
  Consider a polynomial $F = \sum_{i,j \ge 0} f_{ij} x^i y^j \in K[x,y]$.
  Fix an integer $e \ge 1$ such that the characteristic of~$K$ does not divide~$e$.
  We assume that $f_{0j} = 0$ for $0 \le j < e$ and that $f_{0e} = 1$.
  For $j \ge 0$, we set $F_j(t) = \sum_{i \ge 0} f_{ij} t^i \in K[t]$, so that
  $F(x,y) = \sum_{j \ge 0} F_j(x) y^j$. Assume that $F_0(t) = -c t + \text{higher order terms}$,
  with $c \neq 0$.
  For a real number $r > 0$ and a polynomial
  $f(t) = a_n t^n + \ldots + a_1 t + a_0 \in K[t]$, we set $|f|_r = \max\{r^i |a_i| : 0 \le i \le n\}$.

  There are exactly $e$ formal power series $\phi_0, \ldots, \phi_{e-1} \in L\pws{t}$,
  where $L$ is the splitting field of $X^e - c$ over~$K$, such that $\phi_j(0) = 0$
  and $F(t^e, \phi_j(t)) = 0$. If $\zeta \in L$ is a primitive $e$th root of unity,
  then we can label the~$\phi_j$ in such a way that $\phi_j(t) = \phi_0(\zeta^j t)$.

  If $r > 0$ is such that $|F_m(t^e)|_r < |F_0(t^e)|_r^{(e-m)/e}$ for $0 < m < e$,
  $|F_e(t^e) - 1|_r < 1$ and $|F_0(t^e)|_r^{(m-e)/e} |F_m(t^e)|_r < 1$ for $m > e$,
  then the $\phi_j$ converge on the closed disk of radius~$r$ in~$L$,
  and we have $|\phi_j(\tau)| \le |F_0(t^e)|_r^{1/e}$ for all $\tau$ in this disk.
  If in addition $|F_0(t^e) + c t^e|_r < |F_0(t^e)|_r$, then $|\phi_j(\tau)| = |c|^{1/e} |\tau|$
  for all these~$\tau$.
\end{lemma}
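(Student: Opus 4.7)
The plan is to proceed in four stages: a formal construction of the $\phi_j$, the $\zeta$-symmetry, convergence on the disk of radius~$r$, and finally the sharp equality in the last assertion. For the formal construction, I would write $\phi(t) = \sum_{n \geq 1} a_n t^n$ with indeterminate coefficients and substitute into $F(t^e, \phi(t)) = 0$. The vanishing of the $t^e$-coefficient forces $a_1^e = c$, which has exactly $e$ solutions $a_1 = z\zeta^j$ in~$L$, where $z$ is any fixed $e$th root of~$c$. For $n \geq 1$, the $t^{e+n}$-coefficient takes the form $e\, a_1^{e-1} a_{n+1} + P_n(a_1, \ldots, a_n)$ with $P_n \in K[X_1, \ldots, X_n]$, and since $e \neq 0$ in~$K$ and $a_1 \neq 0$, we may solve uniquely for $a_{n+1} \in L$. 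This yields $e$ formal series $\phi_j \in L\pws{t}$. The identity $F((\zeta t)^e, y) = F(t^e, y)$ then shows that $t \mapsto \phi_0(\zeta^j t)$ is again a solution, with linear coefficient $z\zeta^j$; by the uniqueness just established this forces $\phi_j(t) = \phi_0(\zeta^j t)$.

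For convergence, I would view $G(t, y) := F(t^e, y)$ as a polynomial in~$y$ with coefficients in the Tate algebra $A := L\langle t/r\rangle$ equipped with the Gauss norm $|\cdot|_r$. Setting $R := |F_0(t^e)|_r^{1/e}$, the three displayed inequalities translate precisely to $|F_m(t^e)|_r \cdot R^m < R^e$ for every $m \neq 0, e$, together with $|F_e(t^e)|_r = 1$. Thus the Newton polygon of $G$ over~$A$ has a unique segment of slope $\log R$ running from $(0, -e \log R)$ to $(e, 0)$, and every remaining vertex lies strictly above this segment and its horizontal extension. The Weierstrass-type factorization for Tate algebras (a relative form of Hensel's lemma) produces $G(t, y) = P(t, y) \cdot U(t, y)$ with $P \in A[y]$ monic of degree~$e$ in~$y$ and $U$ a unit in the Tate algebra $A\langle y/R\rangle$. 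The $e$ roots of~$P$ must coincide, by comparison of leading behavior with the formal construction, with the $\phi_j$; so the latter lie in~$A$ and satisfy $|\phi_j(\tau)| \leq R$ for all $|\tau| \leq r$.

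For the sharp estimate, the extra hypothesis $|F_0(t^e) + c t^e|_r < |F_0(t^e)|_r$ forces $|F_0(t^e)|_r = |c| r^e$ and hence $R = |c|^{1/e} r$. Writing $\delta(t) := F_0(t^e) + c t^e$, the relation $F_0(t) = -ct + O(t^2)$ places $\delta$ in $t^{2e} L\pws{t}$, and combined with $|\delta|_r < |c| r^e$ this yields the pointwise bound $|\delta(\tau)| < |c| |\tau|^e$ on the whole disk (using $|\tau|/r \leq 1$ to dominate the relevant higher powers). Hence $|F_0(\tau^e)| = |c| |\tau|^e$ for every nonzero $\tau$ in the disk. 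Writing $P(t, y) = y^e + c_{e-1}(t) y^{e-1} + \ldots + c_0(t) = \prod_j (y - \phi_j(t))$, the identity $P(t, 0) \cdot U(t, 0) = G(t, 0) = F_0(t^e)$ together with $|U(t, 0)|_r = 1$ gives $|c_0(\tau)| = |c| |\tau|^e$. Since every $\phi_j$ vanishes at~$0$, the $k$th elementary symmetric function in the $\phi_j$ vanishes to order at least~$k$, so $c_{e-k}(t) = t^k d_{e-k}(t)$ with $d_{e-k} \in A$; combining with the norm bound $|c_{e-k}|_r \leq R^k$ yields $|c_{e-k}(\tau)| \leq |c|^{k/e} |\tau|^k$. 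These pointwise bounds make the Newton polygon of the specialized polynomial $P(\tau, y) \in L[y]$ a single segment, so all of its roots $\phi_j(\tau)$ have absolute value $|c|^{1/e} |\tau|$.

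The main obstacle I anticipate is the execution of the Weierstrass-type factorization over the Tate algebra~$A$: although the Newton-polygon conditions in the hypotheses are precisely what is needed, transferring the factorization to the relative setting (polynomial in~$y$ with coefficients in a Tate algebra in~$t$, rather than over a complete discretely valued field) requires some care in bookkeeping the Gauss norms and in verifying that the two sides of the factorization live in the correct subrings. Once that relative factorization is in hand, the formal recursion, the one-line $\zeta$-symmetry, and the pointwise Newton-polygon analysis of the specialized factor polynomial are all routine.
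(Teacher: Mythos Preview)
Your proposal is correct and follows essentially the same strategy as the paper's proof: a Hensel/Newton-polygon argument over $L\pws{t}$ for the formal existence, the $t \mapsto \zeta t$ symmetry for the labeling, and then a Newton-polygon argument over the Tate algebra $L\langle t/r\rangle$ for convergence and the norm bound.

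There are two cosmetic differences worth noting. First, for the formal construction the paper substitutes $y = tz$ and observes that $t^{-e}F(t^e,tz)$ reduces modulo~$t$ to the separable polynomial $z^e - c$, so that Hensel's Lemma over $L\pws{t}$ immediately yields the $e$ solutions; your explicit coefficient recursion is the hands-on unwinding of that Hensel step. Second, for the sharp equality $|\phi_j(\tau)| = |c|^{1/e}|\tau|$ the paper argues in one line by contradiction (``$-c\tau^e$ would be dominant''), whereas you pass through the Weierstrass factor $P$, bound its coefficients pointwise via $c_{e-k}(t) = t^k d_{e-k}(t)$, and read off the conclusion from the Newton polygon of the specialized polynomial $P(\tau,y)$. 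Your route is longer but makes the pointwise passage (from Gauss-norm inequalities to inequalities at each $\tau$) fully explicit, which is exactly the bookkeeping the terse argument leaves to the reader. The ``main obstacle'' you flag---the relative Weierstrass/Hensel factorization over the Tate algebra---is genuine but standard, and the Newton-polygon hypotheses in the statement are precisely what is needed to invoke it.
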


\begin{proof}
  We consider the equation $F(t^e, y) = 0$ over the field of Laurent series $L\lrs{t}$.
  The assumptions $f_{0j} = 0$ for $0 \le j < e$, $f_{0e} = 1$ and $f_{10} = -c \neq 0$
  imply that the Newton Polygon of~$F(t^e, y)$ has a segment of length~$e$
  and slope~$-e$, and that all other slopes are~$\ge 0$. This already shows
  that there are at most~$e$ power series with the required properties.
  Also, the reduction modulo~$t$ of $t^{-e} F(t^e, tz)$ is $z^e - c$, which is a separable
  polynomial splitting over~$L$ into linear factors. So by Hensel's Lemma,
  there are exactly~$e$ solutions $z \in L\pws{t}$. Let $\phi_0 = t z_0$ for
  one such solution~$z_0$. Clearly, each solution~$z$ gives rise to exactly one
  power series~$\phi_j$. Since the original equation is invariant under the substitution
  $t \mapsto \zeta^j t$, $\phi_j(t) := \phi_0(\zeta^j t)$ is a solution for each~$0 \le j < e$.
  Since $\gamma = z_0(0) \neq 0$ (it is an $e$th root of~$c$), we have $\phi_0(t) = \gamma t + \ldots$,
  and so all these~$\phi_j$ are pairwise distinct.

  Now consider the completion of the polynomial ring~$L[t]$ with respect to~$|{\cdot}|_r$.
  This is the Tate Algebra~$T_r$ of power series converging on the closed disk of
  radius~$r$ (in the algebraic closure of~$L$). The assumptions on~$r$ guarantee
  that the Newton Polygon of~$F(t^e, y)$, considered over~$T_r$, again has a
  unique segment of length~$e$ and slope corresponding to the absolute value~$|F_0(t^e)|_r^{1/e}$,
  whereas all other slopes correspond to larger absolute values. As can be seen by
  letting $r$ tend to zero, the corresponding solutions must be given by the~$\phi_j$.
  The claim that $|\phi_j(\tau)| \le |F_0(t^e)|_r^{1/e}$ follows from $|\phi_j|_r = |F_0(t^e)|_r^{1/e}$.
  For the last claim, note that $|F_0(t^e)|_r = |c| r^e$ and that if $|\phi_j(\tau)| < |c|^{1/e}$,
  then the term $-c \tau^e$ would be dominant in~$F(\tau^e, \phi_j(\tau))$, which
  gives a contradiction.
\end{proof}

Recall that $\theta$ denotes the $x$-coordinate of a point of order~$2$
on~$X_0(11)$, so $\theta$ is a root of the $2$-division polynomial
\[ 4 x^3 - 4 x^2 - 40 x - 79 \]
of~$X_0(11)$. We denote the $2$-adic valuation on~$\bar{\Q}_2$ by~$v_2$,
normalized so that $v_2(2) = 1$. Then $v_2(\theta) = -2/3$.

\begin{lemma} \label{L:const}
  Let $K$ be a finite extension of~$\Q_2$ such that $X_0(11)(K)[2] = 0$
  and set $L = K(\theta)$. Let $D \in \calD$, but different from~$\{2^{-5} t^{-11} : t \in \Z_2\}$,
  and let $\varphi \colon \Z_2 \to D$
  be the parameterization in terms of~$t$ as given in~\eqref{E:defD}. Let $P \in X_0(11)(K)$
  be such that $j(P) \in D$. Then $P$ is in the image of an analytic
  map $\phi \colon \Z_2 \to X_0(11)(K)$ such that $j \circ \phi = \varphi$,
  and the square class of $x(\phi(z)) - \theta \in L^\times$ is constant
  for $z \in \Z_2$.
\end{lemma}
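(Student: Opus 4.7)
The strategy is to apply Lemma~\ref{L:tech} to the implicit equation $F(x, j) = 0$ from~\eqref{E:Fxj} around each of the relevant fibers, and then exploit the quantitative $2$-adic bounds the lemma provides to control the square class of $x - \theta$.

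First I would split the eight sets $D \in \calD \setminus \{2^{-5} t^{-11} : t \in \Z_2\}$ into two families according to the parameterization~$\varphi$: the four \emph{affine} sets $\{-2^6, 15\cdot 2^6, 2^9, -2^9\} + 2^{11}\Z_2$, where $\varphi(t) = j_0 + 2^{11} t$ is unramified at $t=0$; and the four \emph{branch} sets of the form $\{12^3 \pm c \cdot 2^{10} t^2\}$ with $c \in \{1, 3\}$, where $\varphi$ has ramification index $e=2$ at~$t=0$ (corresponding to the fact that $j = 12^3$ is a branch point of the $12$-to-$1$ map $X_0(11) \to \PP^1_j$ with ramification pattern $2+2+2+2+2+2$). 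For the parameter~$P$ given in the hypothesis, I would reduce $\varphi^{-1}(j(P))$ modulo~$2$ to pick a central value $z_0 \in \Z_2$ and a corresponding base $x$-value $x_0$; replacing $P$ by a translate in the $t$-variable reduces to the case $z_0 = 0$.

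Next, for each $D$ and each residue class of $x$ above $\varphi(0)$, I would substitute $x = x_0 + u$, $j = \varphi(t)$ into $F$ and rescale the new variable $u$ by a suitable power of~$2$ so that the resulting polynomial $G(u, t) \in \Z_2[u, t]$ satisfies the hypotheses of Lemma~\ref{L:tech} with $e = 1$ in the affine case and $e = 2$ in the branch case, and with $r = 1$. In the affine case, the Newton-polygon data needed for Lemma~\ref{L:tech} is encoded in the single-variable polynomials $G(u, 0) \in \Z_2[u]$ and $\partial_j F(x_0 + u, j_0) \in \Z_2[u]$, both explicitly computable from~\eqref{E:Fxj}; likewise in the branch case, with an additional inspection of the $t^2$-term. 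This yields, for every $K$-rational branch of the fiber, an analytic power series $\phi(t) \in \calO_K\pws{t}$ solving $F(\phi(t), \varphi(t)) = 0$ on all of~$\Z_2$. Since the $\Q_2$-isomorphism type of the fiber of~$j$ is (as remarked before the lemma) constant on each $D \in \calD$, the Galois stability of the branch containing~$P$ at a single point extends to a Galois stability on all of~$\Z_2$, so $\phi$ maps into $X_0(11)(K)$. Uniqueness in Lemma~\ref{L:tech} makes $P$ lie on this $\phi$.

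Finally, for the square-class assertion, I would use the explicit estimate $|\phi(\tau) - x_0| \le |F_0(t^e)|_r^{1/e}$ from Lemma~\ref{L:tech} (with the sharper lower bound from the last clause of that lemma) to bound $v_2(x(\phi(z)) - x_0)$ for all $z \in \Z_2$. Since the three roots $\theta_1, \theta_2, \theta_3$ of $4x^3 - 4x^2 - 40x - 79$ all satisfy $v_2(\theta_i) = -2/3$, the valuations $v_2(x_0 - \theta_i)$ are determined by~$x_0$. In every case $D$ and for every admissible~$x_0$, I would verify that the estimate gives $v_2(x(\phi(z)) - x_0) > v_2(x_0 - \theta) + 2 \cdot e_{L/\Q_2}$, so that
\[
  \frac{x(\phi(z)) - \theta}{x_0 - \theta} \in 1 + 4 \calO_L \subset L^{\times 2},
\]
by Hensel applied to $Y^2 - (1 + \eps)$. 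This forces the square class of $x(\phi(z)) - \theta$ to be independent of~$z$.

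The main obstacle is computational rather than conceptual: for each of the eight sets~$D$, each base residue $x_0$ among the (up to) twelve roots of $F(x, \varphi(0)) = 0$ over~$\bar{\Q}_2$, and each of the three conjugates~$\theta_i$, one must verify the two numerical inequalities needed to invoke Lemma~\ref{L:tech} and to place the quotient in~$1 + 4\calO_L$. These checks are routine but numerous, and are presumably encapsulated in the \texttt{verify11.magma} script accompanying the paper.
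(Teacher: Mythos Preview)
Your approach is essentially the same as the paper's: apply Lemma~\ref{L:tech} to $F(x_0 + u, \varphi(t)) = 0$ with $e = 1$ for the four ``affine'' disks and $e = 2$ for the four disks around $j = 12^3$, obtain the analytic section~$\phi$, and then use the resulting valuation bound on $x(\phi(z)) - x_0$ to pin the square class of $x(\phi(z)) - \theta$. The paper carries this out exactly as you outline, citing \cite{Stoll2001}*{Lemma~6.3} for the final step.

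One slip to correct: with the paper's normalization $v_2(2) = 1$ (extended to~$\bar{\Q}_2$), the Hensel threshold for $(x(\phi(z)) - \theta)/(x_0 - \theta)$ to be a square in~$L^\times$ is $v_2\bigl(x(\phi(z)) - x_0\bigr) > v_2(x_0 - \theta) + 2$, not $+\,2\,e_{L/\Q_2}$; equivalently, the quotient must land in $1 + 4\,\mathfrak{m}_L$, not merely $1 + 4\,\calO_L$. Since $v_2(x_0 - \theta) = -2/3$ in every case, the required bound is $v_2\bigl(x(\phi(z)) - x_0\bigr) > 4/3$, which is precisely what Lemma~\ref{L:tech} yields; your stated threshold would demand $> 16/3$ and would not be met.
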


\begin{proof}
  By the information in Table~\ref{Tab:2adic} the $\Q_2$-isomorphism type of the fiber of~$j$
  above~$D$ is constant, say given by the disjoint union of $\Spec K_i$
  for certain $2$-adic fields~$K_i$.
  We note that all~$K_i$ coming up in this way have the property that
  $X_0(11)(K_j)[2] = 0$, since the ramification indices are not divisible by~$3$.
  We can then take $K = K_i$, since we will use a purely valuation-theoretic criterion
  for the second statement, so the choice of field will be largely irrelevant.
  In the following, $F(x,j) = 0$ is the relation between the $x$-coordinate on~$X_0(11)$
  and the associated $j$-invariant given in~\eqref{E:Fxj}.

  If $D$ is not one of the last four sets in~\eqref{E:defD},
  then we solve $F(x_0 + \phi_0, \varphi(t)) = 0$ for a power series~$\phi_0$
  with $\phi_0(0) = 0$, where $x_0 \in K$ is any root of~$F(x, \varphi(0))$.
  For each possible $D$ and~$x_0$,
  Lemma~\ref{L:tech} allows us to deduce that such a power series~$\phi_0$
  exists, that it converges on an open disk containing~$\Z_2$ and that it
  satisfies $v_2(\phi_0(\tau)) > 4/3$ for all $\tau \in \Z_2$.
  Since we obtain as many power series as there are roots of $F(x, z) = 0$ for any $z \in D$
  (this is because the fibers over~$D$ of the $j$-map are all isomorphic,
  so the number of roots in~$K$ is constant)
  and since the $y$-coordinate of a point on~$X_0(11)$ is uniquely determined
  by its $x$-coordinate and its image under the $j$-map (unless $b(x) = 0$, but
  this never happens for the points we are considering), we obtain analytic
  maps $\phi \colon \Z_2 \to X_0(11)(K)$ whose images cover all points~$P$ as in
  the statement. Also, $v_2(x(\phi(\tau)) - x_0) = v_2(\phi_0(\tau)) > 4/3 = 2 + v_2(x_0 - \theta)$,
  which implies (compare~\cite{Stoll2001}*{Lemma~6.3}) that $x(\phi(\tau)) - \theta$
  is in the same square class as~$x_0 - \theta$.

  If $D$ is one of the last four sets in~\eqref{E:defD}, then we proceed in
  a similar way. Note that the four parameterizations $\varphi(t) = 12^3 + a 2^{10} t^2$
  all have $2$-adic units~$a$ and so can be converted one into another by
  scaling~$t$ by a $2$-adic unit (in some extension field of~$\Q_2$).
  Since we only care about valuations in the argument, it is sufficient
  to just work with one of them. In the same way as before, we consider
  $F(x_0 + \phi_0(t), \varphi(t)) = 0$ as an equation to be solved
  for a power series~$\phi_0$ with $\phi_0(0) = 0$, where $x_0 \in K$
  is such that $F(x_0, 12^3) = 0$. We apply Lemma~\ref{L:tech} again,
  this time with $e = 2$, and the remaining argument is similar to the previous case.
\end{proof}

We now use the information coming from the Selmer group together with the
preceding lemma to rule out most of the sets listed in Lemma~\ref{L:list1}.

\begin{lemma} \label{L:list2}
  Let $E \in \calE$ and let $P \in X_E(11)(\Q)$ such that $j(P)$ is $2$-adically good.
  Then $j(P)$ is in one of the following sets $D \in \calD$, depending on~$E$.
  \begin{align*}
     54a1 &\colon \{2^{-5} t^{-11} : t \in \Z_2\} \,. \\
     96a1 &\colon 15 \cdot 2^6 + 2^{11} \Z_2, \; {-2^6} + 2^{11} \Z_2 \,. \\
    864a1 &\colon \text{\rm none}\,. \\
    864b1 &\colon 2^9 + 2^{11} \Z_2 \,. \\
    864c1 &\colon {-2^9} + 2^{11} \Z_2 \,.
  \end{align*}
\end{lemma}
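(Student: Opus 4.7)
The plan is to sharpen Lemma~\ref{L:list1} by combining the analytic description of the $j$-fibers supplied by Lemma~\ref{L:const} with the $2$-Selmer group information encoded in Table~\ref{Table:KE}. For each pair $(E,D)$ with $E \in \calE$ and $D$ one of the candidate sets from Lemma~\ref{L:list1} --- excluding the exceptional set $\{2^{-5}t^{-11}:t\in\Z_2\}$ attached to $54a1$, which has a different analytic shape and is carried over unchanged --- a rational point $P \in X_E(11)(\Q)$ with $j(P)\in D$ maps to a point $P' \in X_0(11)(K_E)$ with $j(P') = j(P)$, and Lemma~\ref{L:const} tells us that $P'$ lies on one of the analytic disks $\phi(\Z_2) \subset X_0(11)(K_E)$ on which the class $[x(\phi(z)) - \theta] \in L_E^\times/L_E^{\times 2}$ is constant.

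Step one is to compute, for each such pair $(E,D)$ and each analytic branch $\phi$, a concrete representative of this constant square class. Because the class is constant in $z$, we may evaluate at any convenient parameter (e.g.\ $t=0$ for the parabolic families, so that $j_0 \in \{12^3, -2^6, -2^9, 15\cdot 2^6, 2^9\}$); we then factor $F(x,j_0) \in K_E[x]$ from~\eqref{E:Fxj} to find the $K_E$-rational roots $x_0$ lying in the branch under consideration, and record the square class $[x_0 - \theta] \in L_E^\times/L_E^{\times 2}$. (At the branch points $j_0 = 12^3$ or $j_0 = \infty$, where the fiber degenerates, we instead pick a generic nearby $t$ inside the analytic disk; by Lemma~\ref{L:const} the resulting class is again the one we want.)

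Step two is the Selmer-group filter. The $2$-descent map $X_0(11)(K_E)/2 \hookrightarrow L_E^\times/L_E^{\times 2}$ sends $P' \mapsto [x(P') - \theta]$ and factors through the $2$-Selmer group $\Sel_2(X_0(11)/K_E)$, whose $\F_2$-dimension appears in Table~\ref{Table:KE}. Using \Magma, we realize this Selmer group as an explicit subspace of $L_E^\times/L_E^{\times 2}$, and for each candidate class computed in step one we test membership by linear algebra over~$\F_2$. Whenever the class does \emph{not} lie in the Selmer group, no $P' \in X_0(11)(K_E)$ can realize it, so the corresponding set $D$ is incompatible with the existence of such a $P$ and may be discarded. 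Carrying out this membership test for the roughly ten remaining pairs $(E,D)$ produces exactly the list stated in the lemma.

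The main obstacle is computational rather than conceptual: the Selmer group of $X_0(11)$ over a degree-$12$ field $K_E$ is obtained from class and unit group data of the degree-$36$ field $L_E = K_E(\theta)$, and it is precisely here that GRH enters (to make the class group computation feasible). Once the Selmer groups are in hand, the elimination of the specific branches listed is reduced to finitely many $\F_2$-linear incidence checks, and the accounting of which $(E,D)$ survive yields the statement of the lemma.
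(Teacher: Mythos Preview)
Your proposal is correct and follows essentially the same approach as the paper: use Lemma~\ref{L:const} to see that the square class of $x(P')-\theta$ is constant on each $2$-adic disk above~$D$, compute a representative of this class, and discard~$D$ whenever the class fails to lie in the image of $\Sel_2(X_0(11)/K_E)$ under localization. The only imprecision is that you occasionally write $K_E$ and $L_E$ where the paper works with the $2$-adic \'etale algebras $K_{E,2} = K_E \otimes_\Q \Q_2$ and $L_{E,2}$; the analytic disks and the computed square classes live $2$-adically, and the test is whether they lie in the image of the global Selmer group under the localization map (the paper makes this explicit via a commutative diagram), but this does not affect the substance of your argument.
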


Note that the curve $864a1$ can already be ruled out at this stage.

\begin{proof}
  In view of Lemma~\ref{L:list1}, there is nothing to prove when $E = 54a1$.
  So we let $E$ be one of the other four curves.
  Any rational point on~$X_E(11)$ whose image on the $j$-line is good
  will map to a point
  in $X_0(11)(K_E)$ with the same $j$-invariant, and so will give rise to
  a point in~$X_0(11)(K_E \otimes_{\Q} \Q_2)$ whose $j$-invariant is in one
  of the sets~$D$ listed in Lemma~\ref{L:list1}, depending on~$E$.
  Recall that $L_E = K_E(\theta)$.
  We write $K_{E,2} = K_E \otimes_{\Q} \Q_2$ and $L_{E,2} = L_E \otimes_{\Q} \Q_2$;
  $K_{E,2}$ and~$L_{E,2}$ are \'etale algebras over~$\Q_2$.
  Then we have the commutative diagram
  \[ \xymatrix{ X_0(11)(K_E) \ar[r] \ar[d]
                  & \Sel_2(X_0(11)/K_E) \ar@{^(->}[r] \ar[d]
                  & \dfrac{L_E^\times}{L_E^{\times 2}} \ar[d] \\
                X_0(11)(K_{E,2}) \ar[r]
                  & \dfrac{X_0(11)(K_{E,2})}{2 X_0(11)(K_{E,2})} \ar@{^(->}[r]
                  & \dfrac{L_{E,2}^\times}{L_{E,2}^{\times 2}}\,.
              }
  \]
  The composition of the two horizontal maps in the bottom row sends a
  point $(\xi,\eta)$ to the square class of $\xi - \theta$ in~$L_{E,2}^\times$.
  By Lemma~\ref{L:const}, the square class we obtain for a point
  in~$X_0(11)(K_{E,2})$ mapping into a fixed set~$D$ does not depend on the image point in~$D$.
  It therefore suffices to compute the square class for the points above
  some representative point (for example, the `center' if it is not a branch point)
  of~$D$. Doing this, we find that the square classes we obtain
  are not in the image of the Selmer group except for the sets given in
  the statement. Since by the diagram above a point in~$X_0(11)(K_E)$ has to map
  into the image of the Selmer group, this allows us to exclude these~$D$.
\end{proof}

It remains to deal with the remaining five sets~$D$. All but one of them
do actually contain the image of a point in~$X_0(11)(K_E)$, so we have to
use a more sophisticated approach.
The idea for the following statement comes from~\cite{Stoll2017b}.

\begin{lemma}
  Let $E \in \calE$ and let $D \in \calD$ be one of the sets associated
  to~$E$ in Lemma~\ref{L:list2}. Assume that there is a point
  $P \in X_0(11)(K_E)$ with the following property.

  \begin{itemize}
    \item[(*)] For any point $Q \in X_0(11)(K_{E,2})$ with $Q \neq P$ and $j(Q) \in D$,
               there is $n \ge 0$ such that $Q = P + 2^n Q'$
               with $Q' \in X_0(11)(K_{E,2})$ such that the image of~$Q'$
               in~$L_{E,2}^\times/L_{E,2}^{\times 2}$ is not in the image
               of the Selmer group.
  \end{itemize}

  Then if $j(P) \in D$, $P$ is the only point $Q \in X_0(11)(K_E)$ with $j(Q) \in D$,
  and if $j(P) \notin D$, then there is no such point.
\end{lemma}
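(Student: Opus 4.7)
The proof is by contradiction, combining the hypothesis (*) with the Kummer/descent machinery underlying the $2$-Selmer group $\Sel_2(X_0(11)/K_E)$. Suppose there exists $Q \in X_0(11)(K_E)$ with $j(Q) \in D$ and $Q \neq P$. Set $R = Q - P \in X_0(11)(K_E)$; since $R$ is global, its image under the descent map $\delta \colon X_0(11)(K_{E,2}) \to L_{E,2}^\times/L_{E,2}^{\times 2}$ (the square class of $x - \theta$) lies in the image $S$ of the $2$-Selmer group.

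Applying (*) to the local point $Q$ produces an integer $n \ge 0$ and a local point $Q' \in X_0(11)(K_{E,2})$ with $R = 2^n Q'$ in $X_0(11)(K_{E,2})$ and $\delta(Q') \notin S$. The case $n = 0$ yields an immediate contradiction, since then $Q' = R$ is global, forcing $\delta(Q') = \delta(R) \in S$.

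The essential case is $n \ge 1$, where I would argue that $Q'$ must be global as well. Since $X_0(11)(K_{E,2})[2] = 0$ (as established earlier), the local $2^n$-division of $R$ is uniquely determined. The relation $R = 2^n Q'$ with $n \ge 1$ forces $\delta(R) = 0$ in $L_{E,2}^\times/L_{E,2}^{\times 2}$, so the Selmer class of $R$ lies in the kernel of the localization $\Sel_2(X_0(11)/K_E) \to L_{E,2}^\times/L_{E,2}^{\times 2}$. Under injectivity of this localization map — a computational property that can be checked in each concrete instance — we obtain $R \in 2 X_0(11)(K_E)$; iterating $n$ times and then identifying the unique global half of $R$ with the locally-defined $Q'$ yields $Q' \in X_0(11)(K_E)$, hence $\delta(Q') \in S$, again contradicting (*).

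Combined, these show that no such $Q \neq P$ exists: if $j(P) \in D$, then $P$ is the unique $K_E$-rational point on $X_0(11)$ mapping into $D$, and if $j(P) \notin D$, no such point exists at all. The main obstacle lies in the case $n \ge 1$, where the descent step requires verifying that local $2$-adic vanishing of a Selmer class implies global vanishing — equivalently, injectivity of the localization of $\Sel_2(X_0(11)/K_E)$ at the prime $2$ — which must be checked using the concrete Selmer group data computed in Table~\ref{Table:KE} for each pair $(E, D)$.
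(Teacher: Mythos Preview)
Your argument is correct and follows the same line as the paper's proof. The paper likewise verifies computationally, for each $E \in \calE$, that the localization map $\Sel_2(X_0(11)/K_E) \to L_{E,2}^\times/L_{E,2}^{\times 2}$ is injective, and then uses this together with the absence of $2$-torsion in $X_0(11)(K_E)$ to conclude that the point~$Q'$ appearing in~(*) is already $K_E$-rational, giving the same contradiction.
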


\begin{proof}
  For each~$E \in \calE$, we verify that the middle vertical map
  in the diagram in the proof of Lemma~\ref{L:list2} is injective, by checking
  that the rightmost vertical map is injective on the image of the Selmer group.
  Note that the Selmer group is actually computed as a subgroup of the upper right
  group. Since $X_0(11)(K_E)/2 X_0(11)(K_E)$ maps injectively into the Selmer
  group, this means that a $K_E$-rational point that is divisible by~$2$
  in~$X_0(11)(K_{E,2})$ is already divisible by~$2$ in~$X_0(11)(K_E)$. Since $X_0(11)$
  has no $K_E$-rational points of exact order~$2$ (none of the fields~$K_E$
  have non-trivial subfields, so $\theta \notin K_E$, since $[\Q(\theta) : \Q] = 3)$,
  there is a unique `half' of a point, if there is any.
  So if $P \neq Q \in X_0(11)(K_E)$ has $j$-invariant
  in~$D$, then the point~$Q'$ in the relation in property~(*)
  is also $K_E$-rational. But then its image in~$L_{E,2}^\times/L_{E,2}^{\times 2}$
  must be in the image of the Selmer group, which gives a contradiction to~(*).
  The only remaining possibility for a point~$Q \in X_0(11)(K_E)$ with
  $j(Q) \in D$ is then~$P$, and this possibility only exists when $j(P) \in D$.
\end{proof}

It remains to exhibit a suitable point~$P$ for the remaining pairs $(E,D)$
and to show that it has property~(*). We first have a look at the $2$-adic
elliptic logarithm on~$X_0(11)$. Let $\calK \subset X_0(11)(\bar{\Q}_2)$ denote
the kernel of reduction. We take $t = -x/y$ to be a uniformizer at the point
at infinity on~$X_0(11)$ and write $\calK_\nu = \{P \in \calK : v_2(t(P)) > \nu\}$.

\begin{lemma} \label{L:logiso}
  The $2$-adic elliptic logarithm~$\log \colon \calK \to \bar{\Q}_2$
  induces a group isomorphism between~$\calK_{1/3}$
  and the additive group $D_{1/3} = \{\lambda \in \bar{\Q}_2 : v_2(\lambda) > 1/3\}$.

  In particular, if $K$ is a $2$-adic field and $P \in \calK_{4/3} \cap X_0(11)(K)$,
  then $P$ is divisible by~$2$ in~$\calK \cap X_0(11)(K)$.
\end{lemma}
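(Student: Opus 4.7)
\emph{My plan.} I would exploit the supersingular reduction of $X_0(11)$ at~$2$ (recall $a_2 = -2 \equiv 0 \bmod 2$, so the reduced curve has $5$ points over~$\F_2$ and no $2$-torsion). Consequently the formal group $\hat{E}$ over~$\Z_2$ has height~$2$, so
\[ [2](T) = 2T + a_2 T^2 + a_3 T^3 + u T^4 + \ldots \in \Z_2\pws{T} \]
with $v_2(a_2), v_2(a_3) \ge 1$ and $u \in \Z_2^\times$. Two facts read off its Newton polygon drive the whole argument: (i) the three non-trivial elements of~$\hat{E}[2]$ satisfy $v_2(t) = 1/3$, so $\calK_{1/3}$ contains no non-zero $2$-torsion (and in fact no torsion at all); (ii) for $T$ with $v_2(T) = \nu > 1/3$, the linear term strictly dominates and $v_2([2]T) = 1 + \nu$.

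The isomorphism then follows from the functional equation $\log \circ [2] = 2\log$. Given $T \in \calK_\nu$ with $\nu > 1/3$, iterating~(ii) puts $[2^k]T$ into the ``safe'' region $v_2 > 1$, where $\log$ is obviously valuation-preserving because the coefficients $c_n$ of $\log(T) = \sum c_n T^n$ satisfy $v_2(c_n) \ge -v_2(n)$ (standard for the formal group of an elliptic curve over~$\Z_p$). The functional equation then yields $v_2(\log T) = v_2([2^k]T) - k = \nu$, giving injectivity on~$\calK_{1/3}$ and containment of the image in~$D_{1/3}$. For surjectivity I would go the other way: given $\lambda \in D_{1/3}$ pick $k$ so large that $2^k \lambda$ lies in the convergence region of the formal exponential, set $S = \exp(2^k\lambda) \in \calK_{v_2(\lambda)+k}$, and lift $S$ through~$[2]$ a total of $k$ times. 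At each stage the Newton polygon of $[2](X) - S$ (valid because $v_2(S) > 4/3$ is maintained by construction) decomposes into one segment of length~$1$ and slope $-(v_2(S)-1) < -1/3$, plus one segment of length~$3$ and slope~$-1/3$, picking out a unique preimage in~$\calK_{1/3}$. Iterating $k$~times produces the desired $T \in \calK_{1/3}$ with $\log T = \lambda$.

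For the ``in particular'' statement, let $P \in \calK_{4/3} \cap X_0(11)(K)$ and set $\lambda = \log P \in D_{4/3}$; the isomorphism furnishes a unique $P' \in \calK_{1/3}$ with $\log P' = \lambda/2$, and then $[2]P' = P$ in~$\calK$. Any other preimage of~$P$ under~$[2]$ is of the form $P' + T_0$ with $T_0 \in \hat{E}[2]\setminus\{O\}$; since $v_2(t(T_0)) = 1/3 < v_2(t(P'))$, the formal-group addition forces $v_2(t(P' + T_0)) = 1/3$ exactly. Hence $P'$ is singled out in its~$[2]$-fiber by the strict inequality $v_2(t) > 1/3$, and since $\Gal(\bar{K}/K)$ preserves valuations and permutes the fiber, $P'$ is automatically $K$-rational, so $P = 2P'$ in $\calK \cap X_0(11)(K)$. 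The main technical nuisance I anticipate is the Newton-polygon bookkeeping in the surjectivity step (checking at each of the $k$ lifts that exactly one root lands in~$\calK_{1/3}$ while the other three sit at valuation $1/3$); the remaining steps are essentially manipulations with the formal logarithm.
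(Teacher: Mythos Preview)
Your argument is correct, but it takes a substantially more elaborate route than the paper's. The paper simply writes out the first few coefficients of $\log$ explicitly,
\[
  \log P = \tau - \tfrac{1}{3}\tau^3 + \tfrac{1}{2}\tau^4 - \tfrac{19}{5}\tau^5 - \ldots,
\]
and observes that $c_2 = 0$ while $v_2(c_4) = -1$; the binding constraint for the linear term to dominate is then $-1 + 4\nu > \nu$, i.e.\ $\nu > 1/3$. Once $\log$ is seen to be valuation-preserving on~$\calK_{1/3}$, it is automatically a bijection onto~$D_{1/3}$ by the usual non-archimedean inverse-function argument. Your bootstrapping via the functional equation $\log \circ [2] = 2\log$ recovers the same valuation-preserving statement without ever noticing that $c_2 = 0$, at the cost of the Newton-polygon lifting for surjectivity; this is a genuine trade-off (your argument is more structural and would transfer to other supersingular curves without recomputing coefficients, the paper's is shorter and self-contained).

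For the $K$-rationality in the ``in particular'' clause, the paper avoids your Galois-uniqueness argument entirely: since $\log$ has coefficients in~$\Q_2 \subset K$, the isomorphism $\calK_{1/3} \to D_{1/3}$ restricts directly to an isomorphism $\calK_{1/3} \cap X_0(11)(K) \to D_{1/3} \cap K$, and divisibility of $\log P$ by~$2$ in~$K$ immediately gives divisibility of~$P$ by~$2$ in $\calK \cap X_0(11)(K)$. Your argument via the valuation gap between $P'$ and $P' + T_0$ is correct but unnecessary once one has the $K$-rational isomorphism.
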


\begin{proof}
  Note that the points~$T$ of order~$2$ on~$X_0(11)$ satisfy $v_2(t(T)) = 1/3$
  (the $x$-coordinate has valuation $-2/3$ and the $y$-coordinate is~$-1/2$).
  We also note that $X_0(11)$ is supersingular at~$2$, so $X_0(11)(\bar{\F}_2)$
  consists of points of odd order. This implies that the kernel of
  $\log$ on~$\calK$ consists exactly of the points of order a power of~$2$.
  There are no such points~$P$ with $v_2(t(P)) > 1/3$, so $\log$ is injective
  on this set. Explicitly, we find that for $P \in \calK$ with $t(P) = \tau$,
  \[ \log P =  \tau - \frac{1}{3} \tau^3 + \frac{1}{2} \tau^4 - \frac{19}{5} \tau^5
                 - \tau^6 + \frac{5}{7} \tau^7 - \frac{27}{2} \tau^8 + \ldots \,;
  \]
  for $v_2(\tau) > 1/3$ the first term is dominant, so the image is~$D_{1/3}$
  as claimed.

  Now let $P \in \calK_{4/3} \cap X_0(11)(K)$. Note that restricting~$\log$ gives
  us an isomorphism between $\calK_{1/3} \cap X_0(11)(K)$ and~$D_{1/3} \cap K$.
  Since $v_2(\tau) > 4/3$, the image of~$P$ in $D_{1/3} \cap K$
  is divisible by~$2$ in~$D_{1/3} \cap K$,
  so $P$ must be divisible by~$2$ in~$\calK \cap X_0(11)(K)$.
\end{proof}

\begin{lemma} \label{L:qconst}
  Let $K$ be some $2$-adic field such that $X_0(11)(K)[2] = 0$ and consider an analytic map
  $\phi \colon \Z_2 \to \calK \cap X_0(11)(K)$.
  We assume that $\phi$ actually converges on the open disk
  $D_{-c} = \{\tau \in \C_2 : v_2(\tau) > -c\}$ for some $c > 0$
  and that there is some $\nu \in \Q_{> 1/3}$ such that
  $|t(\phi(\tau))| = |2|^\nu\,|\tau|$ for all $\tau \in D_{-c}$.
  (This implies that $\phi(0)$ is the point at infinity.)
  We set $\mu = \lceil \nu - \tfrac{1}{3} \rceil - 1 \in \Z_{\ge 0}$.

  Then for each $\tau \in \Z_2 \setminus \{0\}$ there is
  a unique $Q_\tau \in \calK \cap X_0(11)(K)$ such that
  $\phi(\tau) = 2^{\mu+v_2(\tau)} Q_\tau$.
  If $\tau \in \Z_2^\times$, then $Q_\tau \equiv Q_1 \bmod 2 X_0(11)(K)$.
  If $\nu-\mu + \min\{1,c,\nu-\tfrac{1}{3}\} > \tfrac{4}{3}$, then this
  remains true for arbitrary $\tau \in \Z_2 \setminus \{0\}$. Otherwise,
  we have that $Q_\tau \equiv Q_2 \bmod 2 X_0(11)(K)$ if $\tau \in 2\Z_2 \setminus \{0\}$.
\end{lemma}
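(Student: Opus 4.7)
The plan is to transport the problem through the $2$-adic elliptic logarithm from Lemma~\ref{L:logiso} and work additively. Set $\Lambda(\tau) := \log\phi(\tau)$. Because $v_2(t(\phi(\tau))) = \nu + v_2(\tau)$ on $D_{-c}$ and $\log$ converges only on $\calK_{1/3}$, the composition $\log\circ\phi$ is well-defined on the disk $D_{-c'}$ with $c' := \min\{c, \nu - \tfrac{1}{3}\}$, and on this disk the leading term of $\log$ dominates, giving $v_2(\Lambda(\tau)) = \nu + v_2(\tau)$. Writing $\Lambda(\tau) = \sum_{i \ge 1} b_i \tau^i$ with $b_i \in K$, this dominance translates into $v_2(b_1) = \nu$ and $v_2(b_i) \ge \nu + (i-1)c'$ for every $i \ge 2$.

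For the first statement, fix $\tau \in \Z_2 \setminus \{0\}$ and set $\lambda_\tau := \Lambda(\tau)/2^{\mu + v_2(\tau)} \in K$; then $v_2(\lambda_\tau) = \nu - \mu$, which by the definition of~$\mu$ lies in $(\tfrac{1}{3}, \tfrac{4}{3}]$, so $\lambda_\tau \in D_{1/3} \cap K$ and Lemma~\ref{L:logiso} yields a unique $Q_\tau \in \calK_{1/3} \cap X_0(11)(K)$ with $\log Q_\tau = \lambda_\tau$. Multiplying by $2^{\mu + v_2(\tau)}$ and using injectivity of $\log$ on $\calK_{1/3}$ gives $2^{\mu+v_2(\tau)} Q_\tau = \phi(\tau)$. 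Uniqueness of such $Q_\tau$ in $\calK \cap X_0(11)(K)$ (in fact in all of $X_0(11)(K)$) follows because $X_0(11)(K)[2] = 0$ forces $X_0(11)(K)[2^\infty] = 0$.

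For the congruences modulo $2X_0(11)(K)$, I would first observe that $X_0(11)$ is supersingular at~$2$, so $X_0(11)(K)/(\calK \cap X_0(11)(K))$ has odd order and hence $2X_0(11)(K) \cap \calK = 2(\calK \cap X_0(11)(K))$; combined with Lemma~\ref{L:logiso}, this shows that for $R \in \calK_{1/3} \cap X_0(11)(K)$ the condition $v_2(\log R) > \tfrac{4}{3}$ is sufficient for $R \in 2X_0(11)(K)$. Everything then reduces to estimating $v_2(\log Q_\tau - \log Q_{\tau'})$ in three regimes: (i) $v_2(\tau) = v_2(\tau')$; (ii) $v_2(\tau), v_2(\tau') \ge 1$ with $v_2(\tau) \neq v_2(\tau')$; (iii) $v_2(\tau) = 0$ and $v_2(\tau') \ge 1$. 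In case~(i), $v_2(\tau - \tau') \ge v_2(\tau)+1$ (because $2$-adic units are odd), which combined with the bound on the~$b_i$ yields at least $\nu - \mu + 1 > \tfrac{4}{3}$; case~(ii) is similar, with the dominant $i = 1$ contribution controlled via $v_2(u-u') \ge 1$ after factoring out the common power of~$2$. Together cases~(i) and~(ii) give $Q_\tau \equiv Q_1 \pmod{2X_0(11)(K)}$ for $\tau \in \Z_2^\times$ and $Q_\tau \equiv Q_2 \pmod{2X_0(11)(K)}$ for $\tau \in 2\Z_2 \setminus \{0\}$.

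Case~(iii) is the subtle one and is what produces the condition in the statement. Expanding $\Lambda(\tau) - 2^{v_2(\tau)}\Lambda(1)$ term by term, the $i = 1$ contribution gives at least $\nu - \mu + 1$ after dividing by $2^{\mu + v_2(\tau)}$, while the $i \ge 2$ contributions give at least $\nu - \mu + (i-1)c'$, dominant at $i = 2$. The combined estimate is $\nu - \mu + \min\{1, c'\} = \nu - \mu + \min\{1, c, \nu - \tfrac{1}{3}\}$, which exceeds $\tfrac{4}{3}$ exactly under the hypothesis of the lemma. The main technical burden lies in the careful bookkeeping of these valuations, especially tracking the effective radius~$c'$ of convergence of~$\Lambda$; it is the $\nu - \tfrac{1}{3}$ cap coming from the convergence domain of the formal logarithm on the elliptic curve (and not the disk $D_{-c}$ on which $\phi$ itself converges) that appears in the minimum and hence in the stated condition.
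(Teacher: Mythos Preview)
Your proof is correct and follows essentially the same approach as the paper: both transport the question through the $2$-adic logarithm, derive the coefficient bounds $v_2(b_i) \ge \nu + (i-1)\min\{c,\nu-\tfrac{1}{3}\}$ from the valuation hypothesis on $t(\phi(\tau))$, and then estimate $v_2(\log Q_\tau - \log Q_{\tau'})$ term by term to land in~$\calK_{4/3}$. The only cosmetic differences are that the paper compares directly to $Q_1$ and $Q_2$ rather than splitting into your three regimes, and your supersingularity remark is superfluous (divisibility by~$2$ in $\calK \cap X_0(11)(K)$ trivially implies it in $X_0(11)(K)$); also note your labeling of case~(iii) has $\tau$ and $\tau'$ swapped relative to the expansion you then write down.
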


\begin{proof}
  Fix $0 \neq \tau \in \Z_2$ and write $n = v_2(\tau)$.
  By assumption, we have that $\phi(\tau) \in \calK_{\nu+n}$.
  Since $\nu > 1/3+\mu$, this implies that $\phi(\tau)$ is divisible by~$2^{\mu+n}$
  in~$\calK \cap X_0(11)(K)$ by Lemma~\ref{L:logiso}.
  Since $X_0(11)(K)[2] = 0$, there is then a unique point
  $Q_\tau \in \calK \cap X_0(11)(K)$ such that $\phi(\tau) = 2^{\mu+n} Q_\tau$.
  We now consider
  \[ \log \phi(\tau) = \gamma \bigl(\tau + a_2 \tau^2 + a_3 \tau^3 + \ldots\bigr) \,. \]
  We know by assumption and by Lemma~\ref{L:logiso} and its proof that
  $|\log \phi(\tau)| = |t(\phi(\tau))| = |2|^\nu\,|\tau|$ whenever
  $v_2(\tau) > -\min\{c, \nu-\tfrac{1}{3}\}$ (for $\tau \in \C_2$). This implies that
  $v_2(\gamma) = \nu$ and that
  \[ v_2(a_k) \ge (k-1) \min\{c, \nu-\tfrac{1}{3}\} \qquad \text{for all $k \ge 2$.} \]
  Writing $\tau = 2^n u$ with $u \in \Z_2^\times$, we then have that
  \[ \log Q_\tau = 2^{-\mu-n} \log \phi(\tau)
                 = \gamma 2^{-\mu} (u + 2^{n} a_2 u^2 + 2^{2n} a_3 u^3 + \ldots)
  \]
  and so
  \[ \log (Q_\tau-Q_1) = \log Q_\tau - \log Q_1
                       = \gamma 2^{-\mu} \bigl((u-1) + a_2 (2^n u^2-1) + a_3 (2^{2n}u^3-1) + \ldots\bigr) \,.
  \]
  If $n = 0$, then $v_2(\log(Q_\tau-Q_1)) \ge \nu-\mu+1 > \tfrac{4}{3}$, and if
  $\nu-\mu + \min\{1,c,\nu-\tfrac{1}{3}\} > \tfrac{4}{3}$, then
  $v_2(\log(Q_\tau-Q_1)) \ge \nu-\mu+\min\{1,v_2(a_2),v_2(a_3),\ldots\} > \tfrac{4}{3}$,
  so in both cases, Lemma~\ref{L:logiso} shows that $Q_\tau-Q_1$ is divisible by~$2$
  in~$\calK \cap X_0(11)(K)$. If $n \ge 1$, then we find that
  \[ \log (Q_\tau-Q_2) = \gamma 2^{-\mu} \bigl((u-1) + 2 a_2 (2^{n-1} u^2 - 1) + 2^2 a_3 (2^{2n-2} u^3 - 1)
                                                 + \ldots \bigr)\,,
  \]
  and we see that this has $2$-adic valuation $> \tfrac{4}{3}$, so $Q_\tau - Q_2$
  is divisible by~$2$.
\end{proof}

\begin{remark} \label{R:checkstar}
  We will apply this lemma in the following setting. We consider one of the
  remaining sets $D \in \calD$ and a point $P \in X_0(11)(K_E)$ such that $j(P) \in D$.
  Then there is an analytic map $\psi \colon \Z_2 \stackrel{\simeq}{\to} D \to X_0(11)(K_{E,2})$
  such that $\psi(0) = P$ and such that the second map in this composition
  inverts the $j$~function. We set $\phi(\tau) = \psi(\tau) - P$; then $\phi$
  is an analytic map into $\calK \cap X_0(11)(K_{E,2})$, which satisfies
  a polynomial relation $\Phi(\tau, t(\phi(\tau))) = 0$, where $\Phi \in K_E[x,y]$
  has degree~$3$ in~$x$ and degree~$12$ in~$y$ (this is because $j \colon X_0(11) \to \PP^1$
  has degree~$12$ and $t \colon X_0(11) \to \PP^1$ has degree~$3$).
  We find $\Phi$ explicitly by interpolation: we compute the pairs $(j(Q), t(Q))$
  for all points $Q = nP + T$, where $-5 \le n \le 5$ and $T \in X_0(11)(\Q)_{\tors} \simeq \Z/5\Z$.
  This gives us enough information to determine the $52$~coefficients of~$\Phi$
  (up to scaling). We then use Lemma~\ref{L:tech} to show that $\phi$ actually
  converges for $v_2(\tau) > -c$ for some $c > 1/3$ and that $v_2(t(\phi(\tau))) = \nu + v_2(\tau)$
  for $v_2(\tau) > -c$ (where $\nu = 2$, $3$ or~$5/11$ in the concrete cases considered).
  Finally, we check that $Q_1$ does not map into the image of the Selmer group
  (and that $Q_2$ does not, either, in the last case).
  This then verifies condition~(*) for $D$, $E$ and~$P$.
\end{remark}

\begin{lemma}
  Let $E \in \calE$ and let $D \in \calD$ be one of the sets associated
  to~$E$ in Lemma~\ref{L:list2}. Then the point~$P$ given in the table
  below satisfies~(*) for $E$ and~$D$. Here $\can(E)$ stands for the
  image on~$X_0(11)$ of the canonical point on~$X_E(11)$.
  \[ \renewcommand{\arraystretch}{1.2}
     \begin{array}{|c|c|c|c|} \hline
       E & D & P & j(P) \in D \setminus \{0, 12^3, \infty\} \\\hline
        54a1 & \{2^{-5} t^{-11} : t \in \Z_2\} & (16, 60)      & \text{\rm no} \\
        96a1 & 15 \cdot 2^6 + 2^{11} \Z_2      & {-\can(96a2)} & \text{\rm no} \\
        96a1 & {-2^6} + 2^{11} \Z_2            & \can(96a1)    & \text{\rm yes} \\
       864b1 & 2^9 + 2^{11} \Z_2               & \can(864b1)   & \text{\rm yes} \\
       864c1 & {-2^9} + 2^{11} \Z_2            & \can(864c1)   & \text{\rm yes} \\\hline
     \end{array}
  \]
\end{lemma}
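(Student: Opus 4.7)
The plan is to verify condition~(*) case by case, following the blueprint set out in Remark~\ref{R:checkstar}. For each pair $(E,D)$ and candidate point~$P$ in the table, I would first check directly whether $j(P) \in D$ (the last column of the table records the outcome); the cases where $j(P) \notin D$ are dispatched by the same argument, but yield the stronger conclusion that $D$ contains no image of a $K_E$-rational point at all. In every case, the canonical parameterization $\varphi \colon \Z_2 \to D$ from~\eqref{E:defD} lifts, via an inverse branch of~$j$ through~$P$, to an analytic map $\psi \colon \Z_2 \to X_0(11)(K_{E,2})$ with $\psi(0) = P$; translating by $-P$ gives $\phi(\tau) = \psi(\tau) - P$, an analytic map into the kernel of reduction $\calK \cap X_0(11)(K_{E,2})$.

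To describe $\phi$ concretely, I would compute, in each case, the bivariate polynomial $\Phi \in K_E[x,y]$ of bidegree $(3,12)$ such that $\Phi(\tau, t(\phi(\tau))) = 0$, by interpolating from the values $\bigl(j(nP+T), t(nP+T)\bigr)$ for $-5 \le n \le 5$ and $T$ a $\Q$-rational torsion point on~$X_0(11)$. This gives~$52$ sample points, enough to pin down $\Phi$ up to scaling. With $\Phi$ in hand, Lemma~\ref{L:tech} (applied with $e = 1, 2,$ or~$11$ depending on whether $D$ parameterizes away from the branch locus, near $j = 12^3$, or near $j = \infty$) provides a radius $c > 1/3$ such that $\phi$ converges on $v_2(\tau) > -c$ and furnishes the slope identity $v_2(t(\phi(\tau))) = \nu + v_2(\tau)$ for an explicit $\nu > 1/3$ determined by the Newton polygon of $\Phi(\tau,y)$. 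Numerically, $\nu$ will equal~$5/11$ for the first row, and~$2$ or~$3$ for the remaining rows.

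Now Lemma~\ref{L:qconst} applies: setting $\mu = \lceil \nu - \tfrac13\rceil - 1$, every nonzero $\tau \in \Z_2$ produces a unique $Q_\tau \in \calK \cap X_0(11)(K_{E,2})$ with $\phi(\tau) = 2^{\mu+v_2(\tau)} Q_\tau$, and all such $Q_\tau$ with $\tau \in \Z_2^\times$ are congruent to $Q_1$ modulo $2X_0(11)(K_{E,2})$; if the strengthened inequality $\nu-\mu+\min\{1,c,\nu-\tfrac13\} > \tfrac43$ fails, I also keep track of a second representative $Q_2$ for $\tau \in 2\Z_2\setminus\{0\}$. So to verify~(*) for the pair $(E,D)$ it suffices to check that the images of $Q_1$ (and, where necessary, $Q_2$) in $L_{E,2}^\times/L_{E,2}^{\times 2}$ (obtained by evaluating $x-\theta$ at a point doubling to $Q_i$, or by an explicit $2$-descent computation on $X_0(11)/K_{E,2}$) do not lie in the image of the Selmer group~$\Sel_2(X_0(11)/K_E)$ computed earlier.

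The routine parts of this plan (producing $\Phi$, invoking Lemmas~\ref{L:tech} and~\ref{L:qconst}) are systematic and can be executed uniformly for the five rows of the table. The main obstacle is the final Selmer-image check: it depends on whether the generators of~$\Sel_2(X_0(11)/K_E)$ computed (under~GRH) in Table~\ref{Table:KE} happen to meet the square class of $x(Q_i)-\theta$ inside $L_{E,2}^\times/L_{E,2}^{\times 2}$. This comparison requires careful tracking of factorizations in~$L_E$ and of the local embeddings $L_E \hookrightarrow L_{E,2}$, and must be carried out with \Magma; if, say, $Q_1$ turned out to lie in the Selmer image for some pair $(E,D)$, the plan would fail for that pair and one would have to iterate by replacing $P$ by a nearby rational point and repeating the construction. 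In practice I expect that the points $P$ tabulated (the canonical points of the relevant $X_E(11)$, or the small point $(16,60)$ for~$54a1$) have been chosen precisely so that this final check passes in every case.
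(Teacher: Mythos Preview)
Your plan follows Remark~\ref{R:checkstar} faithfully, but it overlooks a structural point that the paper's proof handles explicitly and that your uniform scheme does not address: the fiber $j^{-1}(D)\cap X_0(11)(K_{E,2})$ consists of \emph{several} analytic disks (up to sixteen, in fact), while your construction $\psi(0)=P$, $\phi=\psi-P$ parameterizes only one of them. Condition~(*) must be verified for \emph{every} $Q$ with $j(Q)\in D$, not just those in the disk through~$P$. The paper deals with this in two ways. For the last three rows it first shows---using the Selmer filter already computed and, for $96a1$, Fisher's explicit model of $X_{96a1}(11)$---that only one disk above~$D$ can carry a point whose image in $L_{E,2}^\times/L_{E,2}^{\times 2}$ lies in the Selmer image, and that disk is the one containing the canonical point; only then does it invoke the analytic machinery of Remark~\ref{R:checkstar}. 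Without this preliminary reduction your argument leaves the other disks untreated.

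For the second row $(96a1,\,15\cdot 2^6 + 2^{11}\Z_2)$ there is an additional problem: here $j(P)\notin D$, so there is no ``inverse branch of~$j$ through~$P$'' landing in~$D$ and your construction of $\psi$ with $\psi(0)=P$ does not make sense. The paper does not use the analytic-logarithm route at all in this case. Instead it notes that four disks above~$D$ have the same Selmer image as~$P$, subtracts~$P$ and halves on each, finds that three of the resulting half-differences already fall outside the Selmer image, and on the fourth disk halves once more to reach the same conclusion. This iterative halving verifies~(*) directly with $n\le 2$. Your plan would need a genuinely different treatment for this row. (The $54a1$ row is closer to what you describe, but note that the paper also switches to the alternative uniformizer $t'=-(x-5)/(y-5)$ there, for reasons specific to the geometry of the $j=\infty$ fiber.)
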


\begin{proof}
  The points~$P$ given in the table have the property that their image
  in~$G = L_{E,2}^\times/L_{E,2}^{\times 2}$ agrees with the image of those
  points in $j^{-1}(D) \cap X_0(11)(K_{E,2})$ whose image is in the image
  of the Selmer group. This means that for any point~$Q$ in one of the
  $2$-adic disks above~$D$ such that $Q$ maps into the image of the Selmer
  group, we have that $P - Q$ is divisible by~$2$ in~$X_0(11)(K_{E,2})$.

  We first consider the last three cases.
  In the last two cases only one
  (out of sixteen) of the disks above~$D$ maps into the image of the
  Selmer group; this must be the disk containing the image of the canonical
  point. For $96a1$ we use Fisher's explicit description of~$X_E(11)$
  and the $j$-map on it~\cite{Fisher2014} to obtain a partition of~$X_{96a1}(11)(\Q_2)$
  into $2$-adic disks;
  we find that there is only one residue disk in~$X_{96a1}(11)(\Q_2)$
  that maps to~$D$. Its image in~$X_0(11)(K_{96a1,2})$ must be one of the disks
  above~$D$ and this disk contains the image of the canonical point; the other
  disks above~$D$ can be excluded. So in each of these three cases
  the only disk in~$X_0(11)(K_{E,2})$ above~$D$ that we have to consider
  is the disk~$D'$ containing the image of the canonical point.
  We then follow the approach outlined in Remark~\ref{R:checkstar} above
  to verify~(*) for each of the last three entries in the table.

  Next we consider the other disk~$D$ for $E = 96a1$; this is the second entry
  in the table. There are four disks~$D'$ in~$X_0(11)(K_{E,2})$
  above~$D$ such that the image of~$D'$ in~$G$ is in the image of the Selmer group;
  this image is the same as that of~$P$. Taking the difference with~$P$
  and halving, we find that on three of these disks the image in~$G$ of
  the resulting points is not in the image of the Selmer group.
  On the fourth disk, the image is zero, so the points are again divisible
  by~$2$. After halving again, we find that the resulting points have
  image in~$G$ not in the image of the Selmer group. This verifies~(*)
  for this case (with $n \le 2$).

  Finally, we look at~$E = 54a1$. There is one unramified branch above $j = \infty$
  with the point at infinity of~$X_0(11)$ sitting in the center of the disk,
  and there is one point (with coordinates $(16, 60)$) with ramification index~$11$.
  We can parameterize
  the disk relevant to us by setting $j = 2^{-5} \tau^{-11}$ and solving for
  the $x$ and $y$-coordinates in~$\Q(\!\sqrt[11]{2})(\!(\tau)\!)$.
  We use the alternative uniformizer $t' = -(x-5)/(y-5)$ at the origin
  (the standard uniformizer~$t$ does not work well in this case, because
  it is $2$-adically small on the other branch above $j = \infty$),
  which has the additional benefit that it is of degree~$2$ instead of~$3$ as
  a function on~$X_0(11)$, leading to a smaller polynomial~$\Phi$.
  The statements of Lemmas~\ref{L:logiso} and~\ref{L:qconst} are unaffected
  by this change of uniformizer. Using the approach of Remark~\ref{R:checkstar},
  we find that the series~$\phi$ of Lemma~\ref{L:qconst} converges for
  $v_2(\tau) >  -5/11$ and satisfies $v_2(\phi(\tau)) = 5/11 + v_2(\tau)$.
  We can also check that for $\tau = 1$ and for $\tau = 2$ we obtain a point whose image
  in~$G$ is not in the image of the Selmer group, so (*) is verified in this
  case, too.
\end{proof}

To conclude the proof of Theorem~\ref{T:11}, it now only remains to
observe that the $j$-invariants $21952/9$ of~$96a1$ and $1536$ of~$864c1$
are not good (the condition on the $3$-adic valuation is violated),
so the only remaining point in~$X_{96a1}(11)(\Q)$ and in~$X_{864c1}(11)(\Q)$
does not lead to a primitive integral solution of our Generalized Fermat
Equation. The only remaining point in~$X_{864b1}(11)(\Q)$ is the canonical
point; it corresponds to the Catalan solutions.

\begin{remark}
  According to work by Ligozat~\cite{Ligozat}, the Jacobian~$J(11)$ of~$X(11)$
  splits up to isogeny (and over~$\Q(\sqrt{-11})$) into a product of
  eleven copies of~$X_0(11)$, ten copies of a second elliptic curve
  and five copies of a third elliptic curve (which is $X_\ns^+(11) = 121b1$).
  The powers of these three
  elliptic curves correspond to isotypical components of the representation
  of the automorphism group of the $j$-map $X(11) \to \PP^1$ on the Lie
  algebra of~$J(11)$; the splitting into these three powers therefore
  persists over~$\Q$ after twisting. Our approach uses the $11$-dimensional
  factor (it can be identified with the kernel of the trace map from
  $R_{K_E/\Q} X_0(11)_{K_E}$ to~$X_0(11)$, up to isogeny). It would be nice if one could
  use the $5$-dimensional factor instead in the hope of eliminating the
  dependence on~GRH, but so far we have not found a description that
  would allow us to work over a smaller field.
\end{remark}


\section{The Generalized Fermat Equation with exponents 2, 3, 13} \label{S:p=13}

In this section, we collect some partial results for the case $p = 13$.
More precisely, we show that the Frey curve associated to any putative
solution must have irreducible $13$-torsion Galois module and that
only trivial solutions can be associated to the two CM curves
in the list of Lemma~\ref{L:7curves}.

\subsection{Eliminating reducible 13-torsion} \label{S:13red} \strut

The case $p = 13$ is special in the sense that it is a priori
possible to have Frey curves with reducible $13$-torsion Galois modules.
In this respect, it is similar to $p = 7$; compare~\cite{PSS2007}.
To deal with this possibility, we note that such a Frey curve~$E$
will have a Galois-stable subgroup~$C$ of order~$13$ and so gives rise
to a rational point~$P_E$ on~$X_0(13)$, which is a curve of genus~$0$.
The Galois action on~$C$ is via some character $\chi \colon G_{\Q} \to \F_{13}^\times$,
which can be ramified at most at~$2$, $3$ and~$13$. Associated to~$\chi$
is a twist~$X_\chi(13)$ of~$X_1(13)$ that classifies elliptic curves
with a cyclic subgroup of order~$13$ on which the Galois group acts via~$\chi$;
the Frey curve~$E$ corresponds to a rational point on~$X_\chi(13)$
that maps to~$P_E$ under the canonical covering map $X_\chi(13) \to X_0(13)$.
The covering $X_1(13) \to X_0(13)$ is Galois of degree~$6$ with Galois group naturally
isomorphic to $\F_{13}^\times/\{\pm 1\}$; the coverings $X_\chi(13) \to X_0(13)$
are twisted forms of it, corresponding to the composition
\[ G_{\Q} \stackrel{\chi}{\To} \F_{13}^\times \To \F_{13}^\times/\{\pm 1\} \simeq \Z/6\Z \,, \]
which is an element of $H^1(\Q, \Z/6\Z; \{2,3,13\})$
(where $H^1(K, M; S)$ denotes the subgroup of~$H^1(K, M)$ of cocycle classes
unramified outside~$S$). We can describe this group in the form
\begin{align*}
  H^1(\Q, \Z/6\Z; \{2,3,13\})
    &= H^1(\Q, \Z/2\Z; \{2,3,13\}) \oplus H^1(\Q, \Z/3\Z; \{2,3,13\}) \\
    &\simeq \langle -1, 2, 3, 13 \rangle_{\Q^\times/\Q^{\times 2}}
           \oplus \langle \omega, \tfrac{4+\omega}{3-\omega}
                        \rangle_{\Q(\omega)^\times/\Q(\omega)^{\times 3}} \,,
\end{align*}
where $\omega$ is a primitive cube root of unity. One can check that
a model of~$X_1(13)$ is given by
\[ y^2 = (v+2)^2 + 4\,, \qquad z^3 - v z^2 - (v+3) z - 1 = 0\,; \]
the map to $X_0(13) \simeq \PP^1$ is given by the $v$-coordinate.
The second equation can be written in the form
\[ \Bigl(\frac{z - \omega}{z - \omega^2}\Bigr)^3 = \frac{v - 3\omega}{v - 3\omega^2} \,, \]
which shows that it indeed gives a cyclic covering of~$\PP^1_v$
by~$\PP^1_z$. If $d$ is a squarefree integer representing an element
in~$\langle -1, 2, 3, 13 \rangle_{\Q^\times/\Q^{\times 2}}$ and $\gamma$
represents an element of~$\langle \omega, \tfrac{4+\omega}{3-\omega}
                        \rangle_{\Q(\omega)^\times/\Q(\omega)^{\times 3}}$,
then the corresponding twist is
\[ X_\chi(13) \colon d y^2 = (v+2)^2 + 4\,, \qquad
    \gamma \Bigl(\frac{z - \omega}{z - \omega^2}\Bigr)^3 = \frac{v - 3\omega}{v - 3\omega^2} \,.
\]
We note that the first equation defines a conic that has no real points when $d < 0$
and has no $3$-adic points when $3 \mid d$. This restricts us to $d \in \{1,2,13,26\}$.
We find hyperelliptic equations for the $36$~remaining curves (recall that
$X_1(13)$ has genus~$2$). It turns out that only eight of them have $\ell$-adic
points for $\ell \in \{2,3,13\}$. We list them in Table~\ref{Table:curves13a}.
In the table we give $d$ and~$\delta$, where $\gamma = \delta/\bar{\delta}$ and
the bar denotes the non-trivial automorphism of~$\Q(\omega)$. We denote the curve
in row~$i$ of the table by~$C_i$.

\begin{table}[htb]
\renewcommand{\arraystretch}{1.25}
\[ \begin{array}{|c|c|c|c|} \hline
    \text{no.} &  d &        \delta & f \\ \hline
             1 &  1 &             1 & x^6 - 2 x^5 + x^4 - 2 x^3 + 6 x^2 - 4 x + 1 \\
             2 &  2 &        \omega & 16 x^6 + 24 x^5 + 18 x^4 + 76 x^3 + 138 x^2 + 72 x + 16 \\
             3 &  2 &    \omega + 4 & 208 x^6 - 312 x^5 + 234 x^4 - 988 x^3 + 1794 x^2 - 936 x + 208 \\
             4 &  2 & -3 \omega - 4 & 16 x^6 - 24 x^5 + 106 x^4 - 252 x^3 + 226 x^2 - 72 x + 16 \\\hline
             5 & 13 &  3 \omega - 1 & x^6 + 2 x^5 + x^4 + 2 x^3 + 6 x^2 + 4 x + 1 \\
             6 & 26 &    \omega + 4 & 16 x^6 - 24 x^5 + 18 x^4 - 76 x^3 + 138 x^2 - 72 x + 16 \\
             7 & 26 &        \omega & 208 x^6 + 312 x^5 + 234 x^4 + 988 x^3 + 1794 x^2 + 936 x + 208 \\
             8 & 26 & -3 \omega - 4 & 16 x^6 - 24 x^5 + 106 x^4 - 252 x^3 + 226 x^2 - 72 x + 16 \\
             \hline
   \end{array}
\]

\medskip

\caption{Curves $X_\chi(13)$ with local points, given as $y^2 = f(x)$.}
\label{Table:curves13a}
\end{table}

We see that the last four curves are isomorphic to the first four. This is
because of the canonical isomorphism $X_1(13) \simeq X_\mu(13)$, where the
latter classifies elliptic curves with a subgroup isomorphic to~$\mu_{13}$.
On the level of~$X_0(13)$, this comes from the Atkin-Lehner involution,
which in terms of our coordinate~$v$ is given by $v \mapsto (v+12)/(v-1)$.

The first curve~$C_1$ is~$X_1(13)$; it is known that its Jacobian has Mordell-Weil
rank zero and that the only rational points on~$X_1(13)$ are six cusps
(there are no elliptic curves over~$\Q$ with a rational point of order~$13$).
For the curves $C_2$, $C_3$ and~$C_4$, a $2$-descent on the Jacobian as in~\cite{Stoll2001}
gives an upper bound of~$2$ for the rank. $C_2$ and~$C_4$
each have six more or less obvious rational points; their differences
generate a subgroup of rank~$2$ of the Mordell-Weil group, so their Jacobians
indeed have rank~$2$. On~$C_3$ one does not find small rational
points, and indeed it turns out that its $2$-Selmer set is empty, which
proves that it has no rational points. See~\cite{BS2009} for how to compute
the $2$-Selmer set. It remains to consider $C_2$ and~$C_4$.

We note that the $j$-invariant map on~$\PP^1_v \simeq X_0(13)$ is given by
\[ j = \frac{(v^2 + 3 v + 9)(v^4 + 3 v^3 + 5 v^2 - 4 v - 4)^3}{v-1} \,. \]
The obvious orbits of points on the six curves that do have rational points then give
points on~$X_0(13)$ with $v = \infty$, $0$, $-4$, $1$, $-12$, $-8/5$ and
$j$-invariants
\[ \infty\,, \quad \frac{12^3}{3}\,, \quad -\frac{12^3 \cdot 13^4}{5}\,, \quad \infty\,, \quad
  -\frac{12^3 \cdot 4079^3}{3}\,, \quad -\frac{12^3 \cdot (17 \cdot 29)^3 \cdot 13}{5^{13}}\,,
\]
respectively. None of these correspond to primitive solutions of $x^2 + y^3 = z^{13}$,
except $j = \infty$, which is related to the trivial solutions $(\pm 1, -1, 0)$.
So to rule out solutions whose Frey curves have reducible $13$-torsion,
it will suffice to show that there are no rational points on $C_2$ and~$C_4$
other than the orbit of six points containing the points at infinity.

Computing the $2$-Selmer sets, we find in both cases that its elements
are accounted for by the points in the known orbit. So in each orbit of rational points
under the action of the automorphism group, there is a point that lifts to the $2$-covering
of the curve that lifts the two points at infinity. So it is enough to look
at rational points on this $2$-covering.

We first consider~$C_2$. Its polynomial~$f$ splits off three
linear factors over~$K$, where $K$ is the field obtained by adjoining
one of the roots~$\alpha$ of~$f$ to~$\Q$. The relevant $2$-covering then maps
over~$K$ to the curve $y^2 = (x - \alpha) g(x)$, where $g$ is the remaining
cubic factor. This is an elliptic curve (with two $K$-points at infinity
and one with $x = \alpha$). Computing its $2$-Selmer group (this involves
obtaining the class group of a number field of degree~$18$, which we can
do without assuming GRH; the computation took a few days), we find that
it has rank~$1$. We know three $K$-points on the elliptic curve; they map
surjectively onto the Selmer group. So we can do an Elliptic Curve Chabauty
computation, which tells us that the only $K$-points whose $x$-coordinate
is rational are the two points at infinity. This in turn implies that
the known rational points on~$C_2$ are the six points in the orbit of the
points at infinity.

Now we consider~$C_4$. Here the field generated by a
root of~$f$ is actually Galois (with group~$S_3$). We work over its cubic subfield~$L$.
Over~$L$, $f$ splits as $16$~times the product of three monic quadratic
factors $h_1$, $h_2$, $h_3$,
and we consider the elliptic curve~$E$ given as $y^2 = h_1(x) h_2(x)$,
with one of the points at infinity as the origin.
This curve has full $2$-torsion over~$L$, so a $2$-descent is easily
done unconditionally. We find that the $2$-Selmer group has rank~$3$
and that the difference of the two points at infinity has infinite order,
so the Mordell-Weil rank of~$E$ over~$L$ is~$1$. An Elliptic Curve Chabauty
computation then shows that the only $K$-points on~$E$ with rational
$x$-coordinate are those at infinity and those with $x$-coordinate~$-3$.
Since there are no rational points on~$C_4$ with $x$-coordinate~$-3$,
this shows as above for~$C_2$ that the only rational points are
the six points in the orbit of the points at infinity.

This proves the following statement.

\begin{lemma} \label{L:13nored}
  Let $(a,b,c) \in \Z^3$ be a non-trivial primitive solution of $x^2 + y^3 = z^{13}$.
  Then the $13$-torsion Galois module $E_{(a,b,c)}[13]$ of the associated
  Frey curve is irreducible.
\end{lemma}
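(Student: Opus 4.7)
The plan is to follow the strategy already laid out in the discussion preceding the lemma. Suppose $E = E_{(a,b,c)}$ has reducible $13$-torsion; then $E$ admits a Galois-stable cyclic subgroup~$C$ of order~$13$, with Galois acting on~$C$ via a character $\chi \colon G_\Q \to \F_{13}^\times$. Since $E$ has good reduction outside~$\{2,3,c\}$ and since level-lowering-type arguments (applied to a suitable quadratic twist, as in Lemma~\ref{L:7curves}) restrict the conductor, $\chi$ is unramified outside $\{2,3,13\}$. So $E$ corresponds to a rational point on one of the twists $X_\chi(13)$ of $X_1(13)$ enumerated by $H^1(\Q,\Z/6\Z;\{2,3,13\}) \simeq \langle -1,2,3,13\rangle \oplus \langle \omega, (4+\omega)/(3-\omega)\rangle$.

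Next I would cut the list down by local considerations. The conic $dy^2 = (v+2)^2 + 4$ has no real points for $d<0$ and no $3$-adic points for $3\mid d$, so $d \in \{1,2,13,26\}$, leaving $36$ twists. A direct local-solubility check at $\ell \in \{2,3,13\}$ eliminates all but the eight curves $C_1,\ldots,C_8$ listed in Table~\ref{Table:curves13a}. The Atkin-Lehner involution $v \mapsto (v+12)/(v-1)$ induces isomorphisms $C_{i+4} \simeq C_i$ for $i=1,\ldots,4$, so it suffices to determine the rational points on $C_1,C_2,C_3,C_4$ and then compute the resulting $j$-invariants via the explicit formula for $j$ in terms of~$v$, discarding any $j$ that does not correspond to a primitive solution of~\eqref{E:GFE}.

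For $C_1 = X_1(13)$ the rational points are the six known cusps (the Mordell-Weil group of $J_1(13)$ is finite, and no elliptic curve over~$\Q$ has a rational $13$-torsion point); these produce only $j = \infty$, which gives the trivial solutions $(\pm 1,-1,0)$. For $C_3$, a $2$-descent on the Jacobian \`a la \cite{Stoll2001} together with the computation of the $2$-Selmer set as in~\cite{BS2009} shows this set is empty, ruling $C_3$ out unconditionally.

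The main obstacle is $C_2$ and $C_4$, since both have rank-$2$ Jacobians with a visible orbit of six rational points (including the two points at infinity), and neither admits a direct Chabauty argument over~$\Q$. My plan is to use Elliptic Curve Chabauty on a $2$-covering that lifts the two rational points at infinity, which by the $2$-Selmer set computation captures every orbit of rational points. For $C_2$, the defining polynomial splits off three linear factors over the degree-$6$ field $K = \Q(\alpha)$ with $\alpha$ a root of~$f$, and the covering becomes an elliptic curve $y^2 = (x-\alpha)g(x)$ over~$K$; after computing the $2$-Selmer group (requiring a class group in degree~$18$, which I expect to be feasible unconditionally), I would verify the rank is~$1$ and run Elliptic Curve Chabauty~\cite{Bruin2003} to show that the only $K$-points with rational $x$-coordinate are the two points above~$\infty$. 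For $C_4$, I would exploit the Galois closure structure to descend to the cubic subfield~$L$, where $f$ factors as $16$ times a product of three quadratics, and work with $y^2 = h_1(x) h_2(x)$, which has full $2$-torsion over~$L$, making the $2$-descent unconditional; again an Elliptic Curve Chabauty computation should identify all $K$-points of rational $x$-coordinate. Once these computations yield only the expected six rational points on each of $C_2$ and $C_4$, every resulting $j$-invariant will be among $\{\infty, 12^3/3, -12^3\cdot 13^4/5, -12^3\cdot 4079^3/3, -12^3 \cdot (17\cdot 29)^3 \cdot 13/5^{13}\}$, none of which is of the form $12^3 b^3/c^{13}$ for a non-trivial primitive solution, completing the proof.
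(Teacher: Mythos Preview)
Your proposal is correct and follows essentially the same approach as the paper: reduce to the eight locally soluble twists of~$X_1(13)$, use Atkin--Lehner to cut to $C_1,\ldots,C_4$, dispose of~$C_1$ (cusps only) and~$C_3$ (empty $2$-Selmer set), and handle $C_2$ and~$C_4$ via a $2$-covering plus Elliptic Curve Chabauty over $\Q(\alpha)$ and the cubic subfield~$L$, respectively. The only detail you did not anticipate is that for~$C_4$ the Chabauty computation also returns $x=-3$, which must then be checked not to lift to a rational point on~$C_4$; otherwise your outline matches the paper step for step.
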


\subsection{Dealing with the CM curves} \strut

$13$ is congruent to~$1$ both mod~$3$ and mod~$4$, so the $13$-torsion
Galois representations on~$27a1$ and on~$288a1$ both have image contained
in the normalizer of a split Cartan subgroup. But unfortunately the general
result of~\cite{BPR2013} does not apply in this case. We can, however,
use the approach taken in Section~\ref{S:13red} above. Since we are in
the split case, the curves have cyclic subgroups of order~$13$ defined
over a quadratic field~$K$, which is $\Q(\omega)$ for~$27a1$ (with $\omega$
a primitive cube root of unity) and~$\Q(i)$ for~$288a1$. We find the
twist of~$X_1(13)$ over~$K$ that corresponds to the Galois representation
over~$K$ on this cyclic subgroup. Finding the twist is not entirely trivial,
since the points on~$X_0(13)$ corresponding to $27a1$ or to~$288a1$ are
branch points for the covering $X_1(13) \to X_0(13)$ (of ramification degree~$3$,
respectively~$2$). In the case of~$27a1$ we use a little trick: the isogenous
curve~$27a2$ has isomorphic Galois representation, but $j$-invariant $\neq 0$,
so the corresponding point in~$X_0(13)(K)$ lifts to a unique twist, which
must be the correct one also for~$27a1$. Since cube roots of unity are
in~$K$, we can make a coordinate change so that the automorphism of order~$3$
is given by multiplying the $x$-coordinate by~$\omega$. We obtain the following
simple model over $K = \Q(\omega)$ of the relevant twist of~$X_1(13)$:
\[ C_{27a1} \colon y^2 = x^6 + 22 x^3 + 13 \,. \]
The points coming from~$27a1$ are the two points at infinity, and the points
coming from~$27a2$ are the six points whose $x$-coordinate is a cube root
of~unity.

For $288a1$, we figure out the quadratic part of the sextic twist (the cubic
part is unique in this case) by looking at the Galois action on the cyclic
subgroup explicitly. We find that the correct twist of~$X_1(13)$ is
\[ C_{288a1} \colon y^2 = 12 i x^5 + (30 i + 33) x^4 + 66 x^3 + (-30 i + 33) x^2 - 12 i x \,. \]
Here the points coming from~$288a1$ are the ramification points $(0,0)$
and~$(-1,0)$ and the (unique) point at infinity. There are (at least) six further
points over~$\Q(i)$ on this curve, forming an orbit under the automorphism
group, of which $\bigl((4 i - 3)/6, 35/36\bigr)$ is a representative.

As a first step, we compute the $2$-Selmer group of the Jacobian~$J$ of each
of the two curves. In both cases, we find an upper bound of~$2$ for the
rank of~$J(K)$. The differences of the known points on the curve generate
a group of rank~$2$, so we know a subgroup of finite index of~$J(K)$.
It is easy to determine the torsion subgroup, which is $\Z/3\Z$ for~$C_{27a1}$
and $\Z/2\Z \times \Z/2\Z$ for~$C_{288a1}$. Using the reduction modulo several
good primes of~$K$, we check that our subgroup is saturated at the primes
dividing the group order of the reductions for the primes above~$7$ for~$C_{27a1}$,
or the primes above~$5$ for~$C_{288a1}$. We also use this reduction information
for a bit of Mordell-Weil sieving (compare~\cite{BS2010}) to show that
any point in~$C_{27a1}(\Q(\omega))$ with rational $j$-invariant must
reduce modulo both primes above~$7$ to the image of one of the known eight points
(the same for both primes),
and that any point in~$C_{288a1}(\Q(i))$ with rational $j$-invariant
must reduce modulo both primes above~$5$ to the image of one of the three
points coming from~$288a1$ (the other six points have $j$ in~$\Q(i) \setminus \Q$).

It remains to show that these points are the only points in their residue
classes mod~$7$, respectively, mod~$5$. For this, we use the criterion
in~\cite{Siksek2013}*{Theorem~2}. We compute the integrals to sufficient
precision and then check that the pair of differentials killing~$J(K)$
is `transverse' mod~$7$ (or~$5$) at each of the relevant points, which
comes down to verifying the assumption in Siksek's criterion. Note that
we apply Chabauty's method for a genus~$2$ curve when the rank is~$2$;
this is possible because we are working over a quadratic field. See the
discussion in~\cite{Siksek2013}*{Section~2}.

We obtain the following result.

\begin{lemma} \label{L:13noCM}
  Let $(a,b,c) \in \Z^3$ be a non-trivial primitive solution of $x^2 + y^3 = z^{13}$.
  Then the $13$-torsion Galois module $E_{(a,b,c)}[13]$ of the associated
  Frey curve is, up to quadratic twist, symplectically isomorphic to~$E[13]$ for some
  $E \in \{96a2,864a1,864b1,864c1\}$.
\end{lemma}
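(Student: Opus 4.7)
The plan is to combine Theorem~\ref{T:nominus} with Lemma~\ref{L:13nored} to reduce the problem, and then to treat the remaining CM cases $E = 27a1$ and $E = 288a1$ by explicit analysis of rational points on certain twists of~$X_1(13)$ over quadratic fields. By Theorem~\ref{T:nominus} applied to $p = 13 \equiv 13 \bmod 24$, together with Lemma~\ref{L:13nored}, the only twists still needing exclusion are those attached to $27a1$, $288a1$ and~$288a2$. Since $(2/13) = -1$, the $2$-isogeny $288a1 \to 288a2$ induces an anti-symplectic isomorphism on $13$-torsion, so $X_{288a2}(13) \simeq X^-_{288a1}(13)$; in particular any isomorphism (symplectic or anti-symplectic) between $E_{(a,b,c)}^{(d)}[13]$ and $288a1[13]$ covers the $288a2$ case simultaneously.

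The crucial observation is that $13$ splits in both $\Z[\omega]$ and $\Z[i]$, so the mod-$13$ representations of $27a1$ and of $288a1$ have image contained in the normalizer of a split Cartan subgroup of~$\GL_2(\F_{13})$. Thus $E[13]$ is irreducible as a $G_\Q$-module but becomes reducible over the quadratic CM field~$K$ (with $K = \Q(\omega)$ for $27a1$ and $K = \Q(i)$ for $288a1$), decomposing into two explicit characters of~$G_K$. If the Frey curve~$E'$ satisfies $E'[13] \simeq E[13]$ up to quadratic twist, then $E'$ also acquires, over~$K$, a cyclic subgroup of order~$13$ on which $G_K$ acts by one of a finite list of characters~$\chi$ (one for each twist $d \in \{\pm 1, \pm 2, \pm 3, \pm 6\}$). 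Hence $E'$ yields a $K$-rational point on a twisted modular curve~$C_E$ (a twist of~$X_1(13)$), which has genus~$2$ over~$K$. The task reduces to determining $C_E(K)$ explicitly and checking that none of the resulting $j$-invariants arise from non-trivial primitive solutions.

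To carry this out, one first writes down each twist explicitly. The construction is delicate because the points of~$X_0(13)$ above $j(27a1) = 0$ and $j(288a1) = 12^3$ are branch points of $X_1(13) \to X_0(13)$ (ramification indices~$3$ and~$2$). For~$27a1$ one circumvents this by working first with the isogenous curve~$27a2$, which has the same $13$-torsion representation but generic $j$-invariant, lifts uniquely to a twist, and then transfers back to~$27a1$; the presence of~$\omega$ in~$K$ allows a clean coordinate change. For~$288a1$ one computes the correct sextic twist by tracking the Galois action on the cyclic subgroup directly. With hyperelliptic models of~$C_E$ in hand, perform a $2$-descent on~$\Jac(C_E)/K$ to bound the Mordell-Weil rank, and locate enough $K$-points so that their differences generate a subgroup realizing this bound. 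Reductions at several small primes of~$K$ are used both to saturate the subgroup and to carry out a Mordell-Weil sieve restricting the possible residue classes of additional $K$-points with rational $j$-invariant. Finally, apply Siksek's generalization of Chabauty to curves over number fields~\cite{Siksek2013} to show each surviving residue class contains a unique $K$-point, and verify case by case that these points correspond only to trivial solutions.

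The main obstacle is the Chabauty step: since one expects $\mathrm{rank}\,\Jac(C_E)(K) = 2$, which equals the genus~$2$, classical Chabauty over~$\Q$ fails. The saving grace is that $[K:\Q] = 2$, making the rank strictly smaller than $g \cdot [K:\Q] = 4$; Siksek's criterion then applies provided one can verify a non-degeneracy (``transversality'') condition on a pair of $K$-rational differentials annihilating the Mordell-Weil group, at each residue class surviving the sieve. Producing a tight enough rank bound (matched by explicit points), pushing the sieve far enough, and checking transversality to the required $p$-adic precision are the technical heart of the argument; everything else is essentially a finite computation.
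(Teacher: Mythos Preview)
Your proposal is correct and follows essentially the same route as the paper: reduce via Theorem~\ref{T:nominus} and Lemma~\ref{L:13nored} to the CM curves $27a1$, $288a1$, $288a2$; exploit that $13$ splits in both CM orders so the representation becomes reducible over the quadratic CM field~$K$; pass to a single genus-$2$ twist~$C_E$ of~$X_1(13)$ over~$K$ (handling the branch-point issue for~$27a1$ via the $27a2$ trick and for~$288a1$ by direct computation of the sextic twist); then determine $C_E(K)$ by $2$-descent, saturation checks, a Mordell-Weil sieve, and Siksek's Chabauty over number fields.

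One small point of phrasing: your parenthetical ``(one for each twist $d \in \{\pm 1, \pm 2, \pm 3, \pm 6\}$)'' suggests a family of twists of~$X_1(13)$, but in fact (as you then correctly write) only a single twist~$C_E$ is needed, since the character on the cyclic subgroup is fixed by~$E$ and one works with the already-twisted Frey curve $E'^{(d)}$ rather than with~$E'$ itself; this is exactly what the paper does.
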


\begin{proof}
  By Lemma~\ref{L:13nored}, $E_{(a,b,c)}[13]$ is irreducible, so by
  Theorem~\ref{T:nominus}, it is symplectically isomorphic to~$E[13]$, where
  $E$ is one of the given curves or one of the CM~curves $27a1$, $288a1$
  or~$288a2$. These latter three are excluded by the computations reported
  on above.
\end{proof}

A \Magma\ script that performs the necessary computations for the results
in this section is available as~\texttt{section8.magma} at~\cite{programs}.
It relies on \texttt{X1\_13\_opt.magma}, which is available at the same location.


\section{Possible extensions}

Since $X_0(17)$ and~$X_0(19)$ are elliptic curves like~$X_0(11)$, the approach
taken in this paper to treat the case $p = 11$ has a chance to also work
for $p = 17$ and/or $p = 19$. Since $X_0(17)$ has a rational point of order~$4$
and $X_0(19)$ has a rational point of order~$3$, suitable Selmer groups
can be computed with roughly comparable effort (requiring arithmetic information
of number fields of degree~$36$ or~$40$). This is investigated in ongoing work by the
authors. For $p = 13$, results beyond those obtained in the preceding section
are likely harder to obtain than for $p = 17$ or $p = 19$, since $X_0(13)$
is a curve of genus~$0$. We can try to work with twists of~$X_1(13)$,
which is a curve of genus~$2$; our approach would require us to find
the (relevant) points on such a twist over a field of degree~$14$.
The standard method of $2$-descent on the Jacobian of such a curve would
require working with a number field of degree $6 \cdot 14 = 84$, which is
beyond the range of feasibility of current algorithms (even assuming GRH).


\begin{bibdiv}
\begin{biblist}

\bib{BDMTV}{article}{
   author={Balakrishnan, Jennifer S.},
   author={Dogra, Netan},
   author={M\"uller, J. Steffen},
   author={Tuitman, Jan},
   author={Vonk, Jan},
   title={Explicit Chabauty-Kim for the split Cartan modular curve of level~13},
   journal={Ann. of Math. (2)},
   volume={189},
   date={2019},
   number={3},
   pages={885--944},
   doi={10.4007/annals.2019.189.3.6},
}

\bib{Baran2014}{article}{
   author={Baran, Burcu},
   title={An exceptional isomorphism between modular curves of level~13},
   journal={J. Number Theory},
   volume={145},
   date={2014},
   pages={273--300},
   issn={0022-314X},
   review={\MR{3253304}},
   doi={10.1016/j.jnt.2014.05.017},
}

\bib{BBF}{article}{
   author={Bennett, Michael A.},
   author={Bruni, Carmen},
   author={Freitas, Nuno},
   title={Sums of two cubes as twisted perfect powers, revisited},
   journal={Algebra Number Theory},
   volume={12},
   date={2018},
   number={4},
   pages={959--999},
   issn={1937-0652},
   review={\MR{3830208}},
   doi={10.2140/ant.2018.12.959},
}

\bib{BPR2013}{article}{
   author={Bilu, Yuri},
   author={Parent, Pierre},
   author={Rebolledo, Marusia},
   title={Rational points on $X^+_0(p^r)$},
   language={English, with English and French summaries},
   journal={Ann. Inst. Fourier (Grenoble)},
   volume={63},
   date={2013},
   number={3},
   pages={957--984},
   issn={0373-0956},
   review={\MR{3137477}},
   doi={10.5802/aif.2781},
}

\bib{Magma}{article}{
   author={Bosma, Wieb},
   author={Cannon, John},
   author={Playoust, Catherine},
   title={The Magma algebra system. I. The user language},
   note={Computational algebra and number theory (London, 1993)},
   journal={J. Symbolic Comput.},
   volume={24},
   date={1997},
   number={3-4},
   pages={235--265},
   issn={0747-7171},
   review={\MR{1484478}},
   doi={10.1006/jsco.1996.0125},
}

\bib{Brown2012}{article}{
   author={Brown, David},
   title={Primitive integral solutions to $x^2+y^3=z^{10}$},
   journal={Int. Math. Res. Not. IMRN},
   date={2012},
   number={2},
   pages={423--436},
   issn={1073-7928},
   review={\MR{2876388 (2012k:11036)}},
}

\bib{Bruin1999}{article}{
   author={Bruin, Nils},
   title={The Diophantine equations $x^2\pm y^4=\pm z^6$ and $x^2+y^8=z^3$},
   journal={Compositio Math.},
   volume={118},
   date={1999},
   number={3},
   pages={305--321},
   issn={0010-437X},
   review={\MR{1711307 (2001d:11035)}},
   doi={10.1023/A:1001529706709},
}

\bib{Bruin2003}{article}{
   author={Bruin, Nils},
   title={Chabauty methods using elliptic curves},
   journal={J. Reine Angew. Math.},
   volume={562},
   date={2003},
   pages={27--49},
   issn={0075-4102},
   review={\MR{2011330 (2004j:11051)}},
   doi={10.1515/crll.2003.076},
}

\bib{Bruin2005}{article}{
   author={Bruin, Nils},
   title={The primitive solutions to $x^3+y^9=z^2$},
   journal={J. Number Theory},
   volume={111},
   date={2005},
   number={1},
   pages={179--189},
   issn={0022-314X},
   review={\MR{2124048 (2006e:11040)}},
   doi={10.1016/j.jnt.2004.11.008},
}

\bib{BPS}{article}{
   author={Bruin, Nils},
   author={Poonen, Bjorn},
   author={Stoll, Michael},
   title={Generalized explicit descent and its application to curves of genus~3},
   journal={Forum Math. Sigma},
   volume={4},
   date={2016},
   pages={e6, 80},
   issn={2050-5094},
   review={\MR{3482281}},
   doi={10.1017/fms.2016.1},
}

\bib{BS2009}{article}{
   author={Bruin, Nils},
   author={Stoll, Michael},
   title={Two-cover descent on hyperelliptic curves},
   journal={Math. Comp.},
   volume={78},
   date={2009},
   number={268},
   pages={2347--2370},
   issn={0025-5718},
   review={\MR{2521292 (2010e:11059)}},
   doi={10.1090/S0025-5718-09-02255-8},
}

\bib{BS2010}{article}{
   author={Bruin, Nils},
   author={Stoll, Michael},
   title={The Mordell-Weil sieve: proving non-existence of rational points on curves},
   journal={LMS J. Comput. Math.},
   volume={13},
   date={2010},
   pages={272--306},
   issn={1461-1570},
   review={\MR{2685127}},
   doi={10.1112/S1461157009000187},
}

\bib{Cent2015}{article}{
   author={Centeleghe, Tommaso Giorgio},
   title={Integral Tate modules and splitting of primes in torsion fields of elliptic curves},
   journal={Int. J. Number Theory},
   volume={12},
   date={2016},
   number={1},
   pages={237--248},
   issn={1793-0421},
   review={\MR{3455277}},
   doi={10.1142/S1793042116500147},
}

\bib{Chen}{article}{
   author={Chen, Imin},
   title={The Jacobians of non-split Cartan modular curves},
   journal={Proc. London Math. Soc. (3)},
   volume={77},
   date={1998},
   number={1},
   pages={1--38},
   issn={0024-6115},
   review={\MR{1625491}},
   doi={10.1112/S0024611598000392},
}

\bib{CohenBook}{book}{
   author={Cohen, Henri},
   title={Number theory. Vol. II. Analytic and modern tools},
   series={Graduate Texts in Mathematics},
   volume={240},
   publisher={Springer, New York},
   date={2007},
   pages={xxiv+596},
   isbn={978-0-387-49893-5},
   review={\MR{2312338}},
}

\bib{CremonaBook}{book}{
   author={Cremona, J. E.},
   title={Algorithms for modular elliptic curves},
   edition={2},
   publisher={Cambridge University Press, Cambridge},
   date={1997},
   pages={vi+376},
   isbn={0-521-59820-6},
   review={\MR{1628193 (99e:11068)}},
}

\bib{DahmenPhD}{thesis}{
    author={Dahmen, S. R.},
    title={Classical and modular methods applied to Diophantine equations},
    type={PhD thesis},
    address={Utrecht University},
    eprint={https://dspace.library.uu.nl/handle/1874/29640},
    date={2008},
}

\bib{DDT}{article}{
   author={Darmon, Henri},
   author={Diamond, Fred},
   author={Taylor, Richard},
   title={Fermat's last theorem},
   conference={
      title={Current developments in mathematics, 1995 (Cambridge, MA)},
   },
   book={
      publisher={Int. Press, Cambridge, MA},
   },
   date={1994},
   pages={1--154},
   review={\MR{1474977}},
}

\bib{DarmonGranville}{article}{
   author={Darmon, Henri},
   author={Granville, Andrew},
   title={On the equations $z^m=F(x,y)$ and $Ax^p+By^q=Cz^r$},
   journal={Bull. London Math. Soc.},
   volume={27},
   date={1995},
   number={6},
   pages={513--543},
   issn={0024-6093},
   review={\MR{1348707 (96e:11042)}},
   doi={10.1112/blms/27.6.513},
}

\bib{Smit_Edixhoven}{article}{
   author={de Smit, Bart},
   author={Edixhoven, Bas},
   title={Sur un r\'esultat d'Imin Chen},
   language={French, with English and French summaries},
   journal={Math. Res. Lett.},
   volume={7},
   date={2000},
   number={2-3},
   pages={147--153},
   issn={1073-2780},
   review={\MR{1764312}},
   doi={10.4310/MRL.2000.v7.n2.a1},
}

\bib{DDroot2}{article}{
   author={Dokchitser, Tim},
   author={Dokchitser, Vladimir},
   title={Root numbers of elliptic curves in residue characteristic~2},
   journal={Bull. Lond. Math. Soc.},
   volume={40},
   date={2008},
   number={3},
   pages={516--524},
   issn={0024-6093},
   review={\MR{2418807}},
   doi={10.1112/blms/bdn034},
}

\bib{DFGS2014}{article}{
   author={Dose, Valerio},
   author={Fern{\'a}ndez, Julio},
   author={Gonz{\'a}lez, Josep},
   author={Schoof, Ren{\'e}},
   title={The automorphism group of the non-split Cartan modular curve of level~11},
   journal={J. Algebra},
   volume={417},
   date={2014},
   pages={95--102},
   issn={0021-8693},
   review={\MR{3244639}},
   doi={10.1016/j.jalgebra.2014.05.036},
}

\bib{Edwards}{article}{
   author={Edwards, Johnny},
   title={A complete solution to $X^2+Y^3+Z^5=0$},
   journal={J. Reine Angew. Math.},
   volume={571},
   date={2004},
   pages={213--236},
   issn={0075-4102},
   review={\MR{2070150 (2005e:11035)}},
   doi={10.1515/crll.2004.043},
}

\bib{Fisher2014}{article}{
   author={Fisher, Tom},
   title={On families of 7- and 11-congruent elliptic curves},
   journal={LMS J. Comput. Math.},
   volume={17},
   date={2014},
   number={1},
   pages={536--564},
   issn={1461-1570},
   review={\MR{3356045}},
   doi={10.1112/S1461157014000059},
}

\bib{F33p}{article}{
   author={Freitas, Nuno},
   title={On the Fermat-type equation $x^3+y^3=z^p$},
   journal={Comment. Math. Helv.},
   volume={91},
   date={2016},
   number={2},
   pages={295--304},
   issn={0010-2571},
   review={\MR{3493372}},
   doi={10.4171/CMH/386},

}

\bib{FK}{article}{
   author={Freitas, Nuno},
   author={Kraus, Alain},
   title={An application of the symplectic argument to some Fermat-type equations},
   language={English, with English and French summaries},
   journal={C. R. Math. Acad. Sci. Paris},
   volume={354},
   date={2016},
   number={8},
   pages={751--755},
   issn={1631-073X},
   review={\MR{3528327}},
   doi={10.1016/j.crma.2016.06.002},
}

\bib{FK16}{article}{
   author={Freitas, Nuno},
   author={Kraus, Alain},
   title={On the symplectic type of isomorphisms of the $p$-torsion of elliptic curves},
   journal={Memoirs of the AMS (to appear)},
   date={2019},
   eprint={https://arxiv.org/abs/1607.01218}, 
}

\bib{FS3}{article}{
   author={Freitas, Nuno},
   author={Siksek, Samir},
   title={Fermat's last theorem over some small real quadratic fields},
   journal={Algebra Number Theory},
   volume={9},
   date={2015},
   number={4},
   pages={875--895},
   issn={1937-0652},
   review={\MR{3352822}},
   doi={10.2140/ant.2015.9.875},
}

\bib{Gerardin}{article}{
   author={G\'erardin, Paul},
   title={Facteurs locaux des alg\`ebres simples de rang $4$.~I},
   language={French},
   conference={
      title={Reductive groups and automorphic forms,~I},
      address={Paris},
      date={1976/1977},
   },
   book={
      series={Publ. Math. Univ. Paris VII},
      volume={1},
      publisher={Univ. Paris VII, Paris},
   },
   date={1978},
   pages={37--77},
   review={\MR{680785}},
}

\bib{HK2002}{article}{
   author={Halberstadt, Emmanuel},
   author={Kraus, Alain},
   title={Courbes de Fermat: r\'{e}sultats et probl\`emes},
   language={French, with English summary},
   journal={J. Reine Angew. Math.},
   volume={548},
   date={2002},
   pages={167--234},
   issn={0075-4102},
   review={\MR{1915212}},
   doi={10.1515/crll.2002.058},
}

\bib{Kraus1990}{article}{
   author={Kraus, Alain},
   title={Sur le d\'{e}faut de semi-stabilit\'{e} des courbes elliptiques \`a r\'{e}duction additive},
   language={French, with English summary},
   journal={Manuscripta Math.},
   volume={69},
   date={1990},
   number={4},
   pages={353--385},
   issn={0025-2611},
   review={\MR{1080288}},
   doi={10.1007/BF02567933},
}

\bib{KO}{article}{
   author={Kraus, A.},
   author={Oesterl\'e, J.},
   title={Sur une question de B. Mazur},
   language={French},
   journal={Math. Ann.},
   volume={293},
   date={1992},
   number={2},
   pages={259--275},
   issn={0025-5831},
   review={\MR{1166121}},
   doi={10.1007/BF01444715},
}

\bib{Ligozat}{article}{
   author={Ligozat, G{\'e}rard},
   title={Courbes modulaires de niveau $11$},
   language={French},
   conference={
      title={Modular functions of one variable, V},
      address={Proc. Second Internat. Conf., Univ. Bonn, Bonn},
      date={1976},
   },
   book={
      publisher={Springer, Berlin},
   },
   date={1977},
   pages={149--237. Lecture Notes in Math., Vol. 601},
   review={\MR{0463118}},
}

\bib{LMFDB}{misc}{
  label={LMFDB},
  author={The {LMFDB Collaboration}},
  title={The L-functions and Modular Forms Database},
  note={\url{http://www.lmfdb.org}},
  year={2017},
}

\bib{LW2012}{article}{
   author={Loeffler, David},
   author={Weinstein, Jared},
   title={On the computation of local components of a newform},
   journal={Math. Comp.},
   volume={81},
   date={2012},
   number={278},
   pages={1179--1200},
   issn={0025-5718},
   review={\MR{2869056}},
   doi={10.1090/S0025-5718-2011-02530-5},
}

\bib{LW2015}{article}{
   author={Loeffler, David},
   author={Weinstein, Jared},
   title={Erratum: ``On the computation of local components of a newform'' [MR2869056]},
   journal={Math. Comp.},
   volume={84},
   date={2015},
   number={291},
   pages={355--356},
   issn={0025-5718},
   review={\MR{3266964}},
   doi={10.1090/S0025-5718-2014-02867-6},
}

\bib{Mazur1978}{article}{
   author={Mazur, B.},
   title={Rational isogenies of prime degree (with an appendix by D.~Goldfeld)},
   journal={Invent. Math.},
   volume={44},
   date={1978},
   number={2},
   pages={129--162},
   issn={0020-9910},
   review={\MR{482230 (80h:14022)}},
   doi={10.1007/BF01390348},
}

\bib{Pacetti}{article}{
   author={Pacetti, Ariel},
   title={On the change of root numbers under twisting and applications},
   journal={Proc. Amer. Math. Soc.},
   volume={141},
   date={2013},
   number={8},
   pages={2615--2628},
   issn={0002-9939},
   review={\MR{3056552}},
   doi={10.1090/S0002-9939-2013-11532-7},}

\bib{PSS2007}{article}{
   author={Poonen, Bjorn},
   author={Schaefer, Edward F.},
   author={Stoll, Michael},
   title={Twists of $X(7)$ and primitive solutions to $x^2+y^3=z^7$},
   journal={Duke Math. J.},
   volume={137},
   date={2007},
   number={1},
   pages={103--158},
   issn={0012-7094},
   review={\MR{2309145 (2008i:11085)}},
   doi={10.1215/S0012-7094-07-13714-1},
}

\bib{Rohrlich}{article}{
   author={Rohrlich, David E.},
   title={Elliptic curves and the Weil-Deligne group},
   conference={
      title={Elliptic curves and related topics},
   },
   book={
      series={CRM Proc. Lecture Notes},
      volume={4},
      publisher={Amer. Math. Soc., Providence, RI},
   },
   date={1994},
   pages={125--157},
   review={\MR{1260960}},
}

\bib{ST1968}{article}{
   author={Serre, Jean-Pierre},
   author={Tate, John},
   title={Good reduction of abelian varieties},
   journal={Ann. of Math. (2)},
   volume={88},
   date={1968},
   pages={492--517},
   issn={0003-486X},
   review={\MR{0236190}},
   doi={10.2307/1970722},
}

\bib{Shimura}{book}{
   author={Shimura, Goro},
   title={Introduction to the arithmetic theory of automorphic functions},
   series={Publications of the Mathematical Society of Japan},
   volume={11},
   note={Reprint of the 1971 original;
   Kan\^o Memorial Lectures, 1},
   publisher={Princeton University Press, Princeton, NJ},
   date={1994},
   pages={xiv+271},
   isbn={0-691-08092-5},
   review={\MR{1291394}},}

\bib{Siksek2013}{article}{
   author={Siksek, Samir},
   title={Explicit Chabauty over number fields},
   journal={Algebra Number Theory},
   volume={7},
   date={2013},
   number={4},
   pages={765--793},
   issn={1937-0652},
   review={\MR{3095226}},
   doi={10.2140/ant.2013.7.765},
}

\bib{SiksekStoll2014}{article}{
   author={Siksek, Samir},
   author={Stoll, Michael},
   title={The generalised Fermat equation $x^2+y^3=z^{15}$},
   journal={Arch. Math. (Basel)},
   volume={102},
   date={2014},
   number={5},
   pages={411--421},
   issn={0003-889X},
   review={\MR{3254783}},
   doi={10.1007/s00013-014-0639-z},
}

\bib{SilvermanI}{book}{
   author={Silverman, Joseph H.},
   title={The arithmetic of elliptic curves},
   series={Graduate Texts in Mathematics},
   volume={106},
   edition={2},
   publisher={Springer, Dordrecht},
   date={2009},
   pages={xx+513},
   isbn={978-0-387-09493-9},
   review={\MR{2514094}},
   doi={10.1007/978-0-387-09494-6},
}

\bib{Stoll2001}{article}{
   author={Stoll, Michael},
   title={Implementing 2-descent for Jacobians of hyperelliptic curves},
   journal={Acta Arith.},
   volume={98},
   date={2001},
   number={3},
   pages={245--277},
   issn={0065-1036},
   review={\MR{1829626 (2002b:11089)}},
   doi={10.4064/aa98-3-4},
}

\bib{Stoll2017b}{article}{
   author={Stoll, Michael},
   title={Chabauty without the Mordell-Weil group},
   conference={
      title={Algorithmic and experimental methods in algebra, geometry, and number theory},
   },
   book={
      publisher={Springer, Cham},
   },
   date={2017},
   pages={623--663},
   review={\MR{3792746}},
}

\bib{programs}{misc}{
  author={Stoll, Michael},
  title={Magma scripts for the computations},
  note={\newline \url{http://www.mathe2.uni-bayreuth.de/stoll/magma/index.html\#GenFermat}},
}

\bib{Weil1967}{article}{
   author={Weil, Andr{\'e}},
   title={\"Uber die Bestimmung Dirichletscher Reihen durch
   Funktionalgleichungen},
   language={German},
   journal={Math. Ann.},
   volume={168},
   date={1967},
   pages={149--156},
   issn={0025-5831},
   review={\MR{0207658 (34 \#7473)}},
}

\end{biblist}
\end{bibdiv}

\end{document}